\newcommand{\pushright}[1]{\ifmeasuring@#1\else\omit\hfill$\displaystyle#1$\fi\ignorespaces}
\newcommand{\pushleft}[1]{\ifmeasuring@#1\else\omit$\displaystyle#1$\hfill\fi\ignorespaces}
\newcommand{\bbZ}{\mathbb{Z}}
\newcommand{\bbR}{\mathbb{R}}
\newcommand{\R}{\mathbb{R}}
\newcommand{\Ztwo}{\mathbb{Z}^2}
\newcommand{\betac}{\beta_{\mathrm{\scriptscriptstyle c}}}
\renewcommand{\norm}[1]{\|#1\|}
\newcommand{\setof}[2]{\{#1\,:\,#2\}}
\newcommand{\bsetof}[2]{\bigl\{#1\,:\,#2\bigr\}}
\newcommand{\given}{\,|\,}
\newcommand{\bgiven}{\bigm\vert}
\newcommand{\PottsZ}{\mathcal{Z}}
\newcommand{\eone}{\vec e_1}
\newcommand{\Z}{\mathbb{Z}}
\newcommand{\IF}[1]{\mathds{1}_{\{#1\}}}
\renewcommand{\emptyset}{\varnothing}
\newcommand{\calS}{\mathcal{S}}
\newcommand{\HausdorffDist}{d_{\mathrm{H}}}
\newcommand{\excursion}{\mathfrak{e}}
\newcommand{\iid}{i.i.d.\xspace}
\newcommand{\fcone}{\mathcal{Y}^\blacktriangleleft}
\newcommand{\bcone}{\mathcal{Y}^\blacktriangleright}
\newcommand{\bend}{\mathbf{b}}
\newcommand{\fend}{\mathbf{f}}
\newcommand{\diam}{D}
\newcommand{\concatenate}{\circ}
\newcommand{\displace}{X}
\newcommand{\walk}{\mathsf{S}}
\newcommand{\bfp}{\mathbf{p}}
\newcommand{\SetRootMarkBackCont}{\mathfrak{B}_L}
\newcommand{\SetRootMarkForwCont}{\mathfrak{B}_R}
\newcommand{\SetRootDiaCont}{\mathfrak{A}}
\newcommand{\CPts}{\textnormal{CPts}}
\newcommand{\PottsLaw}{\mu}
\newcommand{\FKlaw}{\Phi}
\newcommand{\FKZ}{Z}
\newcommand{\open}{\textnormal{\xspace open}}
\newcommand{\close}{\textnormal{\xspace closed}}
\newcommand{\babs}[1]{\bigl\lvert #1 \bigr\rvert}
\newcommand{\Babs}[1]{\Bigl\lvert #1 \Bigr\rvert}
\newcommand{\coupling}{\Psi}
\newcommand{\Cov}{\mathrm{Cov}}
\newcommand{\ouvert}{\mathrm{o}}
\newcommand{\Piv}{\mathrm{Piv}}
\newcommand{\EdgeBnd}{\partial_{\scriptscriptstyle\rm edge}}
\theoremstyle{plain}
\newtheorem{theorem}{Theorem}[section]
\newtheorem{lemma}[theorem]{Lemma}
\newtheorem{proposition}[theorem]{Proposition}
\newtheorem{corollary}[theorem]{Corollary}
\newtheorem{definition}{Definition}[section]
\newtheorem{remark}{Remark}[section]
\newtheorem{claim}{Claim}
\theoremstyle{definition}
\newtheorem{obs}{Observation}
\author{Dmitry Ioffe}
\address{Faculty of IE\&M, Technion, Haifa 32000, Israel}
\email{ieioffe@ie.technion.ac.il}
\author{S\'{e}bastien Ott}
\address{Dipartimento di Matematica e Fisica, Università degli Studi Roma Tre, 00146 Roma, Italy}
\email{ott.sebast@gmail.com}
\author{Yvan Velenik}
\address{Section de Mathématiques, Université de Genève, CH-1211 Genève, Switzerland}
\email{yvan.velenik@unige.ch}
\author{Vitali Wachtel}
\address{Institut f\"ur Mathematik, Universit\"at Augsburg, D-86135 Augsburg, 
    Germany}
\email{vitali.wachtel@math.uni-augsburg.de}
\newcommand{\calL}{\mathcal{L}}
\newcommand{\fre}{\mathfrak{e}}
\newcommand{\frz}{\mathfrak{z}}
\newcommand{\frA}{\mathfrak{A}}
\newcommand{\frB}{\mathfrak{B}}
\newcommand{\frI}{\mathfrak{I}}
\newcommand{\frT}{\mathfrak{T}}
\newcommand{\bbE}{\mathbb{E}}
\newcommand{\bbH}{\mathbb{H}}
\newcommand{\bbN}{\mathbb{N}}
\newcommand{\bfE}{\mathbf{E}}
\newcommand{\bfG}{\mathbf{G}}
\newcommand{\bfK}{\mathbf{K}}
\newcommand{\bfQ}{\mathbf{Q}}
\newcommand{\bfZ}{\mathbf{Z}}
\newcommand{\sfa}{\mathsf a}
\newcommand{\sfb}{\mathsf b}
\newcommand{\sfe}{\mathsf e}
\newcommand{\sfo}{{\mathsf o}}
\newcommand{\sfp}{{\mathsf p}}
\newcommand{\sfu}{{\mathsf u}}
\newcommand{\sfv}{{\mathsf v}}
\newcommand{\sfw}{{\mathsf w}}
\newcommand{\sfx}{{\mathsf x}}
\newcommand{\sfy}{{\mathsf y}}
\newcommand{\sfz}{{\mathsf z}}
\newcommand{\sfE}{\mathsf{E}}
\newcommand{\sfH}{\mathsf{H}}
\newcommand{\sfO}{\mathsf{O}}
\newcommand{\sfP}{\mathsf{P}}
\newcommand{\sfS}{\mathsf{S}}
\newcommand{\sfT}{\mathsf{T}}
\newcommand{\sfZ}{\mathsf{Z}}
\newcommand{\ueta}{\underline{\eta}}
\newcommand{\ulambda}{\underline{\lambda}}
\newcommand{\defby}{=}
\newcommand{\lb}{\left(}
\newcommand{\rb}{\right)}
\newcommand{\lbr}{\left\{}
\newcommand{\rbr}{\right\}}
\newcommand{\be}[1]{\begin{equation}\label{#1}}
\newcommand{\ee}{\end{equation}}
\date{\today}
\title[Invariance principle for a Potts interface along a wall]{Invariance principle for\\a Potts interface along a wall}
\begin{document}

\maketitle	
\begin{abstract}
We consider nearest-neighbour two-dimensional Potts models, with boundary conditions leading to the presence of an interface along the bottom wall of the box. We show that, after a suitable diffusive scaling, the interface weakly converges to the standard Brownian excursion.
\end{abstract}


\section{Introduction and results}

The rigorous understanding of the statistical properties of interfaces in two-dimensional spin systems has raised considerable interest for nearly 50 years.

Early results mostly dealt with the very-low temperature Ising model. The first rigorous result indicating diffusive behavior for the interface in this model was obtained by Gallavotti in 1972~\cite{Gallavotti-1972}. It was shown in this paper that, at sufficiently low temperature, the interface in a box of linear size \(n\) has fluctuations of order \(\sqrt{n}\). A description of the internal structure of the interface (in particular the fact that the interface has a bounded intrinsic width, in spite of its unbounded fluctuations) was provided in~\cite{Bricmont+Lebowitz+Pfister-1981}, while a full invariance principle toward a Brownian bridge was proved in~\cite{Higuchi-1979}. These works were completed by a number of (nonperturbative) exact results in which the profile of expected magnetization was derived in the presence of an interface, see for instance~\cite{Abraham+Reed-1974}. Extensions of such low-temperature results to other two-dimensional models have been obtained, although a complete theory is still lacking.

The absence of tools to undertake a nonperturbative analysis led to the analysis of similar problems in simpler ``effective'' settings; see, for instance, \cite{Durrett-1979}.

Nevertheless, during the last 20 years, a lot of progress has been made toward extending such results to all temperatures below critical. In particular, a detailed description of the microscopic structure of the interface as well as a proof of an invariance principle were provided in~\cite{Campanino+Ioffe+Velenik-2003,Greenberg+Ioffe-2005} for the Ising model and~\cite{Campanino+Ioffe+Velenik-2008} for the Potts model.

All the above results were concerned with an interface ``in the bulk'' (that is, an interface crossing an ``infinite strip''). For a long time, the understanding of the corresponding properties for an interface located along one of the system's boundaries remained much more elusive, even in perturbative regimes. The difficulty is that one has to understand how the interface interacts with the boundary and, in particular, exclude pinning of the interface by the wall. It turns out that a rigorous understanding of such issues requires a surprisingly careful analysis. This was undertaken, in a perturbative regime, by Ioffe, Shlosman and Toninelli in~\cite{Ioffe+Shlosman+Toninelli-2015}. Although restricted to Ising-type interface, the approach they develop is in principle of a rather general nature.

In~\cite{Dobrushin-1992}, Dobrushin states convergence of a properly rescaled Ising interface above a wall towards the standard Brownian excursion, for sufficiently low temperatures.
The proof is briefly sketched with a reference to the fundamental low-temperature techniques developed in~\cite{Dobrushin+Kotecky+Shlosman-1992}. 
It is not entirely clear whether a complete rigorous implementation along these lines would indeed follow from 
the results in~\cite[Chapter~4]{Dobrushin+Kotecky+Shlosman-1992} alone (with the simple correction presented in the Appendix of~\cite{Ioffe+Shlosman+Toninelli-2015}) or whether it would require the full power of~\cite{Ioffe+Shlosman+Toninelli-2015} in order to control the competition between the entropic repulsion and the interaction between  the interface and the wall. 

In the present paper, we prove that such an interface, after suitable diffusive scaling, converges to a Brownian excursion, for all temperatures below \(T_c\) and arbitrary \(q\)-state Potts models. We bypass a detailed analysis of the interaction between the interface and the wall by combining monotonicity and mixing properties of these models. 
{Lemma~\ref{lem:wall_to_energy_cost},  which should be considered as one of the main technical, and perhaps conceptual, contributions of this paper,  implies that in the case of nearest neighbor Potts models on $\bbZ^2$, entropic repulsion of the interface from the wall wins over a  possible attraction of the interface by the wall for all temperatures below critical. This result has important ramifications, for instance it plays a crucial role for proving convergence to Ferrari--Spohn diffusions of low-temperature Ising interfaces in the critical prewetting regime \cite{IOSV}}, or for studying low-temperature 2D Ising metastable states related to the phenomenon of uphill diffusions \cite{DIMP}.

\begin{figure}[t]
	\begin{minipage}{.65\textwidth}
		\centering
		\includegraphics[width=.95\textwidth]{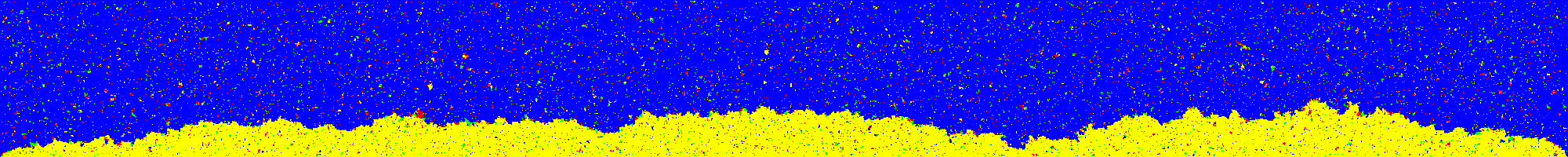}  \\[1cm]
		\includegraphics[width=.95\textwidth]{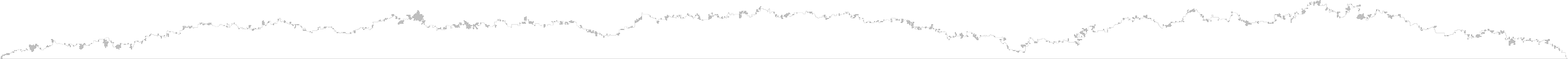}
	\end{minipage}
	\begin{minipage}{.31\textwidth}
	\centering
	\includegraphics[width=.95\textwidth]{}
	\end{minipage}
	\caption{Top left: typical Potts configuration. Bottom left: the corresponding interface. Right: The interface after diffusive scaling.}
\end{figure}

\subsection{Notations and Conventions}

We denote \(\Z_+=\{0,1,2,\dots\}\) the non-negative integers. 
{\( C , C_1, \dots , c,c_1 , \dots\)}  will denote non-negative constants whose value can change from line to line and that do not depend on the parameters under investigation.

Denote $\Z^2=(V_{\Z^2}\equiv V,E_{\Z^2}\equiv E)$ the graph with vertices \(\bsetof{i=(i_1,i_2)\in\R^2}{i_1,i_2\in\Z}\) and edges between any two vertices \(i,j\) at Euclidean distance $1$, which we denote by \(i\sim j\).
The dual graph $(\Z^2)^*=(V^*,E^*)$ has set of vertices {$V+(1/2,1/2)$ and edges between any two vertices at distance $1$}.
There is a natural bijection between $E$ and $E^*$, mapping the edge \(e=\{i,j\}\in E\) to the unique edge \(e^*=\{i,j\}^*\in E^*\) intersecting it; we then say that \(e\) and \(e^*\) are dual to each other.

It will be convenient to see a set $C\subset E$ both as a set of edges and as the subset of $\R^2$ given by the union of the closed line segments defined by the edges. We will say that a vertex belongs to $C$ if it is an endpoint of at least one edge of \(C\).  We denote by $\EdgeBnd C$ the set of edges in $\Z^2\setminus C$ having at least one endpoint in $C$.
Those conventions are adapted in a straightforward fashion to $C\subset E^*$.

We will say that two vertices \(u,v\) are \emph{connected} in a graph if there exists a path of edges linking them. We denote this property \(u\leftrightarrow v\).

\subsection{Potts and Random-Cluster Model, Duality}

Let \(q\geq 2\) be an integer, \(\beta\geq 0\), \(G=(V_G,E_G)\) be a graph, \(F=(V_F,E_F)\subset G\) be finite and {\(\alpha\in\{1,\dots,q\}^{V_G}\)}. The \(q\)-state Potts model on \(F\) at inverse temperature \(\beta\) with {boundary condition \(\alpha\) 
 is the probability measure \(\PottsLaw_{\beta,q,F}^{\alpha}\) on \(\{1,\dots,q\}^{V_F}\) defined by
\begin{equation*}
	\PottsLaw_{\beta,q,F}^{\alpha}(\sigma) = \frac{1}{\PottsZ_{\beta,q,F}^{\alpha}} \exp\Bigl(\beta\sum_{\{i,j\}\in E_F} \IF{\sigma_i=\sigma_j} + \beta\sum_{\substack{\{i,j\}\in E_G\\i\in V_F, j\notin V_F}} \IF{\sigma_i=\alpha_j} \Bigr),
\end{equation*}
where \(\PottsZ_{\beta,q,F}^{\alpha}\) is the normalizing constant.

Let \(\beta, G, F\) be as before and \(q\geq 1\) be real. Let \(\eta\in\{0,1\}^{E_G}\). The random-cluster measure on \(F\) with edge weight \(e^{\beta}-1\), cluster weight \(q\) and boundary condition \(\eta\) is the probability measure on \(\{0,1\}^{E_F}\) (identified with the subsets of \(E_F\)) given by
\begin{equation*}
\FKlaw_{\beta,q,F}^{\eta}(\omega) = \frac{1}{\FKZ_{\beta,q,F}^{\eta}} (e^{\beta}-1)^{|\omega|} q^{\kappa_{\eta}(\omega)},
\end{equation*}where \(\kappa_{\eta}(\omega)\) is the number of connected components (clusters) intersecting \(V_F\) in the graph obtained by taking the graph with vertex set \(V_G\) and edge set \((\eta\setminus E_F) \cup \omega\). When omitted from the notation, \(\eta\) is assumed to be identically \(0\) (free boundary conditions).
If the graph \(G\) is taken to be \(\Z^2\), one can define the random-cluster measure dual to \(\FKlaw_{\beta,q,F}^{\eta}\) using the bijection from \(\{0,1\}^{E}\) to \(\{0,1\}^{E^*}\) induced by \(\omega^*_{e^*} = 1-\omega_e\). The dual measure is then \(\FKlaw_{\beta^*,q,F^*}^{\eta^*}\) where \(\beta^*\) is defined via
\begin{equation}
\label{eq:bbstar}
	(e^{\beta}-1)(e^{\beta^*}-1) = q.
\end{equation}
If \(\omega\sim\FKlaw_{\beta,q,F}^{\eta}\), then \(\omega^*\sim \FKlaw_{\beta^*,q,F^*}^{\eta^*}\) (see \cite{Grimmett-2006}).

As the transition temperature of the Potts model on \(\Z^2\) is given by \(\betac = \log(1+\sqrt{q})\) (the self dual point in the sense of~\eqref{eq:bbstar},  see~\cite{Beffara+Duminil-Copin-2012}), one has that \(\beta>\betac\implies \beta^*<\betac\) and vice versa. Moreover, the transition is sharp: for all \(q\geq 1\) and \(\beta<\betac(q)\), there exist \(C,c>0\) such that \(\FKlaw^1_{\beta,q,B_n}(0 \leftrightarrow \partial B_n) \leq Ce^{-cn}\) for all \(n\geq 1\), where \(B_n = \{-n,\dots,n\}^2\).

One main advantage of the random-cluster model is that it satisfies the FKG lattice condition.
The following classical notion will be important for us.
An edge \(e\) is said to be \emph{pivotal} for the event \(A\) in the configuration \(\omega\) if \(\mathds{1}_{A} (\omega) + \mathds{1}_{A} (\omega^\prime ) =1\), 
where the configuration \(\omega'\) is given by \(\omega'_f = \omega_f\) for all \(f\neq e\) and \(\omega'_e = 1-\omega_e\). We denote by \(\Piv_\omega(A)\) the set of all edges that are pivotal for \(A\) in \(\omega\). When averaging over \(\omega\) under some probability measure, we will often simply write \(\Piv(A)\) for the corresponding set of edges.

\subsection{Edwards--Sokal Coupling for Interfaces.}

We are interested in the behavior of the interface between a pure phase occupying the bulk of the system and a second pure phase located along the boundary. It will be convenient to define the Potts model on \((\Z^2)^*\). Denote \(\Lambda_+^*\equiv\Lambda_+^*(N)=\bigl([-N+1/2,N-1/2]\times[-1/2,N-1/2]\bigr)\cap(\Z^2)^*\). We consider the Potts model on \(\Lambda_+^*\) with boundary condition
\begin{equation*}
	\alpha^{\pm}_i= \begin{cases}
	1 & \text{if } i_2<0,\\
	2 & \text{if } i_2>0.
	\end{cases}
\end{equation*}
\(\PottsLaw_{\beta^{*},q, \Lambda_{+}^{*} }^{\alpha^{\pm} } \) is related to the random-cluster model via the Edwards--Sokal coupling: from a configuration \(\sigma\in\{1,\dots,q\}^{\Lambda_{+}^{*}}\), one obtains a configuration \(\omega^{*}\) on \(E^{*}\) by setting 
(here \(e^{*}=\{i,j\}\in E^{*}\) and intersections are between sets of vertices)
\begin{itemize}
	\item \(\omega^{*}_{e^{*}}=1\) if \(\{i,j\}\cap \Lambda_+^* = \varnothing \),
	\item \(\omega^{*}_{e^{*}}=0\) if \(\{i,j\}\subset \Lambda_+^{*} \) and \(\sigma_i\neq \sigma_j\),
	\item \(\omega^{*}_{e^{*}}=0\) if \(\{i,j\}\cap \Lambda_+^{*}=\{i\} \) and \(\sigma_i\neq \alpha_j\),
	\item \(\omega^{*}_{ e^{*} }=\xi_{ e^{*} }\) in the other cases, where \((\xi_{ e^{*} } )_{ e^{*}\in E^{*} }\) is a family of i.i.d.\ Bernoulli random variables of parameter \(1-e^{-\beta}\).
\end{itemize}
Define then \(\omega\in\{0,1\}^{E}\) from \(\omega^*\) by \(\omega_e=1-\omega^*_{e^*}\). One has \(\omega\sim \FKlaw_{\beta,q,\Lambda_+}(\,\cdot \given v_L\leftrightarrow v_R)\) where \(\Lambda_+ = \{-N,\dots,N\}\times\{0,\dots,N\}\) and \(v_L=(-N,0), v_R=(N,0)\). We will also denote \(\Lambda_-=\{-N,\dots,N\}\times\{-1,\dots,-N\}\) .
\begin{figure}[h]
	\centering
	\includegraphics[scale=0.8]{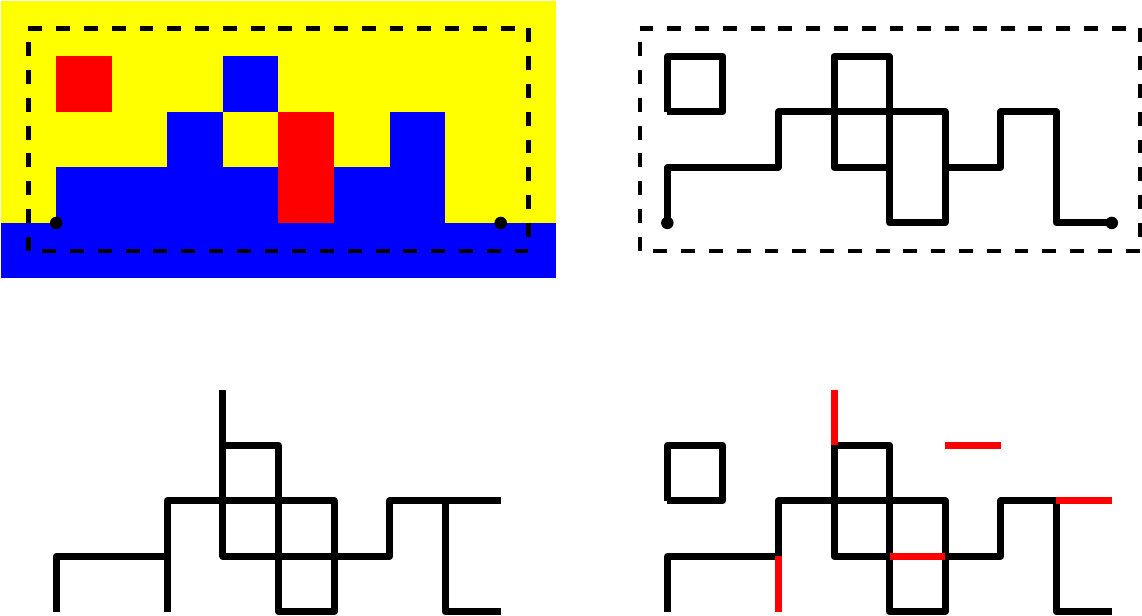}
	\caption{From top left to bottom left in clockwise order: a Potts interface on the dual box; its Peierls contours; the random cluster configuration obtained from it by independently opening nonfrozen edges with probabiity \(1-e^{-\beta}\); the cluster we will study.}
	\label{fig:InterfaceToCluster}
\end{figure}

\begin{remark}
	The way we constructed \(\omega\) implies that the Peierls contours between different colors in the Potts configuration are included in \(\omega\). Thus, any reasonable notion of the interface between \(1\) and \(2\) induced by the boundary condition is a subset of the common cluster of \(v_L\) and \(v_R\) in \(\omega\).
\end{remark}

From now on, we will often omit \(q\) from the notation (it will be supposed integer and \(\geq 2\) when talking about the Potts model and its coupling with the random-cluster model and supposed real and \(\geq 1\) when talking about the random-cluster model alone). We will also systematically take \(\beta^*>\betac(q)>\beta\) and denote by \(\FKlaw\) the (unique) infinite-volume measure. {To lighten notations, we will drop the \(\beta\)-dependency in the proofs (Sections~\ref{sec:OZinfVol_Consequences},~\ref{sec:Entropic_repulsion} and~\ref{sec:Effective_HS_RW}).}

\subsection{Surface Tension and Wulff Shape}
\label{sub:st-Wsh}
For a direction \(s\in \mathbb{S}^1\), define the configuration \({\alpha}^s\in\{1,2\}^{V^*}\) (remember that \(V^*\) is the set of vertices of the graph \((\Ztwo)^*\)) by
\begin{equation*}
	{\alpha}^s_i = \begin{cases}
	1 & \text{ if } i\cdot s>0\\
	2 & \text{else}
	\end{cases},
\end{equation*} where \(\cdot\) denotes the scalar product. The \emph{surface tension} in the direction \(s\) at inverse temperature \(\beta^*\) is defined as
\begin{equation*}
	\tau_{\beta^*}(s) = -\lim_{N\to\infty} \frac{1}{l_s(N)} \log(\frac{\PottsZ_{\beta^*,\Lambda_N^*}^{{\alpha}^s}}{\PottsZ_{\beta^*,\Lambda_N^*}^{1}}),
\end{equation*}
where \(\Lambda_N^*=\bigl([-N+1/2,N-1/2]\times[-N+1/2,N-1/2]\bigr)\cap(\Z^2)^*\) and \(l_s(N)\) is the length of the line segment determined by the intersection of the straight line through \(0\) with normal \(s\) and the set \([-N,N]^2\).
It is known that \(\tau_{\beta^*}(s)>0\) for all \(s\) and all \(\beta^*>\betac(q)\)~\cite{Duminil-Copin+Manolescu-2016}. In fact, the surface tension can be defined for a rather large class of models in arbitrary dimensions~\cite{miracle1995surface} and its homogeneous of order one extension is convex and, therefore, can be represented as the support function of the so-called equilibrium crystal (Wulff) shape ${\mathbf K}_{\beta^*}$. In two dimensions, the boundary $\partial {\mathbf K}_{\beta^*}$ is analytic and has a uniformly positive curvature~\cite{Campanino+Ioffe+Velenik-2008} at all sub-critical temperatures $\beta^* > \beta_c$. The inverse transition temperature {\(\betac = \betac (q) = \log(1+\sqrt{q})\)} can thus be {characterized}  as
\[
\betac(q) = \inf\setof{\beta^*\geq 0}{\tau_{\beta^*}>0}.
\]
Set \(\tau = \tau_{\beta^*}\lb{\eone} \rb \) to be the surface tension in the horizontal axis direction $\eone = (1,0)$.
In the sequel, we shall use $\chi = \chi_{\beta^*}$ to denote the curvature of ${\mathbf K}_{\beta^*}$ at its rightmost point $\tau{\eone}  \in \partial{\mathbf K}_{\beta^*}$.

A direct consequence of the correspondence between the Potts model on \((\Z^2)^*\) at inverse temperature \(\beta^*\) and the random-cluster model on \(\Z^2\) at inverse temperature \(\beta\) is that
\begin{equation*}
\tau = -\lim_{N\to\infty} \frac{1}{2N+1} \log\FKlaw_{\beta,\Lambda_N}(v_L\leftrightarrow v_R) = -\lim_{N\to\infty} \frac{1}{N} \log\FKlaw_{\beta}\big(0\leftrightarrow (N,0)\big),
\end{equation*}
where \(\FKlaw_{\beta}\) is the random-cluster distribution on \(\Ztwo\) obtained as the limit of the finite-volume measures on square boxes with \(0\) boundary condition.

\subsection{Results}

We will denote \(\Gamma= C_{v_L,v_R}\) the joint cluster of \(v_L,v_R\) under \(\FKlaw_{\beta,\Lambda_+}(\,\cdot \given v_L\leftrightarrow v_R)\). We also define the upper and lower vertex boundary of \(\Gamma\): 
\[
\Gamma^+_k=\max\{j:(k,j)\in\Gamma\} \ {\rm and}\   \Gamma^-_k=\min\{j:(k,j)\in\Gamma\}\  {\rm for}\  k=-N,\dots,N .
\]
We will see \(\Gamma^{+}\) and \(\Gamma^{-}\) as integer-valued random functions on $\{-N, \dots, N\}$. 

\begin{figure}[h]
	\centering
	\includegraphics[scale=0.8]{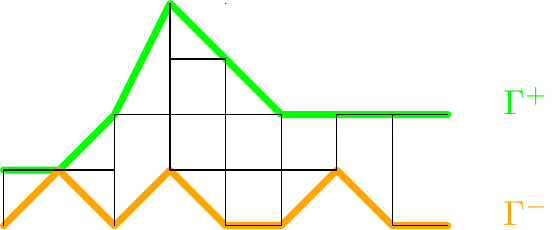}
	\caption{The cluster of Figure~\ref{fig:InterfaceToCluster} and the graphs of the (linear interpolation of the) two associated vertex boundaries \(\Gamma^+\) and \(\Gamma^-\).}
\end{figure}
 
\subsection{Scaling limit of the interface}

Let, for \(t\in [0,1]\),
\begin{equation}\label{eq:GammaScaled}
\hat\Gamma^+(t) = \frac1{\sqrt N}{\Gamma^+_{-N+\lfloor 2 Nt \rfloor} },\qquad
\hat\Gamma^-(t) = \frac1{\sqrt N}{\Gamma^-_{-N+\lfloor 2 N t \rfloor}}.
\end{equation}
We are now ready to state the main result of this work.
\begin{theorem}\label{thm:Main}
	Fix \({\beta <\betac(q)}\). Then, for any \(\epsilon>0\),
	\begin{equation}\label{eq:Gam-pm}
	\lim_{N\to\infty} \FKlaw_{\beta,\Lambda_+}\bigl(\sup_{t\in[0,1]} \babs{ \hat\Gamma^+(t) - \hat\Gamma^-(t) } > \epsilon \bgiven v_L\leftrightarrow v_R \bigr) = 0 . 
	\end{equation}
	Furthermore, under the family of measures \(\{\FKlaw_{\beta,\Lambda_+}(\,\cdot \given v_L\leftrightarrow v_R)\}\), the following weak convergence result holds as $N\to\infty$:
	\begin{equation}\label{eq:wGamma}
	\hat\Gamma^+ \Rightarrow \sqrt{\chi}\excursion,
	\end{equation}
	where \(\mathfrak{e}:[0,1]\to\bbR\) is the normalized Brownian excursion and, as before, $\chi=\chi(\beta,q)$ is the curvature of the equilibrium crystal shape $\partial{\mathbf K}_{\beta^*}$ in the horizontal direction.
\end{theorem}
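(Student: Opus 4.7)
The plan is to reduce the interface to an effective one-dimensional random walk, show that the wall acts only to enforce positivity (not pinning), and then invoke a classical invariance principle for random-walk bridges conditioned to stay positive. The starting point is the subcritical Ornstein--Zernike (OZ) theory for random-cluster models developed in~\cite{Campanino+Ioffe+Velenik-2008}: for \(\beta<\betac(q)\), conditionally on \(\{v_L\leftrightarrow v_R\}\) in the full plane, the cluster \(\Gamma\) decomposes along a sequence of \emph{cone points} \(v_L=X_0,X_1,\dots,X_M=v_R\) into i.i.d.\ ``irreducible animals'' with exponential tails on their diameter (after an exponential tilt by \(e^{\tau\cdot\Delta}\)). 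The cone-point increments \(\Delta X_k=X_k-X_{k-1}\) form i.i.d.\ vectors with exponential tails and mean parallel to \(\eone\); the vertical component of the walk \(\sum_k\Delta X_k\) has asymptotic variance \(\chi\), via the standard identification of the OZ variance with the Wulff curvature at \(\tau\eone\).

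\textbf{Transferring from full space to half space.} The main new point is the wall at height zero. I would combine three ingredients: (i) Lemma~\ref{lem:wall_to_energy_cost}, advertised in the introduction, which captures that entropic repulsion strictly dominates any attraction of the interface by the wall for nearest-neighbour Potts models on \(\Z^2\) at all subcritical temperatures; (ii) FKG monotonicity of the random-cluster model, used to sandwich the wall-conditional measure between bulk-type measures; and (iii) exponential mixing at \(\beta<\betac\), used to resample the cluster locally away from and near the wall. The upshot is that the OZ cone-point skeleton built for the bulk can be transported to the half-space conditional measure, and the constraint \(\Gamma\subset\Lambda_+\) translates, in the random-walk language, into the single geometric constraint that the vertical coordinates of the skeleton stay strictly positive for \(1\le k<M\), modulo boundary corrections of negligible probability on the diffusive scale.

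\textbf{Invariance principle and width control.} Given the effective-walk representation subject only to a positivity conditioning, the rescaled skeleton converges to a Brownian excursion with diffusion constant \(\chi\) by the Iglehart--Bolthausen invariance principle for random-walk bridges conditioned to be positive (or one of its more general finite-variance versions); this yields~\eqref{eq:wGamma}. The closeness statement~\eqref{eq:Gam-pm} follows from the exponential tails of the animal diameters: the vertical range of each animal is \(O(\log N)\) uniformly in \(N\) with overwhelming probability, so \(\hat\Gamma^+\) and \(\hat\Gamma^-\) both lie within \(N^{-1/2}\log N\) of the piecewise-affine interpolation of the scaled skeleton, uniformly in \(t\). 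The principal obstacle is the transfer step: implementing the cone-point surgery in the presence of the wall requires careful mixing and finite-energy arguments, and in particular one must rule out configurations in which a macroscopic stretch of interface sticks to the wall---this is precisely what Lemma~\ref{lem:wall_to_energy_cost} forbids, and is the conceptual heart of the proof.
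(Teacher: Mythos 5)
Your overall architecture matches the paper's: decompose the cluster into cone-point irreducible pieces via Ornstein--Zernike theory, use Lemma~\ref{lem:wall_to_energy_cost} combined with FKG and mixing to show the wall cannot pin, reduce to an effective random walk conditioned to stay positive, and then apply an invariance principle. You correctly identify Lemma~\ref{lem:wall_to_energy_cost} as the conceptual heart. However, there are two places where your sketch underestimates what actually has to be done.

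First, the transfer from ``wall cannot pin'' to ``the skeleton just has to stay positive'' is not a single step. The paper first proves a quantitative repulsion estimate (Lemma~\ref{lem:EntropicRepulsion}: the cluster avoids the rectangle $\Delta$ of height $N^{\epsilon}$ away from the two ends), and this is itself a bootstrap. It requires a rough upper bound on half-space point-to-point connection probabilities (Lemma~\ref{lem:HalfSpace_Connexions_UB}, whose proof needs the soft-core random-walk estimate Theorem~\ref{thm:eb-walk}), a rough lower bound (Lemma~\ref{lem:NonSharpLB}), and a BK-type inequality (Lemma~\ref{lem:BK_like_inequality}). Only after this repulsion is in place can one regularize the measure, decouple the middle portion from the boundary (Proposition~\ref{prop:PhiPsi}), and couple to an effective walk (Theorem~\ref{thm:OZ_main-Psi}). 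Simply saying that the constraint ``translates into positivity'' elides the entire reason Section~\ref{sec:Entropic_repulsion} exists.

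Second, and more substantively, you invoke the Iglehart--Bolthausen invariance principle, but that result does not directly apply here. The effective walk produced by OZ theory is a two-dimensional semi-directed walk $\walk_i=(\sfT_i,\sfZ_i)$ with $\bbN\times\bbZ$-valued steps; the conditioning is on $\{H_{(n,v)}<H_{\bbH_-}\}$, i.e., on first reaching the point $(n,v)$ before entering the lower half-plane, with a \emph{random} number of steps and with endpoints $u,v$ that grow with $n$ (up to $\delta_n\sqrt n$) rather than remaining bounded. One needs uniformity in $u,v$ over that range, plus a random time change to pass from the step count $k$ to the horizontal coordinate $\sfT_k$. None of this follows from the classical bridge result: the paper proves new hitting-time asymptotics (Theorem~\ref{thm:tpf}), the soft-core bound (Theorem~\ref{thm:eb-walk}), and a tailored invariance principle (Theorem~\ref{thm:conv}) building on Denisov--Wachtel. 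This Section~\ref{sec:Fluct} work is genuine new content, not a citation.

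So the proposal is structurally on target but contains a real gap at the invariance-principle step, and underreports the technical content of the repulsion bootstrap.
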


\subsection{Results in related settings}

We describe here a few results that would follow by minor adaptations of our analysis. We state the results in the language of high-temperature random-cluster measures, but there are straightforward reformulations in terms of the low-temperature Potts models. Let \(\Lambda_N = \{-N,\dots, N\}^2\) and let \(v_L,v_R\) and \(\Gamma=C_{v_L,v_R}\) be as before. Let \(\mathcal{L}\) be the set of edges with both endpoints having second coordinate \(0\). Define \(\FKlaw_{\beta,J,J',\Lambda}\) the random-cluster measure with edge weights \(e^{\beta}-1\) in \(\Lambda_+\setminus \mathcal{L}\), \(e^{J\beta}-1\) for edges in \(\mathcal{L}\) and \(e^{\beta J'}-1\) for edges having at least one endpoint in \(\{-N,\dots,N\}\times \{-1,\dots,-N\}\). In particular, \(\FKlaw_{\beta,1,0,\Lambda} = \FKlaw_{\beta,\Lambda_+}\) and the case \(J'=1\) is the defect line setting of~\cite{Ott+Velenik-2018}. Let  \(\Gamma^+\), \(\Gamma^-\), \(\hat{\Gamma}^+\) and \(\hat{\Gamma}^-\) be defined as before.
\begin{theorem}\label{thm:Main_Weak_couplings}
	Fix \(\beta<\betac(q)\), \(0\leq J'<1\) and \(0\leq J\leq 1\).
	Then, for any \(\epsilon>0\),
	\[
	\lim_{N\to\infty} \FKlaw_{\beta,J,J',\Lambda}\bigl(\sup_{t\in[0,1]} \babs{ \hat\Gamma^+(t) - \hat\Gamma^-(t) } > \epsilon \bgiven v_L\leftrightarrow v_R \bigr) = 0
	\]
	and
	\[
	\hat\Gamma^+ \Rightarrow \sqrt{\chi}\excursion,
	\]
	where \(\chi\) and \(\excursion\) are as in Theorem~\ref{thm:Main}.
\end{theorem}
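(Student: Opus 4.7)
The plan is to follow the roadmap used for Theorem~\ref{thm:Main} and verify that each of its ingredients survives the introduction of the modified couplings \(J \le 1\) on \(\mathcal{L}\) and \(J' < 1\) in \(\Lambda_-\). That roadmap consists of three pillars: (i) Ornstein--Zernike asymptotics of the form \(\FKlaw_{\beta,\Lambda_+}(v_L\leftrightarrow v_R) \asymp N^{-1/2} e^{-2N\tau}\); (ii) the entropic repulsion statement of Lemma~\ref{lem:wall_to_energy_cost}, which forces \(\Gamma\) away from \(\mathcal{L}\) on a diffusive scale; and (iii) the OZ random-walk representation of \(\Gamma\), combined with an invariance principle for a random walk conditioned to stay positive. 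Pillar (iii) uses only the effective Markovian structure produced by (i) and (ii), and therefore applies verbatim once the first two have been extended.

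For (i), the target is \(-\tfrac{1}{2N}\log \FKlaw_{\beta,J,J',\Lambda}(v_L\leftrightarrow v_R)\to \tau\) with an unchanged \(N^{-1/2}\) prefactor. The upper bound uses stochastic monotonicity in the edge weights of the random-cluster model: since \(J \le 1\), \(J' < 1\) and \(q \ge 1\), reducing weights away from the bulk can only decrease connection probabilities, so \(\FKlaw_{\beta,J,J',\Lambda}(v_L\leftrightarrow v_R) \le \FKlaw_{\beta}(v_L\leftrightarrow v_R)\), for which the desired asymptotics are known. The matching lower bound is obtained by restricting to configurations whose realising path from \(v_L\) to \(v_R\) lies strictly above \(\mathcal{L}\), where all weights are bulk, and then invoking the half-space OZ estimates of Section~\ref{sec:OZinfVol_Consequences}.

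The heart of the argument is the extension of Lemma~\ref{lem:wall_to_energy_cost}. Two new features must be absorbed. The modification \(J \le 1\) along \(\mathcal{L}\) only weakens the line relative to the bulk, so it cannot introduce a pinning effect; this case is handled by a direct monotone coupling against the \(J=1\) setting. The modification \(J'<1\) in \(\Lambda_-\) is more subtle because it allows the cluster to excur below \(\mathcal{L}\) rather than being confined to \(\Lambda_+\). Here the strict inequality \(J'<1\) is essential: every open edge touching \(\Lambda_-\) carries a ratio \((e^{J'\beta}-1)/(e^{\beta}-1)<1\) relative to a bulk edge, so that an excursion of \(\Gamma\) of geometric size \(m\) below \(\mathcal{L}\) is damped, after summing over shapes, by a factor exponentially small in \((1-J')m\). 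Compounded with the OZ cost of the corresponding detour above \(\mathcal{L}\), this suppresses excursions of macroscopic depth uniformly in \(N\) and delivers the analogue of Lemma~\ref{lem:wall_to_energy_cost}.

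The main obstacle, I anticipate, lies in assembling these two pieces into a single repulsion statement compatible with the random-walk representation of Section~\ref{sec:Effective_HS_RW}: one cannot simply condition on \(\Gamma \subset \Lambda_+\) without destroying Markovianity. The natural route is to decompose \(\Gamma\) into a \emph{skeleton} lying in \(\Lambda_+\cup\mathcal{L}\), obtained by erasing all excursions of \(\Gamma\) below \(\mathcal{L}\), together with those excursions themselves; one would then show, via a block-mixing argument exploiting the exponential damping above, that conditionally on the skeleton the excursions contribute only a tight multiplicative correction, and that the skeleton inherits the pure-wall repulsion uniformly in \((J,J')\). Implementing this cleanly and uniformly in \(J \in [0,1]\) and \(J' \in [0,1)\) will absorb the bulk of the technical work, but no genuinely new idea beyond strict monotonicity in \(J'\) should be needed.
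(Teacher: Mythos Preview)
Your three-pillar outline is the right skeleton, but you are proposing substantially more work than the paper needs, because you have overlooked that the paper already built the required generality into its main technical lemmas.

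The key point is Remark~\ref{rem:LineVsHalfSpace}: Lemma~\ref{lem:wall_to_energy_cost} is \emph{already} stated and proved for an arbitrary weight \(0\le a<e^\beta-1\) on edges touching \(\Lambda_-\), which is exactly \(a=e^{\beta J'}-1\) when \(J'<1\). The paper explicitly says it works with the general form~\eqref{eq:wall_to_energy_cost} (rather than the simpler hard-wall version available when \(a=0\)) precisely so that the proof of Theorem~\ref{thm:Main} carries over verbatim to Theorem~\ref{thm:Main_Weak_couplings}. Correspondingly, the effective random-walk upper bound of Theorem~\ref{thm:eb-walk} is a \emph{soft-core} estimate (a penalty \(\delta^{\calL_{\bbH_-}}\) rather than a hard positivity constraint), and this is what drives Lemma~\ref{lem:HalfSpace_Connexions_UB} and hence the entropic repulsion Lemma~\ref{lem:EntropicRepulsion} in the general-\(a\) setting. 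The modification \(J\le1\) on \(\mathcal L\) is absorbed by monotonicity on the upper-bound side and is irrelevant on the lower-bound side since paths can avoid \(\mathcal L\) entirely (vertical edges leaving height~\(0\) are not in \(\mathcal L\)). Thus the paper's proof of Theorem~\ref{thm:Main_Weak_couplings} is literally one sentence (Section~\ref{sec:TheOtherTwoThms}).

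Your proposed skeleton decomposition---erasing excursions below \(\mathcal L\) and controlling them separately---might be made to work, but it is both unnecessary and more delicate than what is actually required. The cleaner route is to keep the cluster intact, transfer the weight deficit in \(\Lambda_-\) into a penalty on pivotal edges via the monotone coupling of Appendix~\ref{app:coupling}, pass from pivotal edges to cone-points, and let the soft-core random-walk estimate absorb the excursions below height~\(0\). No new Markovian structure needs to be manufactured. A small correction: the connection probability has prefactor \(N^{-3/2}\), not \(N^{-1/2}\) (see Theorem~\ref{thm:Connection} and Lemma~\ref{lem:NonSharpLB}).
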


\begin{theorem}\label{thm:Main_Line}
	Fix \(\beta<\betac(q)\) and \(0\leq J< 1\).
	Then, for any \(\epsilon>0\),
	\[
	\lim_{N\to\infty} \FKlaw_{\beta,J,1,\Lambda}\bigl(\sup_{t\in[0,1]} \babs{ \hat\Gamma^+(t) - \hat\Gamma^-(t) } > \epsilon \bgiven v_L\leftrightarrow v_R \bigr) = 0
	\]
	and
	\[
	\Psi \Rightarrow \frac{1}{2}\nu^{+ } + \frac{1}{2}\nu^{- }
	\]
	where \(\Psi\) is the law of \(\hat\Gamma^+\) and \(\nu^{\pm }\) are the law of \( \pm\sqrt{\chi}\excursion\), and the rest is as in Theorem~\ref{thm:Main}.
\end{theorem}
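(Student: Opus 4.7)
The plan is to combine the arguments developed for Theorem~\ref{thm:Main_Weak_couplings} with the reflection symmetry $\Phi : (x,y) \mapsto (x,-y)$ enjoyed by $\FKlaw_{\beta, J, 1, \Lambda}$ in the present setting. Indeed the edge weights are $\Phi$-invariant (the bulk weight $e^{\beta}-1$ trivially, and the defect-line weight $e^{J\beta}-1$ because $\Phi$ fixes $\mathcal{L}$), while the event $\{v_L\leftrightarrow v_R\}$ is also $\Phi$-invariant since $v_L,v_R\in \mathcal{L}$. Consequently, $(\Gamma^+, \Gamma^-)$ and $(-\Gamma^-, -\Gamma^+)$ have the same law under $\FKlaw_{\beta,J,1,\Lambda}(\,\cdot \mid v_L\leftrightarrow v_R)$.

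The proximity statement $\sup_t |\hat\Gamma^+(t) - \hat\Gamma^-(t)|\to 0$ in probability is inherited verbatim from the proof of Theorem~\ref{thm:Main_Weak_couplings}: it rests on the Ornstein--Zernike decomposition of $C_{v_L, v_R}$ into irreducible pieces, a bulk property insensitive to the value of $J'$. The cluster's width remains $O(1)$, i.e., $o(\sqrt N)$ after diffusive rescaling.

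For the scaling limit itself, the crux is to prove the one-sided confinement
\[
\FKlaw_{\beta,J,1,\Lambda}\bigl(A_+ \cup A_- \bgiven v_L\leftrightarrow v_R\bigr) \to 1 \quad \text{as } N\to\infty,
\]
where $A_\pm = \{C_{v_L, v_R} \subset \Z\times (\pm\Z_+)\}$. Granted this, once one conditions on $A_+$ the cluster lies in the closed upper half-plane and the defect line $\mathcal{L}$ plays precisely the role of the repulsive wall treated in Theorem~\ref{thm:Main_Weak_couplings}; the full machinery of that proof (entropic repulsion via Lemma~\ref{lem:wall_to_energy_cost}, effective random walk, excursion limit) applies with only cosmetic changes and yields $\hat\Gamma^+ \Rightarrow \sqrt{\chi}\,\excursion$ under $\FKlaw(\,\cdot \mid A_+, v_L\leftrightarrow v_R)$. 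By $\Phi$-symmetry, conditioning on $A_-$ produces $-\sqrt{\chi}\,\excursion$. Combined with $\FKlaw(A_\pm \mid v_L\leftrightarrow v_R) \to \tfrac12$ (which follows from the confinement above, $\Phi$-symmetry, and the easy bound $\FKlaw(C_{v_L,v_R}\subset \mathcal{L}\mid v_L\leftrightarrow v_R)\to 0$), this delivers the claimed mixture $\tfrac12\nu^+ + \tfrac12\nu^-$.

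The confinement step is where the hypothesis $J<1$ is essential and is the main new input of the argument; it is also the principal obstacle. The heuristic is clear: a cluster straddling $\mathcal{L}$ must be ``pinched'' at least once at a vertex of $\mathcal{L}$, and each such pinch both forces the use of weakened edges (a factor $\lesssim e^{(J-1)\beta}<1$) and introduces a macroscopic bend in the OZ spine that is penalized by the strict positivity of the curvature $\chi$ of the Wulff shape. Turning this picture into a rigorous sign-flip surgery on irreducible blocks, with quantitative control good enough to beat the combinatorial enumeration of sign patterns, is where the bulk of the extra work compared to Theorem~\ref{thm:Main_Weak_couplings} will lie; without $J<1$, no such mechanism exists and the scaling limit would instead be a Brownian bridge.
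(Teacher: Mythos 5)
Your high-level plan coincides with the paper's sketch: exploit the reflection $(x,y)\mapsto(x,-y)$, argue that with probability tending to one the cluster is essentially confined to one side of $\mathcal{L}$, then run the wall analysis conditionally on the chosen side and obtain the mixture $\tfrac12\nu^{+}+\tfrac12\nu^{-}$ by symmetry. However, the confinement statement you set out to prove is too strong and is in fact false. Your events $A_\pm=\{C_{v_L,v_R}\subset\Z\times(\pm\Z_+)\}$ require the \emph{entire} cluster to live in a single closed half-plane. By finite energy, the probability that $C_{v_L,v_R}$ contains vertices strictly above \emph{and} strictly below $\mathcal{L}$ in a neighbourhood of $v_L$ (for instance, that both edges $\{v_L,(-N,1)\}$ and $\{v_L,(-N,-1)\}$ are open) is bounded away from $0$ uniformly in $N$, so $\FKlaw_{\beta,J,1,\Lambda}(A_+\cup A_-\mid v_L\leftrightarrow v_R)\not\to 1$ and the subsequent reduction cannot be launched from this premise. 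The statement the paper actually uses, and which suffices because this boundary wiggling is $o(\sqrt N)$ after diffusive rescaling, is the weaker avoidance of the \emph{symmetrized} version of the strip $\Delta(N,\epsilon)$, followed by conditioning on the half-plane containing the bulk of the spine (phrased in the paper as conditioning on the half-space containing $\max\Gamma^+$).

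Concerning the mechanism: the paper invokes neither a sign-flip surgery on irreducible blocks nor a curvature penalty of the Wulff shape. It reruns the entropic-repulsion argument of Lemma~\ref{lem:EntropicRepulsion} essentially verbatim — a union bound over cone-points in the symmetrized strip, with Lemma~\ref{lem:wall_to_energy_cost} supplying a per-cone-point penalty whenever the cone-point sits \emph{on $\mathcal{L}$} (this is precisely where the hypothesis $J<1$ enters, via strict monotonicity of the coupling on the weakened line edges), combined with the half-space upper bound of Lemma~\ref{lem:HalfSpace_Connexions_UB} and the lower bound of Lemma~\ref{lem:NonSharpLB}. Thus the ``bulk of the extra work'' you anticipate is not a fresh combinatorial enumeration of sign patterns but a direct re-application of Section~\ref{sec:Entropic_repulsion} with the role of $\Lambda_-$ played by $\mathcal{L}$.
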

Finally, the results and techniques developed in Sections~\ref{sec:Entropic_repulsion}--\ref{sec:Fluct} pave the way for proving the following statement (the rather tedious details are omitted; see~\cite{Campanino+Ioffe+Louidor-2010} for the proof of a similar statement):
\begin{theorem}
	\label{thm:Connection}
	Fix \(\beta<\betac(q)\). For any pair \((J,J')\) satisfying \(0\leq J'<1\) and \(0\leq J\leq 1\) or \(J'=1\) and \(0\leq J<1\), there exists \(C\geq 0\) (depending on \(\beta,q,J,J'\)) such that
	\begin{equation*}
		\FKlaw_{\beta,J,J',\Lambda}(v_L\leftrightarrow v_R) = \frac{C}{N^{3/2}} e^{-2\tau N}(1+\sfo_N(1)).
	\end{equation*}
\end{theorem}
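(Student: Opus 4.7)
The plan is to adapt the Ornstein--Zernike (OZ) scheme underlying Sections~\ref{sec:OZinfVol_Consequences}--\ref{sec:Effective_HS_RW}, combined with a local limit theorem for a random walk conditioned to form an excursion, in the spirit of~\cite{Campanino+Ioffe+Louidor-2010}. The extra $N^{-3/2}$ factor, as opposed to the bulk OZ asymptotic $\sim N^{-1/2} e^{-\tau N}$ for $0 \leftrightarrow (N,0)$ in the plane, is precisely the signature of the entropic repulsion of $\Gamma$ from the wall quantified in Section~\ref{sec:Entropic_repulsion}.

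The first step is a cone-point / irreducible-piece decomposition of the cluster $\Gamma = C_{v_L, v_R}$. Under $\FKlaw_{\beta, J, J', \Lambda}(\,\cdot \given v_L \leftrightarrow v_R)$ the cluster typically admits a macroscopic number of (forward--backward) cone-points, and between any two consecutive ones lies an \emph{irreducible piece}. After isolating the two end-pieces near $v_L, v_R$, whose $(J,J')$-dependence is absorbed into constants $A_L, A_R$ independent of $N$, the remaining irreducible pieces form a sequence of i.i.d.\ diamond displacements $(X_i, Y_i)$ distributed according to the effective half-space law $\bfp$ constructed in Section~\ref{sec:Effective_HS_RW}: the horizontal increment $X$ has exponential tails and positive mean, while $Y$ is centred with finite variance, and this variance is linked to $\chi$ by the classical OZ relation between the step distribution and the curvature of the Wulff shape.

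This reduction expresses the connectivity, up to a $(1 + \smo{1})$ multiplicative correction, as
\begin{equation*}
\FKlaw_{\beta, J, J', \Lambda}(v_L \leftrightarrow v_R) = A_L A_R \, e^{-2 \tau N} \sum_{k \geq 1} \bfp^{\otimes k}\!\Bigl( \sum_{i=1}^{k} X_i = 2N, \ \sum_{i=1}^{k} Y_i = 0, \ \sum_{i=1}^{\ell} Y_i > 0 \ \forall \, 1 \leq \ell < k \Bigr).
\end{equation*}
The local CLT for the horizontal walk concentrates $k$ around $cN$ and contributes an $N^{-1/2}$ factor. The constraint that the $Y$-walk stay strictly positive and return to $0$ in $\sim N$ steps is a classical excursion local-limit statement (Caravenna, Doney, Vatutin--Wachtel), contributing an additional $N^{-1}$ factor. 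Combined, these yield the claimed $N^{-3/2}$ prefactor, with $C$ explicit in terms of $\tau$, $\chi$, the constants $A_L, A_R$, and moments of $\bfp$.

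The main obstacle is not the bulk asymptotic, which is by now standard in the OZ framework, but rather the matching between the bulk irreducible decomposition and the two end-pieces of $\Gamma$, which sit on the wall. One has to prove that, with overwhelming probability, the first and last cone-points lie at horizontal distance $O(1)$ from $v_L, v_R$ and at height $O(1)$ above the wall, so that the entropic-repulsion analysis genuinely decouples from the end-pieces and the constants $A_L, A_R$ truly factor out. In the case $J' = 1$ with $J < 1$, this requires quantifying the effect of the defect line via Lemma~\ref{lem:wall_to_energy_cost} together with the mixing of the random-cluster measure, to ensure that the macroscopic interface stays away from the wall and that the strict-positivity constraint on the $Y$-walk is the correct one (rather than non-negativity, which would downgrade the $N^{-1}$ excursion factor to $N^{-1/2}$).
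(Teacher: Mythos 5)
Your sketch follows exactly the blueprint the paper itself indicates (which only gestures at the argument and defers to~\cite{Campanino+Ioffe+Louidor-2010}): the irreducible-piece/cone-point decomposition of Theorem~\ref{thm:OZ_main} and Corollary~\ref{cor:IRD}, decoupling of wall-pinned end pieces into $N$-independent constants via entropic repulsion (Lemma~\ref{lem:EntropicRepulsion}) and the mixing estimate~\eqref{eq:VCondPhiBound}, and then the excursion local limit theorem --- precisely Theorem~\ref{thm:tpf} summed over the number of steps as in~\eqref{eq:k-prop} --- to extract the $N^{-3/2}$ with a sharp constant. So this is the paper's intended route; the remaining content is indeed the ``tedious details'' the authors suppress.

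Two small imprecisions worth flagging, neither fatal. First, the factorization of the $N^{-3/2}$ into ``$N^{-1/2}$ from the horizontal LCLT'' and ``$N^{-1}$ from the excursion constraint'' is only heuristic: the horizontal and vertical displacements $\theta$ and $\zeta$ of an irreducible piece are not independent, and the correct way to obtain the exponent (and the constant) is the joint conditioned local limit theorem, which is what Theorem~\ref{thm:tpf} delivers and what your displayed sum-over-$k$ formula implicitly needs. Second, in the $J'=1$, $J<1$ case your worry that ``non-negativity (rather than strict positivity) would downgrade $N^{-1}$ to $N^{-1/2}$'' is misplaced: for a lattice walk, bridges constrained to stay $\geq 0$ and bridges constrained to stay $>0$ both decay like $k^{-3/2}$, differing only in the constant; the genuine danger is that with the defect line the walk might cross $\mathcal{L}$ freely (which \emph{would} downgrade to $N^{-1/2}$), and the role of Lemma~\ref{lem:wall_to_energy_cost} is exactly to show that each crossing pays a geometric penalty, forcing the walk to stay on one side and restoring the excursion exponent --- with the constant $C$ acquiring the factor $2$ from the two symmetric sides. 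Relatedly, the exact diamond-confinement constraint (the whole piece, not just the cone-point, must avoid $\bbH_-$) is slightly stronger than $\sum_{i\le\ell} Y_i>0$; as in Subsection~\ref{sub:NonSharpLB-prf} this costs a multiplicative constant that folds into $C$, so your formula should be read as holding up to a constant, not up to $1+\smo{1}$, before the constants are collected.
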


\subsection{Organization of the Paper}

In Section~\ref{sec:OZinfVol_Consequences} we present some results about the geometry of long connections in the infinite-volume random-cluster measure and deduce that typically, under \(\FKlaw_{\beta,\Lambda_+}(\,\cdot \given v_L\leftrightarrow v_R)\), the long cluster has the structure of a concatenation of small ``irreducible'' pieces. Section~\ref{sec:Entropic_repulsion} is devoted to the proof that the long cluster under \(\FKlaw_{\beta,\Lambda_+}(\,\cdot \given v_L\leftrightarrow v_R)\) is repulsed far away from the lower boundary of \(\Lambda_+\). We use this repulsion result in Section~\ref{sec:Effective_HS_RW} to construct a coupling between \(\Gamma\) under \(\FKlaw_{\beta,\Lambda_+}(\,\cdot \given v_L\leftrightarrow v_R)\) and an effective {semi}-directed random walk conditioned to stay in the upper half-plane. The latter is studied in Section~\ref{sec:Fluct} where an invariance principle to Brownian excursion is proven for a general class of such {semi}-directed random walks.

\section{Diamond Decomposition and Ornstein--Zernike Theory}
\label{sec:OZinfVol_Consequences}

The main result we will need to import is the Ornstein--Zernike representation of long subcritical clusters derived in~\cite{Campanino+Ioffe+Velenik-2008} and~\cite{Ott+Velenik-2018}. A random-walk representation of long subcritical clusters under {the unique infinite-volume measure}  {\(\FKlaw\)} 
 was constructed in~\cite{Campanino+Ioffe+Velenik-2008} in the general framework of Ruelle transfer operator for full shifts. In~\cite[Section 4]{Ott+Velenik-2018} 
an improved renewal version of~\cite{Campanino+Ioffe+Velenik-2008} was developed. We recall here the main objects and the result we will use.

\subsection{Cones and Diamonds}

We first define the cones and the associated diamonds:
\begin{gather*}
	\fcone = \setof{i\in\Z^2}{i_1 \geq \abs{i_2}}, \quad \bcone = -\fcone,\\
	\diam(u,v) = (u+\fcone)\cap (v+\bcone).
\end{gather*}
We will also need, for \(\delta>0\), \(\fcone_{\delta} = \setof{i\in\Z^2}{\delta i_1 \geq \abs{i_2}}\). Of course, \(\fcone=\fcone_1\).

Let \(\gamma=(V_{\gamma},E_{\gamma})\) be a connected subgraph of \(\Z^2\). We will say that \(\gamma\) is:
\begin{itemize}
	\item \emph{Forward-confined} if there exists \(u\in V_{\gamma}\) such that \(V_{\gamma}\subset u+\fcone\). When it exists, such a \(u\) is unique; we denote it \(\fend({\gamma})\).
	\item \emph{Backward-confined} if there exists \(v\in V_{\gamma}\) such that \(V_{\gamma}\subset v+\bcone\). When it exists, such a \(v\) is unique; we denote it \(\bend({\gamma})\).
	\item \emph{Diamond-confined} if it is both forward- and backward-confined.
	\item \emph{Irreducible} if it is diamond-confined and it is not the concatenation of two other diamond-confined graphs (see below for the definition of concatenation).
\end{itemize}

We will say that \(v\in \gamma\) is a \emph{cone-point} of \(\gamma\) if
\begin{equation*}
	V_{\gamma}\subset v+ (\bcone\cup \fcone).
\end{equation*}
We denote \( \CPts(\gamma)\) the set of cone-points of \(\gamma\).

We call a graph with a distinguished vertex a \emph{marked graph}. The distinguished vertex is denoted \(v^*\). Define
\begin{itemize}
	\item The sets of confined pieces:
	\begin{gather*}
	\SetRootMarkBackCont = \{\gamma \text{ marked backward-confined with } v^* =0 \},\\
	\SetRootMarkForwCont = \{\gamma \text{ marked forward-confined with } \fend(\gamma) =0 \},\\
	\SetRootDiaCont = \{\gamma \text{ diamond-confined with } \fend(\gamma) =0 \},\\
	\SetRootDiaCont^{\textnormal{irr}} = \{\gamma \text{ irreducible with } \fend(\gamma)=0\}.
	\end{gather*}
	We see {that} \(\SetRootDiaCont\) could be viewed as a subset of both \(\SetRootMarkBackCont\) (via the marking of \(\fend(\gamma)\)) and \(\SetRootMarkForwCont\) (via the marking of \(\bend(\gamma)\)).
	To fix ideas we shall, unless stated otherwise, think of $\SetRootDiaCont$ as of a subset of $\SetRootMarkBackCont$, that is, by default the vertex $\fend (\gamma) = 0$ is marked for any 		$\gamma\in \SetRootDiaCont$. 
	\item The displacement along a piece:	
	\begin{equation}
	\label{eq:displacement}
		\displace(\gamma) \defby (\theta(\gamma), \zeta(\gamma)) =
		\begin{cases}
			\bend(\gamma)	& \text{ if } \gamma\in
			\SetRootMarkBackCont,\ \text{in particular, if $\gamma\in \SetRootDiaCont$,} \\
			v^* 			& \text{ if } \gamma\in\SetRootMarkForwCont.
		\end{cases}
	\end{equation}
	\item The \emph{concatenation} operation: for \(\gamma_1\in \SetRootMarkBackCont\) and \(\gamma_2\in \SetRootMarkForwCont\) define the concatenation of $\gamma_2$ to $\gamma_1$ as
	\begin{equation*}
		\gamma_1\concatenate \gamma_2 = \gamma_1\cup (\displace(\gamma_1) + \gamma_2).
	\end{equation*}
	The concatenation of two graphs in \(\SetRootDiaCont\) is an element of \(\SetRootDiaCont\) and the concatenation of
	 a graph in \(\SetRootDiaCont\) to an element of \(\SetRootMarkBackCont\) is an element of \(\SetRootMarkBackCont\). The displacement along a concatenation is the sum of the displacements along the pieces.
\end{itemize}

\subsection{Ornstein--Zernike Theory for long Clusters in Infinite Volume}

Recall that $\tau\eone \in \partial{\mathbf K}_{\beta^*}$ is the rightmost point on the boundary of the Wulff shape. It can be informally thought of as the proper drift to stretch phase separation lines in the horizontal direction, see the developments of the Ornstein--Zernike theory in~\cite{Ioffe-1998, Campanino+Ioffe-2002,Campanino+Ioffe+Velenik-2003,Campanino+Ioffe+Velenik-2008,Ioffe-2015,Ott+Velenik-2018}. 
The main claim we import from~\cite{Ott+Velenik-2018} is
\begin{theorem}
	\label{thm:OZ_main}
	There exist \(C\geq 0,c>0,\delta>0\) such that one can construct two positive finite measures  \(\rho_L,\rho_R\) on \(\SetRootMarkBackCont\) and \(\SetRootMarkForwCont\) and a probability measure \({\mathbf p}\) on \(\SetRootDiaCont\) such that, for any point \(x{= (x_1 , x_2 )}\in\fcone_{\delta}\) and any bounded function \(f\) of the cluster of \(0\),
	\begin{multline*}
		\Babs{ e^{\tau \eone\cdot x }{\FKlaw} \bigl( f(C_{0,x})\IF{0\leftrightarrow x} \bigr) - \\ -\sum_{\gamma_L,\gamma_R} \rho_L(\gamma_L)\rho_R(\gamma_R)\sum_{M\geq 0}\sum_{\gamma_1,\dots,\gamma_M} \IF{\displace(\gamma)=x}f(\gamma) \prod_{i=1}^M \bfp(\gamma_i) } \leq C\norm{f}_{\infty}e^{-c\norm{x}} ,
	\end{multline*}
	where the sums are over \(\gamma_L\in\SetRootMarkBackCont,\gamma_R\in\SetRootMarkForwCont\) and \(\gamma_i\in\SetRootDiaCont\), 
	{such that the displacement along the concatenation } \(\gamma=\gamma_L\concatenate \gamma_1\concatenate\cdots \concatenate \gamma_M\concatenate \gamma_R\) 
	{satisfies  $\displace  (\gamma )= x$}. Moreover, there exist \(C'\geq 0,c' >0\) such that
	\begin{equation}
	\label{eq:OZ_expDec_steps}
	{
		\max \lbr
		\rho_L(\norm{\displace(\gamma_L)}\geq l) , \rho_R(\norm{\displace(\gamma_R)}\geq l) , {\mathbf p}(\norm{\displace(\gamma_1)}\geq l)\rbr  \leq C'e^{-c' l}.
	}
	\end{equation}
\end{theorem}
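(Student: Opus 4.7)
Since the statement is imported from \cite{Ott+Velenik-2018}, I would just sketch the general Ornstein--Zernike strategy. The underlying philosophy is that under \(\FKlaw\) a long connection from \(0\) to \(x\in\fcone_\delta\) ``pays'' the surface-tension budget \(\tau\eone\cdot x\), and once that budget has been paid the residual entropic fluctuations of the cluster are those of a one-dimensional renewal chain with exponentially integrable steps. The tilt \(e^{\tau\eone\cdot x}\) on both sides of the claimed identity converts a subcritically decaying quantity into one of order \(1\), and the content of the theorem is that, up to an error of order \(e^{-c\|x\|}\), this is exactly the mass of a genuine renewal process built on \(\rho_L,\bfp,\rho_R\).

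The plan breaks into three steps. (i) \emph{Cone-point profusion.} Use the finite-energy property of the random-cluster model, together with a surgery argument (closing a small annulus of dual edges around a well-chosen horizontal segment creates a cone-point at \(O(1)\) energetic cost), to prove that for some \(\varrho,c>0\),
\[
\FKlaw\bigl(0\leftrightarrow x,\, \babs{\CPts(C_{0,x})}<\varrho\|x\|\bigr)\leq C\,e^{-\tau\eone\cdot x}\,e^{-c\|x\|},\qquad x\in\fcone_\delta.
\]
This is where the strict positive curvature \(\chi>0\) of \({\mathbf K}_{\beta^*}\) at \(\tau\eone\) is used: forcing the cluster to avoid cone-points over an extended horizontal stretch strictly exceeds the optimal Wulff cost \(\tau\eone\cdot x\) and yields the extra \(e^{-c\|x\|}\). (ii) \emph{Renewal decomposition.} Any cluster with at least two cone-points admits a unique decomposition
\[
C_{0,x}=\gamma_L\concatenate\gamma_1\concatenate\cdots\concatenate\gamma_M\concatenate\gamma_R,
\]
with \(\gamma_L\in\SetRootMarkBackCont\), \(\gamma_R\in\SetRootMarkForwCont\) and \(\gamma_i\in\SetRootDiaCont^{\textnormal{irr}}\). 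Because cone-points separate spatially disjoint diamonds, the weight of a given decomposition factorizes, up to boundary corrections that are exponentially small in the diamond sizes thanks to subcritical sharpness and the FKG property. Defining
\[
\bfp(\gamma_1)\defby\frac{1}{\lambda}\,e^{\tau\eone\cdot\displace(\gamma_1)}\,\FKlaw\bigl(C_0\text{ begins with the irreducible piece }\gamma_1\bigr),
\]
with \(\lambda\) the Ruelle--Perron--Frobenius eigenvalue of the associated transfer operator (and analogously for \(\rho_L,\rho_R\)), one obtains a probability measure \(\bfp\) on \(\SetRootDiaCont\) with finite exponential moment on \(\displace\). (iii) \emph{Local limit estimate.} Combine (i)--(ii): summing over allowed decompositions reconstructs the renewal sum on the right-hand side, and the configurations discarded in (i) (few cone-points, or degenerate concatenation structure) are absorbed into the error term \(C\|f\|_\infty e^{-c\|x\|}\) uniformly in \(f\).

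The main obstacle is step (i). A naive enumeration only gives \(e^{-\tau\eone\cdot x}\) times polynomial corrections; upgrading this to a strict exponential gain \(e^{-c\|x\|}\) requires the analytic, strictly convex character of \(\partial{\mathbf K}_{\beta^*}\) near \(\tau\eone\). This was established in \cite{Campanino+Ioffe+Velenik-2008} through a coarse-graining that effectively reduces the cluster to a directed random walk confined to a large forward cone, and then sharpened in \cite{Ott+Velenik-2018} into the clean renewal form with the exponential tail bound~\eqref{eq:OZ_expDec_steps}. A secondary, more algebraic difficulty is that the \(\gamma_i\) are \emph{not} independent under the untilted \(\FKlaw\); independence is only recovered after the \(e^{\tau\eone\cdot\cdot}\) tilting, through the Ruelle transfer operator identity, and verifying that the resulting \(\rho_L,\bfp,\rho_R\) are the right finite measures---in particular that \(\bfp\) is a probability measure---is what pins down the correct normalization on the right-hand side of the theorem.
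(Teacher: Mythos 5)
The paper does not prove Theorem~\ref{thm:OZ_main}: it imports it verbatim from~\cite{Ott+Velenik-2018}, which in turn builds on the Ornstein--Zernike machinery developed in~\cite{Campanino+Ioffe+Velenik-2008}. So there is no ``paper's own proof'' to compare against. Your sketch does identify the right framework and the three-stage architecture (cone-point profusion, renewal decomposition, reassembly with a uniform error absorbing the bad events), which matches what happens in those references.

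Two points in your step (i), however, are logically off. First, you invoke the strict positive curvature \(\chi>0\) of \(\partial\bfK_{\beta^*}\) at \(\tau\eone\) to upgrade the surface-tension bound to a strict exponential gain \(e^{-c\norm{x}}\). This is circular: the analyticity and positive curvature of \(\partial\bfK_{\beta^*}\) are \emph{outputs} of the OZ analysis in~\cite{Campanino+Ioffe+Velenik-2008} (the paper itself cites that reference exactly for this fact), not inputs to the cone-point density estimate. What is actually used is the strict positivity of \(\tau\) in all directions (sharpness of the subcritical phase) together with a coarse-graining/skeleton calculus and a sharp triangle inequality for the surface tension, from which the confinement of the cluster to forward cones and the linear density of cone-points follow. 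Second, your informal definition \(\bfp(\gamma_1)\propto e^{\tau\eone\cdot\displace(\gamma_1)}\FKlaw(C_0\text{ begins with }\gamma_1)\) is not well-posed: under the infinite-volume measure there is no intrinsic ``beginning'' of \(C_0\) without first fixing the irreducible decomposition you are trying to define, and the normalization to a genuine probability measure is a nontrivial consequence of the Ruelle transfer-operator/renewal identity, not something one can assert a priori. If you want a faithful sketch, replace (i) by the coarse-graining argument of~\cite{Campanino+Ioffe+Velenik-2008} driven by positivity of \(\tau\), and in (ii) describe \(\bfp\) as the unique tilted weight on \(\SetRootDiaCont^{\textnormal{irr}}\) whose associated renewal mass function matches \(e^{\tau\eone\cdot x}\FKlaw(0\leftrightarrow x)\) up to exponentially small error, as constructed in~\cite[Section~4]{Ott+Velenik-2018}.
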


\begin{remark}
	\label{rem:FullSpace_CPts_density}
	In particular, Theorem~\ref{thm:OZ_main} implies that, up to exponentially small error, \(C_{0,x}\) has a linear (in \(\norm{x}\)) number of cone-points under \(\FKlaw(\,\cdot \given 0\leftrightarrow x )\).
\end{remark}

\subsection{Cone-Points {of} the Half-Space Clusters}

We make here our first use of Theorem~\ref{thm:OZ_main}.
\begin{lemma}
	\label{lem:CPTs}
	Denoting \(\Gamma = C_{v_L,v_R}\). There exist \(\rho>0\) and \(c>0\) such that
	\begin{equation}
	\label{eq:HalfSpace_CPts_density}
	{	\FKlaw_{\Lambda_+}} \bigl( \abs{\CPts(\Gamma)} \leq \rho N \bgiven v_L\leftrightarrow v_R \bigr) \leq e^{-cN}.
	\end{equation}
	Moreover, there exist \(c>0,C\geq 0\) such that
	\begin{equation}
	\label{eq:HalfSpace_CPts_spacing}
	{\FKlaw_{\Lambda_+}}\bigl( \max_{u,v\in \CPts(\Gamma)} \IF{\CPts(\Gamma)\cap ((u_1,v_1)\times \Z) = \varnothing}|u_1-v_1| \geq \log(N)^2 \bgiven v_L\leftrightarrow v_R\bigr) \leq \frac{C}{N^{c\log(N)}}.
	\end{equation}
\end{lemma}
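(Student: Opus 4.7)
The plan is to establish both bounds by combining the infinite-volume Ornstein--Zernike decomposition of Theorem~\ref{thm:OZ_main} with a matching polynomial lower bound on the half-space connection probability $\FKlaw_{\Lambda_+}(v_L\leftrightarrow v_R)$. The numerators $\FKlaw_{\Lambda_+}(\text{bad event},\, v_L\leftrightarrow v_R)$ will then be controlled by transferring the analogous infinite-volume bounds, exploiting stochastic domination and a cluster-inclusion afforded by a monotone coupling.

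\emph{Step 1 (lower bound on the connection).} First I would use Theorem~\ref{thm:OZ_main} in $\FKlaw$ to represent $C_{v_L,v_R}$ as a concatenation $\gamma_L\concatenate\gamma_1\concatenate\cdots\concatenate\gamma_M\concatenate\gamma_R$ of i.i.d.\ irreducible pieces under $\bfp$ with total horizontal displacement $2N$. A local CLT for the associated OZ random walk (using the exponential tails \eqref{eq:OZ_expDec_steps}) shows that $\{C_{v_L,v_R}\subset\Lambda_+\}$ has probability at least $cN^{-A}$ conditional on $v_L\leftrightarrow v_R$, while $\FKlaw(v_L\leftrightarrow v_R)\asymp N^{-1/2}e^{-2\tau N}$. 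Since this event depends only on edges in $\Lambda_+$, the same construction can be implemented inside $\FKlaw_{\Lambda_+}$, giving $\FKlaw_{\Lambda_+}(v_L\leftrightarrow v_R)\geq cN^{-A}e^{-2\tau N}$.

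\emph{Step 2 (bad-event estimates in infinite volume).} Every concatenation point in the decomposition is a cone-point of $C_{v_L,v_R}$, so $|\CPts(C_{v_L,v_R})|\geq M$. The step displacements $\theta(\gamma_i)$ under $\bfp$ have positive finite mean and exponential tails by \eqref{eq:OZ_expDec_steps}; Cram\'er's large-deviation principle then yields, for $\rho>0$ small enough,
\[
\FKlaw\bigl(M\leq\rho N \bgiven v_L\leftrightarrow v_R\bigr) \;\leq\; e^{-cN}.
\]
The same exponential tail, combined with a union bound over the $O(N)$ pieces in the decomposition, gives
\[
\FKlaw\Bigl(\max_i\norm{\displace(\gamma_i)}\geq(\log N)^2 \Bgiven v_L\leftrightarrow v_R\Bigr) \;\leq\; C N e^{-c(\log N)^2}\;\leq\; C\, N^{-c\log N}.
\]
Multiplying by $\FKlaw(v_L\leftrightarrow v_R)$, the unconditioned joint probabilities are at most $e^{-(2\tau+c)N}$ and $N^{-c\log N-1/2}e^{-2\tau N}$ respectively.

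\emph{Step 3 and main obstacle (transfer).} To transfer these estimates to $\FKlaw_{\Lambda_+}$, I would use the stochastic domination $\FKlaw_{\Lambda_+}\preceq\FKlaw$ (valid at subcriticality) via a monotone coupling $(\omega,\tilde\omega)$, under which $\Gamma\subset\tilde\Gamma:=C^{\tilde\omega}_{v_L,v_R}$. The delicate point is that the event $\{|\CPts(\Gamma)|\leq\rho N\}$ is not monotone, so direct domination does not apply. The route I would take exploits the key observation that a cone-point of $\tilde\Gamma$ lying on $\Gamma$ is automatically a cone-point of $\Gamma$ (since $\Gamma\subset\tilde\Gamma$ sits inside the same cones): on the polynomially-probable event $\{\tilde\Gamma\subset\Lambda_+\}$ the two clusters essentially agree, so the bounds of Step 2 transfer to $\Gamma$ up to polynomial corrections coming from a partition-function comparison between $\FKlaw_{\Lambda_+}(v_L\leftrightarrow v_R)$ and $\FKlaw(v_L\leftrightarrow v_R,\,\tilde\Gamma\subset\Lambda_+)$. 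These polynomial corrections, combined with the denominator bound of Step 1, are absorbed by the exponential (resp.\ super-polynomial) gains from Step 2, yielding the conditional bounds $e^{-c'N}$ and $C'N^{-c'\log N}$ stated in the lemma.
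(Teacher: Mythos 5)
Your overall framing (monotone coupling, infinite-volume OZ estimates, and a polynomial lower bound on the half-space two-point function) follows the paper's route. Steps~1 and~2 are essentially what the paper does, with the caveat that the lower bound in Step~1 is proved in the paper via an explicit FKG argument (Lemmas~\ref{lem:LB_HalfSpaceToFullSpace} and~\ref{lem:NonSharpLB}), rather than the somewhat vague ``the same construction can be implemented inside \(\FKlaw_{\Lambda_+}\)''; also, the first bound~\eqref{eq:HalfSpace_CPts_density} does not even need the \(N^{-3/2}\) lower bound, only \(\FKlaw_{\Lambda_+}(v_L\leftrightarrow v_R)=e^{-2\tau N(1+\sfo(1))}\).

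The gap is in Step~3. You correctly note that \(\{|\CPts(\Gamma)|\leq\rho N\}\) is not monotone and that ``a cone-point of \(\tilde\Gamma\) lying on \(\Gamma\) is automatically a cone-point of \(\Gamma\)'', but you then do not establish the crucial remaining fact: when \(\eta\in\{v_L\leftrightarrow v_R\}\), \emph{every} cone-point of \(\tilde\Gamma=\Gamma(\omega)\) does lie on \(\Gamma=\Gamma(\eta)\). The mechanism is the pivotality argument the paper states explicitly: the two horizontal bonds adjacent to a cone-point of \(\Gamma(\omega)\) are pivotal for the increasing event \(\{v_L\leftrightarrow v_R\}\) in \(\omega\); since \(\omega\geq\eta\) and \(\eta\) also satisfies the event, these bonds are pivotal (hence open) for \(\eta\), so the cone-point lies on \(\Gamma(\eta)\). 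From this, \(\CPts(\Gamma(\omega))\subset\CPts(\Gamma(\eta))\), and consequently the bad events (few cone-points, or a large gap between consecutive cone-points) for \(\eta\) imply the same bad events for \(\omega\); the numerator then transfers \emph{exactly}, with no correction, to the infinite-volume measure. Your substitute — restricting to \(\{\tilde\Gamma\subset\Lambda_+\}\) and claiming the clusters ``essentially agree'' there, paying ``polynomial corrections'' — does not work: that event has vanishing probability, the restriction gives a lower bound on the numerator rather than an upper bound, and even on that event \(\Gamma(\eta)\) can be a strict, quite different, subcluster of \(\Gamma(\omega)\). As written, Step~3 cannot be completed along the lines you describe and needs to be replaced by the cone-point inclusion argument above.
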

{Note that the event $\lbr \CPts(\Gamma)\cap ((u_1,v_1)\times \Z) = \varnothing\rbr$ above simply means that $v$ and $u$ are successive cone points. 
}
\begin{proof}
	By the FKG property of the random-cluster measures, as {\(\FKlaw \succcurlyeq \FKlaw_{\Lambda_+}\)}, one can monotonically couple them (for example using the coupling described in Appendix~\ref{app:coupling}). Denote this coupling \(\Psi\) and let \((\omega,\eta)\) be a random vector of law \(\Psi\) with \(\omega\geq \eta\). In particular, for any non-decreasing event \(A\) {such that}  \(\{\eta\in A\}\), all pivotal edges for \(A\) in \(\omega\) are also pivotal for \(A\) in \(\eta\). In the same fashion {if \( \eta \in \lbr v_L\leftrightarrow v_R\rbr\)}, then all  the cone-points of \(\Gamma(\omega)\) are also cone-points of \(\Gamma(\eta)\). Via Remark~\ref{rem:FullSpace_CPts_density}, Theorem~\ref{thm:OZ_main} implies that there exist \(\rho>0\) and \(c>0\) such that
	\begin{equation*}
		{\FKlaw}( \abs{\CPts(\Gamma)}\leq \rho N ,v_L\leftrightarrow v_R )\leq e^{-cN}e^{-2\tau N}.
	\end{equation*}
	Then, by monotonicity and the previous observation on the inclusion of pivotal edges,
	\begin{align*}
		\FKlaw_{\Lambda_+} \bigl( \abs{\CPts(\Gamma)} \leq \rho N, v_L\leftrightarrow v_R \bigr)\leq \FKlaw( \abs{\CPts(\Gamma)}\leq \rho N ,v_L\leftrightarrow v_R ),
	\end{align*}
	implying~\eqref{eq:HalfSpace_CPts_density} as \(\FKlaw_{\Lambda_+} (v_L\leftrightarrow v_R )=e^{-2\tau N(1+\sfo(1))}\). Indeed,
	\begin{align*}
			\FKlaw_{\Lambda_+} \bigl( \abs{\CPts(\Gamma)} \leq \rho N \bgiven v_L\leftrightarrow v_R \bigr)
			&=
			\frac{\FKlaw_{\Lambda_+} \bigl( \abs{\CPts(\Gamma)} \leq \rho N , v_L\leftrightarrow v_R \bigr)}{ \FKlaw_{\Lambda_+} \bigl( v_L\leftrightarrow v_R \bigr)}\\
			&\leq
			\frac{ e^{-cN}e^{-2\tau N} }{ e^{-2\tau N(1+\sfo(1))}} \leq e^{-cN/2},
	\end{align*}
	for \(N\) large enough.
	To get~\eqref{eq:HalfSpace_CPts_spacing}, let \(w_1,\dots,w_m\) be the first coordinate of the cone-points of \(\Gamma(\omega)\), ordered from left to right, and let \(l_i=w_{i+1}-w_i, i=1,\dots,m-1\). Denote by \(w_j'\), \(m'\) and \(l_j'\) the corresponding quantities for \(\Gamma(\eta)\). The left-hand side of~\eqref{eq:HalfSpace_CPts_spacing} becomes
	\begin{equation*}
		\frac{\Psi\bigl( \max_{j\in\{1,\dots,m'\}} l_j' \geq \log(N)^2 , v_L\xleftrightarrow{\eta} v_R\bigr)}{\FKlaw_{\Lambda_+} (v_L\leftrightarrow v_R )}.
	\end{equation*}
	Now, as the cone-points of \(\Gamma(\omega)\) are included in the cone-points of \(\Gamma(\eta)\), 
	\begin{equation*}
		\max_{j\in\{1,\dots,m'\}} l_j'\leq \max_{i\in\{1,\dots,m\}} l_i.
	\end{equation*}
    Notice that both \(\Gamma(\omega)\) and \(\Gamma(\eta)\) are well defined as \(\eta\in\{v_L\leftrightarrow v_R\}\) and \(\omega\geq \eta\). Using the lower bound \(\FKlaw_{\Lambda_+} (v_L\leftrightarrow v_R )\geq CN^{-3/2}e^{-2\tau N}\) from Lemma~\ref{lem:NonSharpLB} and the bound \(\FKlaw(v_L\leftrightarrow v_R)\leq e^{-2\tau N}\), one obtains
	\begin{align*}
		\frac{\Psi\Bigl( \displaystyle\max_{j\in\{1,\dots,m'\}} l_j' \geq \log(N)^2 , v_L\xleftrightarrow{\eta} v_R\Bigr)}{\FKlaw_{\Lambda_+} (v_L\leftrightarrow v_R )}
        &\leq \frac{\Psi\Bigl(\displaystyle \max_{i\in\{1,\dots,m\}} l_i \geq \log(N)^2 , v_L\xleftrightarrow{\omega} v_R\Bigr)}{CN^{-3/2}e^{-2\tau N}}\\
		&\leq
		C^{-1}N^{3/2}\FKlaw\bigl( \max_{i\in\{1,\dots,m\}} l_i \geq \log(N)^2 \bgiven v_L\leftrightarrow v_R\bigr).
	\end{align*}
    The bound in~\eqref{eq:HalfSpace_CPts_spacing} thus follows from~\eqref{eq:OZ_expDec_steps} and standard estimates on the maximum of an \iid family.
\end{proof}
For future use, it is convenient to reformulate Lemma~\ref{lem:CPTs} as follows: 
\begin{corollary} 
	\label{cor:IRD}
	There exist \(\rho>0\), \(C>0\) and \(c>0\) such that the following statements hold for all $N$ sufficiently large:
	\begin{enumerate}[label=\arabic*.]
	\item Up to an event of probability at most \(e^{-c N}\) under $\FKlaw_{\Lambda_+} ( \, \cdot \given  v_L\leftrightarrow v_R)$, the open cluster $C_{v_L, v_R}$ admits an irreducible decomposition
	 \begin{equation}\label{eq:IRD} 
		 C_{v_L, v_R} = \gamma_L\circ\gamma_1\circ\dots\circ \gamma_k\circ\gamma_R, 
	 \end{equation}
	 with $\gamma_L\in\frB_L, \gamma_R\in \frB_R$ and with at least $k\geq \rho N$ irreducible pieces $\gamma_1, \dots ,\gamma_k\in \SetRootDiaCont^{\textnormal{irr}}$.
	 \item Up to an event of probability at most $\frac{C}{N^{c\log(N)}}$ under $\FKlaw_{\Lambda_+}(\, \cdot \given v_L\leftrightarrow v_R)$, the irreducible pieces (viewed as connected subgraphs of the graph $\bbZ^2$) in the 
	 decomposition~\eqref{eq:IRD} satisfy: 
	 \begin{equation}\label{eq:diam-bound} 
		 \max\{ \operatorname{diam}(\gamma_L), \operatorname{diam}(\gamma_1), \dots, \operatorname{diam}(\gamma_k), \operatorname{diam}(\gamma_R)\} 
		 \leq (\log N)^2 , 
	 \end{equation}
	 where $\operatorname{diam}(A)$ is the Euclidean diameter of a set $A\subset\bbR^2$.
	\end{enumerate}
\end{corollary}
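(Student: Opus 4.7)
My plan is to derive Corollary~\ref{cor:IRD} as a reformulation of Lemma~\ref{lem:CPTs}, supplemented by a short tail bound for the two extremal pieces.

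First I would observe that $\CPts(\Gamma)$ canonically produces the decomposition~\eqref{eq:IRD}. Enumerate the cone-points of $\Gamma$ as $z_1,\dots,z_k$ in order of increasing first coordinate and write $c_j=z_j\cdot\eone$. For $i\in\{1,\dots,k-1\}$ let $\gamma_i$ be the subgraph of $\Gamma$ induced by the vertices with first coordinate in $[c_i,c_{i+1}]$. Applying the cone-point property simultaneously at $z_i$ and $z_{i+1}$ forces
\[
\gamma_i \subset (z_i+\fcone)\cap (z_{i+1}+\bcone) = \diam(z_i,z_{i+1}),
\]
and since there are no cone-points strictly between $z_i$ and $z_{i+1}$, the piece $\gamma_i$ is automatically irreducible. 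Using that $v_L$ has the minimal first coordinate in $\Gamma$ (so $v_L\in z_1+\fcone$ forces $z_1=v_L$), the cone-point property at $z_1$ yields $\gamma_L \subset z_1+\bcone$, hence $\gamma_L\in\frB_L$ after marking $v^*=z_1$ and translating; the symmetric argument gives $\gamma_R\in\frB_R$.

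The piece count in item~1 then follows immediately from~\eqref{eq:HalfSpace_CPts_density}: on the event $\{|\CPts(\Gamma)|\geq \rho N\}$ the construction above produces at least $\rho N-1\geq \tfrac{\rho}{2}N$ irreducible pieces for $N$ large, after which I would simply relabel $\rho$. For the diameter bound on the interior pieces, the inclusion $\gamma_i\subset\diam(z_i,z_{i+1})$ combined with the cone condition $|i_2|\leq i_1$ gives $\operatorname{diam}(\gamma_i)\leq \sqrt{2}(c_{i+1}-c_i)$, so~\eqref{eq:HalfSpace_CPts_spacing} directly yields $\operatorname{diam}(\gamma_i)\leq \sqrt{2}(\log N)^2$ for every interior piece on an event of probability at least $1-C/N^{c\log N}$.

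The main obstacle will be bounding $\operatorname{diam}(\gamma_L)$ and $\operatorname{diam}(\gamma_R)$, which are not directly controlled by spacings between consecutive cone-points. To handle them I would rerun the monotone coupling argument from the proof of Lemma~\ref{lem:CPTs}: under the coupling $(\omega,\eta)\sim\Psi$ with $\omega\geq \eta$, the inclusion $\CPts(\Gamma(\omega))\subseteq \CPts(\Gamma(\eta))$ implies that the horizontal extent of $\gamma_L$ under $\eta$ is dominated by the horizontal extent of $\gamma_L$ under $\omega$. The latter has a sub-exponential tail under $\FKlaw(\,\cdot \given v_L\leftrightarrow v_R)$ by the displacement estimate~\eqref{eq:OZ_expDec_steps} applied to $\rho_L$ in Theorem~\ref{thm:OZ_main}; absorbing the $N^{3/2}$ ratio from Lemma~\ref{lem:NonSharpLB} into the super-polynomial factor $e^{-c'(\log N)^2}$ produces the desired $C/N^{c\log N}$ bound. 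The symmetric argument controls $\gamma_R$, and once the horizontal extents are bounded the backward- and forward-confinement of $\gamma_L$ and $\gamma_R$ combined with the cone condition control the vertical extents, completing the proof.
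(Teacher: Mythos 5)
Your proof is correct and takes essentially the approach the paper intends: the paper gives no separate argument for Corollary~\ref{cor:IRD}, presenting it as a reformulation of Lemma~\ref{lem:CPTs}, and you supply the missing details. In particular you correctly spot the one non-trivial point in that framing, namely that the spacing bound~\eqref{eq:HalfSpace_CPts_spacing} only concerns pairs of consecutive cone-points, so it does not by itself control $\operatorname{diam}(\gamma_L)$ or $\operatorname{diam}(\gamma_R)$ (since $v_L$ and $v_R$ are generally \emph{not} cone-points of $\Gamma$); your fix via the monotone coupling $(\omega,\eta)$ together with the $\rho_L$, $\rho_R$ tails of~\eqref{eq:OZ_expDec_steps} and the lower bound of Lemma~\ref{lem:NonSharpLB} is exactly the intended argument. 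Two small slips worth noting: after translation the mark of $\gamma_L$ should be $v^*=v_L$, not $v^*=z_1$; and to get from the horizontal extent of $\gamma_L$ to its Euclidean diameter you should explicitly use that $C_{v_L,v_R}\subset\Lambda_+$ forces $\gamma_L$ to lie to the right of $v_L$, which together with backward-confinement yields $\operatorname{diam}(\gamma_L)\leq\sqrt{5}\,\theta(\gamma_L)$ (and likewise $\operatorname{diam}(\gamma_i)\leq\sqrt{2}(c_{i+1}-c_i)$ for interior pieces); these constant factors are absorbed by a harmless relabelling of $c$ in the lemma.
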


\section{Entropic Repulsion}
\label{sec:Entropic_repulsion}

\subsection{A Rough Upper Bound}

We will use the coupling constructed in Appendix~\ref{app:coupling}. As in Appendix~\ref{app:coupling}, let {\(\FKlaw_{a,\Lambda}\)} to denote the random-cluster measure with weight \(e^{\beta}-1\) on edges in \(\Lambda_+\) and weight \(a\) on edges with an endpoint in \(\Lambda_-\). We denote by \(\coupling\) the coupling between \(\FKlaw_{0,\Lambda}= \FKlaw_{\Lambda_+}\) and \(\FKlaw_{e^{\beta}-1,\Lambda}=\FKlaw_{\Lambda}\).
\begin{lemma}
	\label{lem:wall_to_energy_cost}
	For any \( \sfu , \sfv\in\Lambda_+\) and \(0\leq a < e^{\beta}-1\), 
	\begin{equation}
	\label{eq:wall_to_energy_cost}
		\FKlaw_{a,\Lambda_+}(\sfu\leftrightarrow \sfv) \leq \FKlaw\Bigl(\IF{\sfu\leftrightarrow \sfv} \bigl(1-\epsilon(a)\bigr)^{|\Piv(\sfu\leftrightarrow \sfv) \cap \Lambda_-|} \Bigr),
	\end{equation}
	where \(\FKlaw\) is the random-cluster measure on \(\Ztwo\) with edge weight \(e^{\beta}-1\) and \(\epsilon(a) = \frac{e^{\beta}-1-a}{(e^{\beta}-1+q)(e^{\beta}-1)}\).
\end{lemma}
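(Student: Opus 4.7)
The strategy is to invoke the monotone coupling $\coupling$ from Appendix~\ref{app:coupling}, which realizes $(\omega,\eta)$ on a common probability space with marginals $\omega\sim\FKlaw$, $\eta\sim\FKlaw_{a,\Lambda}$, and $\omega\geq\eta$ almost surely (constructed edge by edge via a common uniform random variable), and then to extract one factor $(1-\epsilon(a))$ per pivotal edge of $\omega$ lying in $\Lambda_-$. Since $\eta\leq\omega$, any path from $\sfu$ to $\sfv$ in $\eta$ is also a path in $\omega$; and any edge $e$ pivotal for $\{\sfu\leftrightarrow\sfv\}$ in $\omega$ lies on every such path, hence must carry $\eta_e = 1$ whenever $\sfu\leftrightarrow\sfv$ holds in $\eta$. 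Dropping the analogous requirement for pivotal edges lying in $\Lambda_+$ (for which we only use $\IF{\eta_e = 1}\leq 1$), this gives
\begin{equation*}
\FKlaw_{a,\Lambda}(\sfu\leftrightarrow\sfv)\;\leq\;\e_\coupling\Bigl[\IF{\sfu\leftrightarrow\sfv\text{ in }\omega}\prod_{e\in\Piv_\omega(\sfu\leftrightarrow\sfv)\cap\Lambda_-}\IF{\eta_e = 1}\Bigr].
\end{equation*}

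\textbf{Extracting $\epsilon(a)$.} To bound the conditional probability of the inner event given $\omega$, I would reveal $\omega$ first, so that the set $\Piv_\omega(\sfu\leftrightarrow\sfv)\cap\Lambda_-$ becomes deterministic, and then reveal the values of $\eta$ on these edges one by one through the common-uniform mechanism of $\coupling$. At any such pivotal edge $e\in\Lambda_-$, pivotality forces the endpoints of $e$ to be disconnected in $\omega\setminus e$, and by $\eta\leq\omega$ also in $\eta\setminus e$; the relevant FK conditional probabilities at $e$ are then
\begin{equation*}
p_\omega = \frac{e^\beta-1}{e^\beta-1+q}\qquad\text{and}\qquad q_\eta = \frac{a}{a+q}.
\end{equation*}
The common-uniform coupling yields $q_\eta/p_\omega$ as the conditional probability of $\eta_e = 1$ given $\omega_e = 1$, and a direct computation gives
\begin{equation*}
1-\frac{q_\eta}{p_\omega}\;=\;\frac{q(e^\beta-1-a)}{(a+q)(e^\beta-1)}\;\geq\;\frac{e^\beta-1-a}{(e^\beta-1+q)(e^\beta-1)}\;=\;\epsilon(a),
\end{equation*}
the second inequality being equivalent to $a+q\leq q(e^\beta-1+q)$, which holds for $q\geq 1$ and $a\in[0,e^\beta-1)$. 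Multiplying over all $e\in\Piv_\omega\cap\Lambda_-$ produces the required factor $(1-\epsilon(a))^{|\Piv_\omega\cap\Lambda_-|}$, and combining with the previous display proves~\eqref{eq:wall_to_energy_cost}.

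\textbf{Main obstacle.} The principal difficulty is that pivotality is a \emph{global} property of $\omega$, so the per-edge estimates combine into a genuine product only because the common-uniform coupling $\coupling$ allows the conditional probability of $\eta_e = 1$ at a pivotal edge to be controlled by a fixed function of the FK marginals alone, without residual dependence on the complicated joint structure. Setting up the sequential revelation so that the per-edge bounds chain correctly is the main technical point, and it is precisely what the coupling recalled in Appendix~\ref{app:coupling} is designed to permit.
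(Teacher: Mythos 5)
Your high-level strategy matches the paper's: couple $\FKlaw_{a,\Lambda}$ monotonically with the untruncated measure, observe that every edge pivotal for $\{\sfu\leftrightarrow\sfv\}$ in the larger configuration $\omega$ must be open in the smaller one $\eta$, and extract a per-edge penalty on $\Lambda_-$. However, the key quantitative step — your ``Extracting $\epsilon(a)$'' paragraph — has a genuine gap. In the sequential common-uniform coupling of Appendix~\ref{app:coupling}, the relevant conditional probabilities at step $i$ are $p_i = \FKlaw_{b}(X_{e_i}=1\mid X_{E_{i-1}}=\omega_{E_{i-1}})$ and $q_i = \FKlaw_{a}(X_{e_i}=1\mid X_{E_{i-1}}=\eta_{E_{i-1}})$, i.e.\ FK conditionals given only the \emph{partial} (already-revealed) configurations, marginalizing over the unrevealed edges, and referring to two \emph{different} partial configurations ($\omega_{E_{i-1}}\neq\eta_{E_{i-1}}$ in general). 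These are mixtures lying in the intervals $[\tfrac{b}{b+q},\tfrac{b}{b+1}]$ and $[\tfrac{a}{a+q},\tfrac{a}{a+1}]$, not the extreme ``disconnected'' values $\tfrac{b}{b+q}$ and $\tfrac{a}{a+q}$ you plug in. Pivotality of $e$ in the final $\omega$ does tell you the endpoints of $e$ are disconnected in $\omega\setminus e$, but this is a statement about the fully-revealed configuration, not about the partial configurations $\omega_{E_{i-1}}, \eta_{E_{i-1}}$ that determine $p_i$ and $q_i$. In particular your inequality $q_i/p_i\leq q_\eta/p_\omega$ is unjustified (and can fail: $q_i$ can exceed $\tfrac{a}{a+q}$ while $p_i$ can lie above $\tfrac{b}{b+q}$, and these two excesses are not controllable independently).

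What the paper actually proves (Claim in Appendix~\ref{app:coupling}) is the uniform bound $\coupling(\omega_{e_M}=1,\eta_{e_M}=0\mid U_1,\dots,U_{M-1})\geq\tfrac{b-a}{(b+q)(b+1)}$, obtained by writing $p_i-q_i\geq\FKlaw_b(X_{e_i}=1\mid\omega_{E_{i-1}})-\FKlaw_a(X_{e_i}=1\mid\omega_{E_{i-1}})=\int_a^b\tfrac{d}{ds}\FKlaw_s(\cdot)\,ds$ and bounding the derivative below via a covariance identity and finite energy. This uniformity over the unrevealed past is precisely what lets the per-edge factors chain into the product; your ``Main obstacle'' paragraph gestures at this but does not supply a replacement. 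A second, more minor point: the appendix couples the \emph{finite-volume} measures $\FKlaw_{b,\Lambda}=\FKlaw_\Lambda$ and $\FKlaw_{a,\Lambda}$, not $\FKlaw$ on $\bbZ^2$ as you assert, so the chain of inequalities you set up ends at $\FKlaw_\Lambda\bigl(\IF{\sfu\leftrightarrow\sfv}(1-\epsilon)^{|\Piv\cap\Lambda_-|}\bigr)$; the passage from $\FKlaw_\Lambda$ to $\FKlaw$ (the statement's right-hand side) uses that $\IF{\sfu\leftrightarrow\sfv}(1-\epsilon)^{|\Piv(\sfu\leftrightarrow\sfv)\cap\Lambda_-|}$ is a non-decreasing function, together with FKG monotonicity in boundary conditions, and this final step is missing from your proposal.
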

\begin{proof}
	Let \((\omega,\eta)\sim \coupling\) be as in the Appendix (\(\omega\sim \FKlaw_{\Lambda}\)). Using the monotonicity of \(\coupling\),
	\begin{align}
	\label{eq:p-full-bound}
		\FKlaw_{a,\Lambda_+}(\sfu\leftrightarrow \sfv)
        &=
        \coupling\bigl(\eta\in \{\sfu\leftrightarrow \sfv\}\bigr)\\
        \nonumber
		&=
        \coupling\bigl(\eta,\omega\in \{\sfu\leftrightarrow \sfv\}\bigr)\\
        \nonumber
		&=
        \sum_{\substack{w\in \{0,1\}^{E_{\Lambda} }\\
        	 w\in\{\sfu\leftrightarrow \sfv\}}} \coupling\bigl(\omega = w, \eta\in \{\sfu\leftrightarrow \sfv\}\bigr)\\
         \nonumber
		&\leq
        \sum_{\substack{w\in \{0,1\}^{E_{\Lambda} }\\ w\in\{\sfu\leftrightarrow \sfv\}}} \coupling\bigl(\omega = w, \eta_e=1\ \forall e\in \Piv_{w}(\sfu\leftrightarrow \sfv)\cap \Lambda_- \bigr)\\
        \nonumber 
		&\leq
        \sum_{\substack{w\in \{0,1\}^{E_{\Lambda} }\\ w\in\{\sfu\leftrightarrow \sfv\}}} (1-\epsilon)^{|\Piv_w(\sfu\leftrightarrow \sfv)\cap \Lambda_-|}\FKlaw_{\Lambda}(\omega = w)\\
        \nonumber 
		&=
        \FKlaw_{\Lambda}\Bigl(\IF{\sfu\leftrightarrow \sfv} (1-\epsilon)^{|\Piv(\sfu\leftrightarrow \sfv)\cap \Lambda_-|}\Bigr).
	\end{align}
    The first inequality is inclusion of events and the second one is~\eqref{eq:strictMonotCoupling} with \(\epsilon = \epsilon(a) = \frac{e^{\beta}-1-a}{(e^{\beta}-1+q)(e^{\beta}-1)}\). Now, as \(1>\epsilon>0\), \(\IF{\sfu\leftrightarrow \sfv} (1-\epsilon)^{|\Piv(\sfu\leftrightarrow \sfv)\cap \Lambda_-|}\) is a nondecreasing function (opening an edge can only decrease the number of pivotal once the event is satisfied). Thus, monotonicity of random-cluster measure implies
	\begin{equation*}
		\FKlaw_{\Lambda}\Bigl(\IF{\sfu\leftrightarrow \sfv} (1-\epsilon)^{|\Piv(\sfu\leftrightarrow \sfv)\cap \Lambda_-|}\Bigr) \leq \FKlaw\Bigl(\IF{\sfu\leftrightarrow \sfv} (1-\epsilon)^{|\Piv(\sfu\leftrightarrow \sfv)\cap \Lambda_-|}\Bigr). \qedhere
	\end{equation*}
\end{proof}
\begin{remark}\label{rem:LineVsHalfSpace}
	In the case of the wall (\(a=0\)), one has the following simplification: since the function $\eta \mapsto 
	\IF{\sfu\leftrightarrow \sfv ;   \Piv(\sfu\leftrightarrow \sfv)\cap \Lambda_- = \emptyset} (\eta )$ is non-decreasing, one could have used instead
	\begin{align*}
	\FKlaw_{\Lambda_+}(u\leftrightarrow v) 
	&=
	\FKlaw_{\Lambda_+}(u\leftrightarrow v, \Piv(u\leftrightarrow v)\cap \Lambda_- = \emptyset) \\
	&\leq 
	\FKlaw_{\Lambda}\bigl(u\leftrightarrow v ,  \Piv(u\leftrightarrow v)\cap \Lambda_- = \emptyset \bigr).
	\end{align*}
	We will however work with~\eqref{eq:wall_to_energy_cost}, as we want to keep the proof straightforwardly adaptable to the case of Theorem~\ref{thm:Main_Weak_couplings}.
\end{remark}

\begin{lemma}
	\label{lem:HalfSpace_Connexions_UB}
	There exists \(c\geq 0\) such that, for any \(\sfu = (k , u), 
	\sfv = (k+m ,v) \in \Lambda_+\)	with \(m\) large enough and 
	\( u, v \leq \sqrt{m}\),
	\begin{equation}
		\label{eq:HalfSpace_Connexions_UB}
		e^{\tau m}\FKlaw_{\Lambda_+}(\sfu\leftrightarrow \sfv)\leq \frac{c(1+u)(1+v)}{m^{3/2}}.
	\end{equation}
\end{lemma}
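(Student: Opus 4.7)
The plan is to start from Lemma~\ref{lem:wall_to_energy_cost} applied with $a=0$: it gives
\[
\FKlaw_{\Lambda_+}(\sfu\leftrightarrow\sfv)
\leq
\FKlaw\bigl( \IF{\sfu\leftrightarrow\sfv}\,(1-\epsilon)^{|\Piv(\sfu\leftrightarrow\sfv)\cap\Lambda_-|}\bigr),
\]
with $\epsilon=\epsilon(0)>0$. To exploit this inequality, I would bound the exponent from below by the number of cone-points of the cluster $C_{\sfu,\sfv}$ whose second coordinate is negative: each cone-point is a bottleneck of the connection, and at least one of the edges incident to a cone-point located in $\Lambda_-$ is itself pivotal for $\{\sfu\leftrightarrow\sfv\}$ and contained in $\Lambda_-$. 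This converts the edge-pivotality statement into a purely geometric one about the location of the cone-points of $C_{\sfu,\sfv}$, which is exactly what Theorem~\ref{thm:OZ_main} controls.

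I would then apply Theorem~\ref{thm:OZ_main} to rewrite $e^{\tau m}\FKlaw\bigl(\IF{\sfu\leftrightarrow\sfv}(1-\epsilon)^{|\CPts(C_{\sfu,\sfv})\cap\Lambda_-|}\bigr)$ as a sum over concatenations $\gamma_L\circ\gamma_1\circ\cdots\circ\gamma_M\circ\gamma_R$ of i.i.d.\ irreducible pieces sampled from $\bfp$. Setting $S_k=\sum_{i\leq k}\zeta(\gamma_i)$ and $\Theta_k=\sum_{i\leq k}\theta(\gamma_i)$, the pair $(\Theta_k,S_k)$ is a two-dimensional random walk whose horizontal drift $\bar\theta=\bbE_{\bfp}[\theta]>0$ is strictly positive, whose vertical drift vanishes by the up--down symmetry of $\FKlaw$, and whose increments have exponential tails thanks to~\eqref{eq:OZ_expDec_steps}. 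The boundary pieces $\gamma_L,\gamma_R$, together with the gap between $\sfu,\sfv$ and the extremal cone-points, only introduce bounded, exponentially integrable shifts of the initial and final heights. Modulo these controlled shifts, proving the lemma reduces to the random-walk estimate
\[
\sum_{M\geq 1}\bbE\Bigl[(1-\epsilon)^{\#\{k\leq M\,:\,u+S_k<0\}}\,\IF{\Theta_M=m}\,\IF{S_M=v-u}\Bigr]
\leq
\frac{c(1+u)(1+v)}{m^{3/2}}.
\]

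This random-walk bound is where the main work lies. Cram\'er-type large deviations for $\Theta_M$, together with its exponential tails, restrict the effective range of the sum to $M$ in a window of size $\sqrt{m}$ around $m/\bar\theta$; the two-dimensional local central limit theorem for $(\Theta_M,S_M)$, combined with this summation, produces the factor $m^{-1/2}$ associated with the joint pinning of $\Theta_M$ and $S_M$. It then remains to extract the factor $(1+u)(1+v)/m$ from the soft-killing weight $(1-\epsilon)^{\#\{k:u+S_k<0\}}$. The principal obstacle, and the conceptual heart of the argument, is to show that this soft killing is, up to multiplicative constants, equivalent to imposing the hard positivity constraint $\{u+S_k\geq 0,\,\forall k\leq M\}$, for which the classical ballot-type asymptotics yield exactly $(1+u)(1+v)/M^{3/2}$. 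I would establish this equivalence by decomposing any trajectory at its last visit to $\Z_+$, applying the Markov property at that epoch, and exploiting the fact that an excursion reaching depth $-h$ lasts at least $\asymp h^2$ steps (by the reflection and invariance principles for centered finite-variance walks), so that the penalty dissipates the contribution of deep excursions. Summing the resulting bounds yields the announced estimate.
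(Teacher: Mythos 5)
Your proposal follows the paper's architecture exactly: apply Lemma~\ref{lem:wall_to_energy_cost} with $a=0$, replace pivotal edges by cone-points, invoke Theorem~\ref{thm:OZ_main} to pass to an effective random-walk representation, and reduce the lemma to a soft-killed bridge estimate, which is precisely the content of Theorem~\ref{thm:eb-walk}. The treatment of boundary pieces $\gamma_L,\gamma_R$ via the exponential bounds~\eqref{eq:OZ_expDec_steps}, and the identification of the effective walk as having positive horizontal drift and symmetric vertical increments with exponential tails, all match the paper.

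The gap is in the mechanism you offer for the soft-to-hard-constraint comparison. You assert that an excursion reaching depth $-h$ ``lasts at least $\asymp h^2$ steps (by the reflection and invariance principles)'', and propose to use this to dissipate deep excursions. This is not a valid bound on the local time for two reasons. First, even a bounded-step walk can complete an excursion to depth $-h$ in $O(h)$ steps; the $h^2$ scale is only the \emph{typical} duration, not a lower bound, and the invariance principle gives nothing stronger than typicality. Second, the effective walk has only exponential tails, so a single step can jump by $\asymp h$, meaning there is no deterministic lower bound at all on the local time given that depth $-h$ is attained. What actually closes the argument --- and what the paper's proof of Theorem~\ref{thm:eb-walk} does --- is a Cram\'er-type large deviation bound: decompose on the value $-k$ of the running minimum and on its first spatial location $m_*$, and observe that from the minimum point $(m,-k)$ the walk must remain in $\bbH_-$ for a constant fraction of the next $k$ steps up to an $e^{-ck}$-rare event (cf.~\eqref{eq:c-star-bound}), yielding a penalty $e^{-c\sqrt k}$ relative to the hard-constraint probability (the loss from $k$ to $\sqrt k$ is a technical convenience to treat $k>\sqrt n$ uniformly, see the footnote there). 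Summing over $k$ and invoking Theorem~\ref{thm:tpf} for the hard-constraint probability then delivers the $c(1+u)(1+v)/m^{3/2}$ bound. Your decomposition ``at the last visit to $\Z_+$'' could be made to work, but only after replacing the invariance-principle heuristic by the Cram\'er argument: you need a tail bound on the local time (deep excursions are either long or exponentially rare), not an estimate on its typical value.
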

The proof of Lemma~\ref{lem:HalfSpace_Connexions_UB} 
 relies on effective random walk estimates and it is relegated to Subsection~\ref{sub:le-proof}. 

\subsection{A Rough Lower Bound}

\begin{lemma}
	\label{lem:LB_HalfSpaceToFullSpace}
	For any \(u,v\in \Lambda_+\),
	\begin{equation*}
	\FKlaw_{\Lambda_+}( u \leftrightarrow v )
	\geq
    \FKlaw\bigl( C_{u} \subset \Lambda_+,\, u \leftrightarrow v \bigr).
	\end{equation*}
\end{lemma}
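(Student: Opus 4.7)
The plan is to pass to a finite-volume approximation and decompose according to the cluster of $u$. Writing $\FKlaw = \lim_{N_0 \to \infty} \FKlaw_{B_{N_0}}$, where $B_{N_0} = \{-N_0,\dots,N_0\}^2 \supset \Lambda$ and $\FKlaw_{B_{N_0}}$ is the random-cluster measure on $B_{N_0}$ with free boundary conditions, it suffices to prove, for every sufficiently large $N_0$,
\begin{equation*}
  \FKlaw_{B_{N_0}}( u \leftrightarrow v,\ C_u \subset \Lambda_+) \;\leq\; \FKlaw_{\Lambda_+}(u \leftrightarrow v),
\end{equation*}
and then to let $N_0 \to \infty$. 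First, I would decompose both sides according to the value $\mathcal{C}$ of $C_u$ (ranging over finite vertex sets contained in $\Lambda_+$ and containing both $u$ and $v$); the problem then reduces to the termwise bound $\FKlaw_{B_{N_0}}(C_u = \mathcal{C}) \leq \FKlaw_{\Lambda_+}(C_u = \mathcal{C})$.

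Next, I would invoke the standard FK cluster identity: for any finite graph $G$ and any finite vertex set $\mathcal{C}$ containing $u$,
\begin{equation*}
  \FKlaw_G(C_u = \mathcal{C}) \;=\; \frac{q\, Z_{\mathrm{conn}}(\mathcal{C})\, Z_{G\setminus\mathcal{C}}}{Z_G},
\end{equation*}
where $Z_{\mathrm{conn}}(\mathcal{C})$ is the weighted sum over configurations on the edges with both endpoints in $\mathcal{C}$ that make $\mathcal{C}$ connected (independent of $G$), and $Z_{G\setminus\mathcal{C}}$ is the FK partition function of the graph obtained from $G$ by deleting $\mathcal{C}$ together with every edge incident to $\mathcal{C}$. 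The factor $Z_{\mathrm{conn}}(\mathcal{C})$ cancels between the two sides, and the inequality reduces to $Z_{B_{N_0}\setminus\mathcal{C}}/Z_{B_{N_0}} \leq Z_{\Lambda_+\setminus\mathcal{C}}/Z_{\Lambda_+}$. A direct unfolding shows that each such ratio equals $q^{-|\mathcal{C}|}$ times the probability, under the corresponding measure, that no edge of the ambient box incident to $\mathcal{C}$ is open. Denoting these decreasing events by $A^{N_0}$ and $A^{+}$ respectively, the problem becomes
\begin{equation*}
  \FKlaw_{B_{N_0}}(A^{N_0}) \;\leq\; \FKlaw_{\Lambda_+}(A^+).
\end{equation*}

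This last bound is standard. Since $A^{N_0} \subseteq A^+$ (more edges are required to be closed on the left), we get $\FKlaw_{B_{N_0}}(A^{N_0}) \leq \FKlaw_{B_{N_0}}(A^+)$; and since $A^+$ is a decreasing event measurable with respect to the edges of $\Lambda_+$, the fact that $\FKlaw_{B_{N_0}}$ restricted to those edges stochastically dominates $\FKlaw_{\Lambda_+}$ (integrating out the outside configuration produces wired-type boundary conditions on $\Lambda_+$, which only enhance connectivity) yields $\FKlaw_{B_{N_0}}(A^+) \leq \FKlaw_{\Lambda_+}(A^+)$, closing the argument. I do not anticipate any genuine obstacle: every step is a routine manipulation of the FK formalism, and the only mildly creative ingredient is the cluster-by-cluster decomposition, which converts the original inequality (whose event is neither monotone nor measurable with respect to $E_{\Lambda_+}$ alone) into one about the ``isolation of $\mathcal{C}$,'' where FK monotonicity applies in the correct direction.
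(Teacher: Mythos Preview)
Your proposal is correct and follows essentially the same route as the paper: decompose according to the cluster of $u$, factorize $\FKlaw_G(C_u=\mathcal{C})$ via the domain Markov property, and compare the ``boundary closed'' probabilities using FKG together with the stochastic domination $\FKlaw_{B_{N_0}}\succcurlyeq\FKlaw_{\Lambda_+}$. The paper's version is more compact---it writes directly $\FKlaw_{\Lambda_+}(C_u=C)=\FKlaw_C(C\text{ open})\,\FKlaw_{\Lambda_+}(\EdgeBnd C\text{ closed})$ and invokes $\FKlaw_{\Lambda_+}(\EdgeBnd C\text{ closed})\geq\FKlaw(\EdgeBnd C\text{ closed})$ in one line---whereas you unfold the same factorization through the partition-function identity and an explicit finite-volume limit; but the content is identical.
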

\begin{proof}
	\begin{align*}
	\FKlaw_{\Lambda_+}( u \leftrightarrow v )
    &=
    \sum_{\substack{ C\subset\Lambda_+ \\ C\ni u,v}} \FKlaw_{\Lambda_+}\bigl( C_{u}= C \bigr)\\
	&=
    \sum_{ C\ni u,v} \IF{C\subset \Lambda_+} \FKlaw_{C}\bigl( C\ \open \bigr)\FKlaw_{\Lambda_+}\bigl( \EdgeBnd C\ \close \bigr)\\
	&\geq
    \sum_{C\ni u,v} \IF{C\subset \Lambda_+} \FKlaw_{C}\bigl( C\ \open \bigr)\FKlaw\bigl( \EdgeBnd C\ \close \bigr)\\
	&=
    \FKlaw\bigl( C_{u} \subset \Lambda_+,\, u \leftrightarrow v \bigr),
	\end{align*}
	where the sums are over $C$ connected and the inequality is an application of FKG.
\end{proof}

From this inequality and Theorem~\ref{thm:OZ_main}, one can deduce the following
\begin{lemma}
	\label{lem:NonSharpLB}
	There exists a constant \(c>0\) such that, for all \(N>0\),
	\begin{equation}
	\label{eq:NonSharpLB}
	\FKlaw_{\Lambda_+}( v_L \leftrightarrow v_R ) \geq c\, N^{-3/2} e^{-2\tau N} .
	\end{equation}
\end{lemma}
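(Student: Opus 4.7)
The approach combines Lemma~\ref{lem:LB_HalfSpaceToFullSpace} with the Ornstein--Zernike representation of Theorem~\ref{thm:OZ_main}. By Lemma~\ref{lem:LB_HalfSpaceToFullSpace}, it suffices to prove
\[
\FKlaw\bigl(C_{v_L} \subset \Lambda_+,\, v_L \leftrightarrow v_R\bigr) \geq c\, N^{-3/2} e^{-2\tau N}.
\]
After translating so that $v_L = 0$ and taking $x = (2N,0)$, $f(C) = \IF{C \subset \Lambda_+ - v_L}$ in Theorem~\ref{thm:OZ_main}, the quantity $e^{2\tau N}\FKlaw\bigl(f(C_{0,x})\IF{0\leftrightarrow x}\bigr)$ equals, up to a negligible $O(e^{-cN})$ error, a positive sum over admissible irreducible decompositions $\gamma = \gamma_L \concatenate \gamma_1 \concatenate \cdots \concatenate \gamma_M \concatenate \gamma_R$ with $\displace(\gamma) = x$ and $\gamma \subset \Lambda_+ - v_L$, weighted by $\rho_L(\gamma_L)\rho_R(\gamma_R)\prod_i \bfp(\gamma_i)$.

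I would freeze $(\gamma_L, \gamma_R)$ at a specific pair in the supports of $\rho_L, \rho_R$ whose ranges sit strictly in the upper half-plane (contributing a strictly positive constant), and view the $\gamma_i$ as i.i.d.\ increments of a random walk whose horizontal mean $\mu := \bfp(\theta)>0$ is strictly positive, whose vertical mean vanishes by symmetry, and which has finite exponential moments via~\eqref{eq:OZ_expDec_steps}. The required lower bound then reduces to the $\bfp^{\otimes M}$-probability, summed over $M$ in a window of size $O(\sqrt{N})$ around $\lfloor 2N/\mu\rfloor$, that (a) the walk has total displacement $\displace(\gamma_1\concatenate\cdots\concatenate\gamma_M) = x - \displace(\gamma_L) - \displace(\gamma_R)$; (b) its vertical partial sums $S_k$ satisfy $h \leq S_k \leq N-h$ for all $k$ and some large constant $h$; and (c) $\operatorname{diam}(\gamma_i) \leq h/2$ for all $i$, so that the whole cluster (and not merely the cone-points) sits inside the half-strip.

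Estimating this probability is the main random-walk computation. For each admissible $M$, the two-dimensional local CLT gives an unconditioned bridge probability of order $M^{-1}$; conditioning on being a vertical bridge of length $M$, the classical ballot/cycle-lemma (or Kemperman) estimate yields the vertical positivity constraint with additional probability $\Theta(M^{-1})$, the upper wall $N-h$ being automatic since the vertical walk has diffusive fluctuations $O(\sqrt{N})$. Summing over the $\Theta(\sqrt{N})$ admissible values of $M$ gives $\sqrt{N}\cdot N^{-2} = N^{-3/2}$, while constraint (c) restricts each of the $M$ independent steps to an event of $\bfp$-probability bounded below uniformly (by~\eqref{eq:OZ_expDec_steps}), contributing only an overall multiplicative constant. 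I expect the main conceptual obstacle to be reconciling the random-walk description, which tracks only cone-points, with the requirement that the \emph{entire} cluster lie in $\Lambda_+$; this is resolved by combining (b) and (c), choosing $h$ large enough that any piece anchored at height $\geq h$ with diameter $\leq h/2$ is guaranteed to stay within the strip.
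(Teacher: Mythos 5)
Your high-level plan — reduce via Lemma~\ref{lem:LB_HalfSpaceToFullSpace} to a full-space lower bound, then exploit the Ornstein--Zernike decomposition of Theorem~\ref{thm:OZ_main} — matches the paper's strategy in spirit. But there is a genuine gap in the way you handle the passage from ``cone-points stay in $\Lambda_+$'' to ``the whole cluster stays in $\Lambda_+$''. Your constraint (c) requires \emph{every} irreducible piece $\gamma_1,\dots,\gamma_M$ to satisfy $\operatorname{diam}(\gamma_i)\leq h/2$ for a fixed constant $h$. Under $\bfp^{\otimes M}$ this event has probability $q_0^M$ with $q_0=\bfp(\operatorname{diam}\leq h/2)<1$ strictly, and since $M\asymp N$ this factor is $e^{-cN}$, not an ``overall multiplicative constant''. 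Inserted into your computation it destroys the polynomial rate: you get $N^{-3/2}e^{-cN}e^{-2\tau N}$, which is useless. Letting $h=h(N)\to\infty$ does not fix this without further ideas: to make $q_0^M$ bounded away from zero you would need $h\gtrsim\log N$, but then your lower wall constraint (b) $\sfS_k\geq h$ must start from $u\geq h$, and the ballot-type estimate for staying above a growing level while starting and ending there needs a more careful (and different) analysis.

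The paper circumvents this by \emph{not} constraining all pieces to be small a priori. Writing $\FKlaw_{\Lambda_+}(v_L\leftrightarrow v_R)\geq\FKlaw(C_{v_L}\subset\Lambda_+\given v_L\leftrightarrow v_R)\FKlaw(v_L\leftrightarrow v_R)$, the prefactor $\FKlaw(v_L\leftrightarrow v_R)\asymp N^{-1/2}e^{-2\tau N}$ comes from the full-space OZ asymptotics, and the conditional probability is bounded below by a product ${\mathbf P}_\sfa(H_{\bbH_-}>H_{\sfb}\given H_{\sfb}<\infty)\cdot{\mathbf P}_\sfa(\sfS_k>\theta_k\;\forall k\given H_{\bbH_-}>H_{\sfb})$ after forcing a short deterministic corner construction. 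The first factor is $\asymp N^{-1}$ by \eqref{eq:tpf} and the local CLT. The second factor, which is the analogue of your step (c), is proven to be bounded below by a positive constant --- the point is that one conditions on the effective walk staying positive, so the walk is at diffusive height most of the time, and the \emph{expected number} of steps $k$ at which the diamond width $\theta_k$ can exceed the current height $\sfS_k$ is summable thanks to the exponential tails of $\theta_k$. This is why it produces a constant and not an exponentially small factor. Your argument needs to be replaced by this (or an equivalent) second-moment/expected-number-of-failures estimate on the event $\{\sfS_k>\theta_k\;\forall k\}$ conditioned on positivity, rather than on a worst-case a priori restriction of all the pieces.
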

The proof of Lemma~\ref{lem:NonSharpLB} also relies on effective random walk estimates and it is relegated to Subsection~\ref{sub:NonSharpLB-prf}. 

\subsection{Bootstrapping}

We start by proving a BK-type inequality for a certain type of events.
\begin{lemma}
	\label{lem:BK_like_inequality}
	Let \(G=(V_G,E_G)\) be a graph and let \(F=(V_F,E_F)\) be a finite subgraph of \(G\). Let \(\eta\in\{0,1\}^{E_G}\). Denote {\(\FKlaw_{F}^{\eta}\)} 
	the random-cluster measure on \(E_F\) with edge weight \(e^{\beta}-1\geq 0\), cluster weight \(q\geq 1\) and boundary condition \(\eta\). For \(u,v\in V_F\) and \(e\in E_F\), denote \(A_e(u,v)\) the event that there exists an open path from \(u\) to \(v\) not using \(e\).	
	Then, for any \(e=\{i,j\}\in E_F\) and any \(x,y\in V_F\),
	\begin{equation}
	\label{eq:BK_like_inequality}
	\FKlaw_F^{\eta}\bigl( A_e(x,i),A_e(j,y),\, \omega_e=1,\, e\in\Piv(x\leftrightarrow y) \bigr)
    \leq
    e^{\beta} \FKlaw_F^{\eta}\bigl(x\leftrightarrow i \bigr)\FKlaw_F^{\eta}\bigl( i\leftrightarrow y \bigr).
	\end{equation}
\end{lemma}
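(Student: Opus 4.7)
My plan is to prove~\eqref{eq:BK_like_inequality} in three steps: (i) use finite energy on the edge $e$ to convert the ``open and pivotal'' condition into an identity relating the target event to a disjoint-connection event, (ii) bound the latter by a cluster decomposition combined with the domain Markov property and FKG, and (iii) apply finite energy one more time to replace the vertex $j$ by $i$ on the right-hand side.

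Set $P := \FKlaw_F^\eta$, denote by $\mathcal{E}$ the event on the left of~\eqref{eq:BK_like_inequality}, and let
\begin{equation*}
T' := \{\omega : x \leftrightarrow i,\ j \leftrightarrow y,\ x \nleftrightarrow y\}.
\end{equation*}
Closing $e$ defines a bijection $\omega \mapsto \omega^{(e)}$ from $\mathcal{E}$ to $T'$: indeed, any $\omega \in T'$ must satisfy $\omega_e = 0$, since otherwise $\omega_e = 1$ would force $x \leftrightarrow y$ through $e$. On $\mathcal{E}$, the pivotality of $e$ combined with $\omega_e = 1$ forces $i \nleftrightarrow j$ in $\omega \setminus \{e\}$, so that closing $e$ increases the number of clusters by exactly one. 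The FK weight ratio then gives $P(\omega) = \tfrac{e^\beta - 1}{q}\, P(\omega^{(e)})$ for every $\omega \in \mathcal{E}$, and summing yields the identity
\begin{equation*}
P(\mathcal{E}) = \frac{e^\beta - 1}{q}\, P(T').
\end{equation*}

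To bound $P(T')$ I decompose over the cluster $L := C_x$, which on $T'$ must be a connected set with $x, i \in L$ and $j, y \notin L$. Writing $\mathcal{E}_L^{\mathrm{in}}$ for the edges of $E_F$ internal to $L$ and $\partial^{\mathrm{in}} L$ for the edges of $E_F$ with exactly one endpoint in $L$, the event $\{C_x = L\}$ equals $\{\omega|_{\mathcal{E}_L^{\mathrm{in}}} \text{ spans } L\} \cap \{\omega|_{\partial^{\mathrm{in}} L} = 0\}$. Conditioning on the decreasing event $\{\omega|_{\partial^{\mathrm{in}} L} = 0\}$ and using the domain Markov property decouples the configurations on $\mathcal{E}_L^{\mathrm{in}}$ and $\mathcal{E}_L^{\mathrm{out}}$, hence
$P(j \leftrightarrow y \mid C_x = L) = P(j \leftrightarrow y \mid \omega|_{\partial^{\mathrm{in}} L} = 0)$,
and FKG applied to the increasing event $\{j \leftrightarrow y\}$ and the decreasing event $\{\omega|_{\partial^{\mathrm{in}} L} = 0\}$ yields $P(j \leftrightarrow y \mid \omega|_{\partial^{\mathrm{in}} L} = 0) \leq P(j \leftrightarrow y)$. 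Summing over admissible $L$ and recognizing $\sum_{L \ni x, i} P(C_x = L) \leq P(x \leftrightarrow i)$ produces $P(T') \leq P(x \leftrightarrow i)\, P(j \leftrightarrow y)$.

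Finally, since $\{\omega_e = 1\} \cap \{j \leftrightarrow y\} \subseteq \{i \leftrightarrow y\}$ and the FK finite-energy lower bound $P(\omega_e = 1 \mid \omega|_{E_F \setminus \{e\}}) \geq (e^\beta - 1)/(e^\beta - 1 + q)$ implies, by a weighted-average argument, $P(j \leftrightarrow y, \omega_e = 1) \geq \tfrac{e^\beta - 1}{e^\beta - 1 + q}\, P(j \leftrightarrow y)$, we obtain $P(j \leftrightarrow y) \leq \tfrac{e^\beta - 1 + q}{e^\beta - 1}\, P(i \leftrightarrow y)$. Chaining the three steps and using the elementary inequality $(e^\beta - 1 + q) \leq q\, e^\beta$ (valid for all $q \geq 1$ and $\beta \geq 0$) yields
\begin{equation*}
P(\mathcal{E}) \leq \frac{e^\beta - 1 + q}{q}\, P(x \leftrightarrow i)\, P(i \leftrightarrow y) \leq e^\beta\, P(x \leftrightarrow i)\, P(i \leftrightarrow y).
\end{equation*}
The main delicate point is the Markov--FKG step under a nontrivial boundary condition $\eta$: the decoupling of inner and outer configurations after conditioning on $\{\omega|_{\partial^{\mathrm{in}} L} = 0\}$ is immediate for free $\eta$, and for general $\eta$ the inequality $P(j \leftrightarrow y \mid C_x = L) \leq P(j \leftrightarrow y)$ still follows by tracking how the external wiring of $\eta$ traverses $L$ and invoking stochastic monotonicity of the random-cluster measure in the boundary condition.
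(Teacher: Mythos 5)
Your proof is correct and follows essentially the same three-step strategy as the paper's: (i) close the pivotal edge $e$ to pass to the disjoint-connection event at cost $(e^\beta-1)/q$, (ii) decompose over the cluster of $x$, decouple via the domain Markov property, and apply FKG to bound $\FKlaw_F^\eta(j\leftrightarrow y\mid\partial^{\mathrm{in}}L\text{ closed})\leq\FKlaw_F^\eta(j\leftrightarrow y)$, and (iii) use finite energy on $e=\{i,j\}$ to replace $j$ by $i$. The only substantive stylistic difference is in step (iii): the paper multiplies and divides by $\FKlaw_F^\eta(\omega_e=1)$, uses FKG for the two increasing events, then lower-bounds $\FKlaw_F^\eta(\omega_e=1)$ by finite energy, whereas you argue directly that $\FKlaw_F^\eta(j\leftrightarrow y,\omega_e=1)\geq\tfrac{e^\beta-1}{e^\beta-1+q}\FKlaw_F^\eta(j\leftrightarrow y)$ via a conditional weighted average, which works because $\{j\leftrightarrow y\}$ is increasing; both routes produce the same constant.

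One remark on your closing caveat. You are right to single out the domain-Markov decoupling as the delicate point when $\eta$ is not free: if $\eta$ wires a vertex of $L$ to a vertex of $V_F\setminus L$, then conditioning on $\{C_x=L\}$ changes the effective boundary condition on the exterior in a way that depends on the internal configuration of $L$, and the resulting boundary condition is \emph{not} stochastically comparable to $\eta$ (it opens edges inside $L$ and closes $\partial^{\mathrm{in}}L$, moving in opposite directions), so ``stochastic monotonicity in the boundary condition'' alone does not close the gap. The paper's proof has exactly the same implicit assumption, and in the application of this lemma (to $\FKlaw_{\Lambda_+}$, free boundary) the issue does not arise, so this does not affect the validity of the argument in context; but your proposed fix should not be taken at face value for arbitrary $\eta$.
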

\begin{proof}
	First notice that
	\begin{multline}
	\label{eq:BK_eq1}
	\FKlaw_F^{\eta}\bigl( A_e(x,i),\, A_e(j,y),\, \omega_e=1,\, e\in\Piv(x\leftrightarrow y) \bigr)=\\
    =
    \frac{e^{\beta}-1}{q} \FKlaw_F^{\eta}\bigl( A_e(x,i),\, A_e(j,y),\, \omega_e=0,\, e\in\Piv(x\leftrightarrow y) \bigr).
	\end{multline}
	Summing over the possible realizations of the cluster of \(x\) and \(i\),
	\begin{align*}
	\FKlaw_F^{\eta}\bigl( A_e(x,i),&\, A_e(j,y),\, \omega_e=0,\, e\in\Piv(x\leftrightarrow y) \bigr)=\\
	&=
    \sum_{\substack{C\ni x,i,\, C\not\ni j,y \\ \EdgeBnd C\ni e}} \FKlaw_F^{\eta}\bigl( C \open,\, \EdgeBnd C \close\bigr) \FKlaw_F^{\eta}\bigl( j\leftrightarrow y \bgiven \EdgeBnd C \close\bigr)\\
	&\leq
    \FKlaw_F^{\eta}( j\leftrightarrow y ) \sum_{C\ni x,i} \FKlaw_F^{\eta}\bigl( C \open,\, \EdgeBnd C \close\bigr)\\
	&=
    \frac{\FKlaw_F^{\eta}( j\leftrightarrow y ) \FKlaw_F^{\eta}( \omega_e=1 )}{\FKlaw_F^{\eta}( \omega_e=1 )} \FKlaw_F^{\eta}( i\leftrightarrow x )
    \leq \frac{e^{\beta}-1+q}{e^{\beta}-1} \FKlaw_F^{\eta}( i\leftrightarrow y ) \FKlaw_F^{\eta}( i\leftrightarrow x ).
	\end{align*}
    The first inequality is FKG and the second is FKG and finite energy (that is, the fact that the probability for an edge to be open, conditionnally on all the other edges, is uniformly bounded away from \(0\) and \(1\)). Plugging this into~\eqref{eq:BK_eq1} yields the result.
\end{proof}

This Lemma will prove useful as cone-points events imply the events in the left-hand side of~\eqref{eq:BK_like_inequality}. First, by~\eqref{eq:HalfSpace_CPts_spacing} and the definition of \(\fcone\), we have
\begin{equation}
\label{eq:HaudorffDist_Cluster_to_CPts}
	\FKlaw_{\Lambda_+}\bigl( \HausdorffDist(C_{v_L,v_R},\, \CPts(C_{v_L,v_R}))\leq (\log N)^2 \bgiven v_L\leftrightarrow v_R \bigr) \xrightarrow{N\to\infty} 1,
\end{equation}
where \(\HausdorffDist\) denotes the Hausdorff distance.
Moreover, this convergence is super-polynomial (the error decays faster than any negative power of \(N\)). 

Let \(\epsilon>0\), define
\begin{gather}
\label{eq:Delta}
	\Delta\equiv\Delta(N,\epsilon) = [-N+2N^{8\epsilon}, N-2N^{8\epsilon}]\times [0,N^{\epsilon}],\\
	\nonumber
	\tilde{\Delta} \equiv \tilde{\Delta} (N,\epsilon) = [-N+N^{8\epsilon}, N-N^{8\epsilon}]\times [0,2N^{\epsilon}].
\end{gather}

\begin{figure}[h]
	\centering
	\includegraphics[scale=0.8]{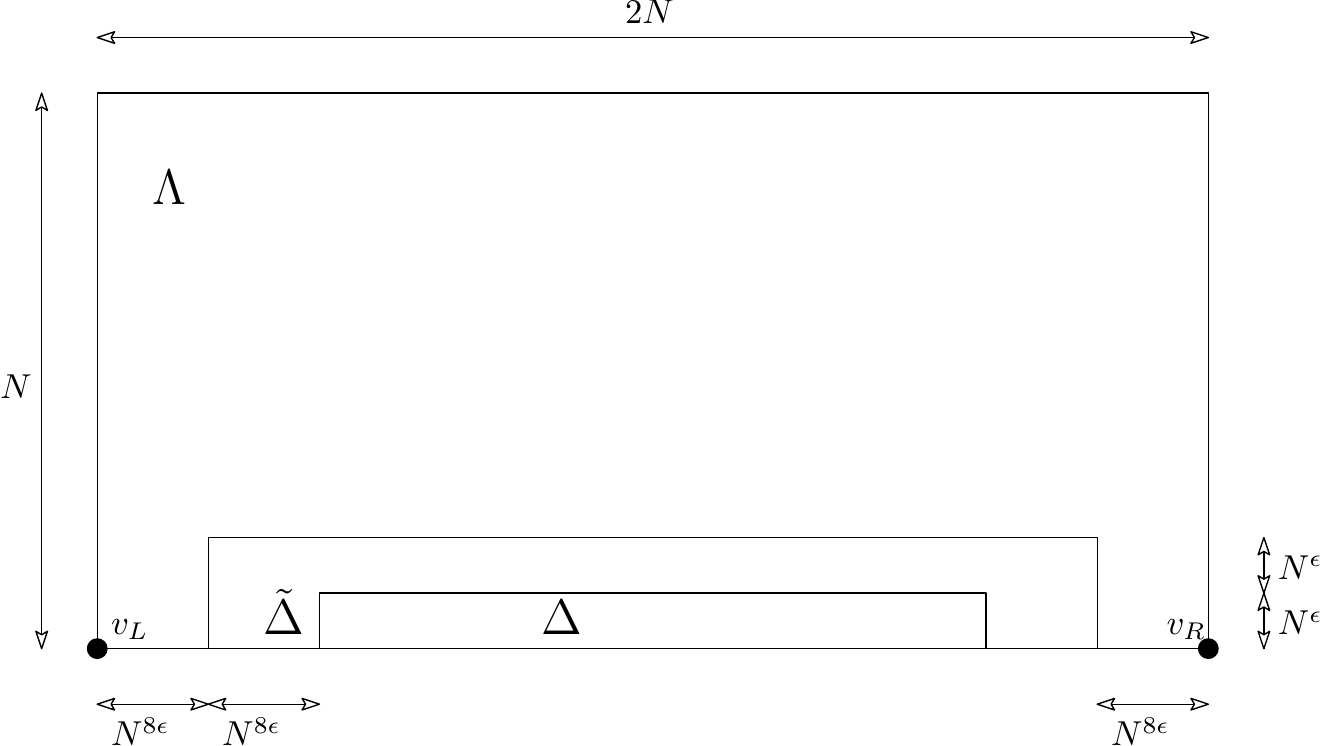}
\end{figure}

\begin{lemma}
	\label{lem:EntropicRepulsion}
	For any \(\epsilon\in(0,1/8) \), there exists \(C\geq 0\) such that
	\begin{equation}
	\label{eq:EntropicRepulsion}
		\FKlaw_{\Lambda_+} \bigl( C_{v_{ L},v_R}\cap \Delta \neq \varnothing \bgiven v_L\leftrightarrow v_R \bigr) \leq C N^{-\epsilon}.
	\end{equation}
\end{lemma}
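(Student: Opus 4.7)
The plan is to show that if $C_{v_L, v_R}$ visits $\Delta$, then up to a super-polynomially rare event it must contain a cone-point in the slightly larger strip $\tilde{\Delta}$; the probability of such a cone-point is then controlled by a BK-type factorization combined with the half-space two-point estimates of Lemmas~\ref{lem:HalfSpace_Connexions_UB} and~\ref{lem:NonSharpLB}.

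First, by~\eqref{eq:HaudorffDist_Cluster_to_CPts}, up to an event of $\FKlaw_{\Lambda_+}(\,\cdot \given v_L \leftrightarrow v_R)$-probability at most $N^{-c \log N}$, every vertex of $C_{v_L,v_R}$ lies within Euclidean distance $(\log N)^2$ of a cone-point. Since $N^{8\epsilon}\geq (\log N)^2$ and $N^\epsilon \geq (\log N)^2$ for $N$ large, on this event the occurrence of $\{C_{v_L, v_R}\cap \Delta \neq \varnothing\}$ forces a cone-point $u = (x, y) \in \tilde{\Delta}$. Next, I claim that if $u \in \CPts(C_{v_L, v_R})$, the edges $e_L = \{u-\eone, u\}$ and $e_R = \{u, u+\eone\}$ are both open and pivotal for $\{v_L \leftrightarrow v_R\}$: the defining inclusion $C_{v_L, v_R}\subset u+(\bcone \cup \fcone)$ forces the only lattice neighbors of $u$ which can lie in the cluster to be $u\pm\eone$, so $u$'s unique open links into the forward and backward halves of the cluster are exactly $e_R$ and $e_L$, whence both are pivotal and the events $A_{e_R}(v_L, u)$, $A_{e_R}(u+\eone, v_R)$ hold through the backward and forward halves respectively. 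Lemma~\ref{lem:BK_like_inequality} applied with $e=e_R$, $i=u$, $j=u+\eone$, $x=v_L$, $y=v_R$ then yields
\begin{equation*}
\FKlaw_{\Lambda_+}\bigl(u \in \CPts(C_{v_L, v_R}),\, v_L\leftrightarrow v_R\bigr) \leq e^{\beta}\, \FKlaw_{\Lambda_+}(v_L \leftrightarrow u)\, \FKlaw_{\Lambda_+}(u \leftrightarrow v_R).
\end{equation*}

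For $u = (x, y) \in \tilde{\Delta}$, the hypothesis $\epsilon < 1/8$ gives $y \leq 2 N^\epsilon \leq N^{4\epsilon} \leq \min\bigl(\sqrt{x+N}, \sqrt{N-x}\bigr)$, so Lemma~\ref{lem:HalfSpace_Connexions_UB} is applicable to each factor and produces
\begin{equation*}
\FKlaw_{\Lambda_+}(v_L \leftrightarrow u)\, \FKlaw_{\Lambda_+}(u \leftrightarrow v_R) \leq C\, \frac{(1+y)^2\, e^{-2\tau N}}{\bigl((x+N)(N-x)\bigr)^{3/2}}.
\end{equation*}
Summing $(1+y)^2$ over $y \in [0, 2N^\epsilon]$ gives at most $C N^{3\epsilon}$, while $\sum_x ((x+N)(N-x))^{-3/2}$ over $x\in [-N+N^{8\epsilon}, N-N^{8\epsilon}]$ is dominated by its boundary contributions and is $\leq C N^{-3/2-4\epsilon}$. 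Combining these estimates and dividing by the lower bound $\FKlaw_{\Lambda_+}(v_L \leftrightarrow v_R)\geq c N^{-3/2} e^{-2\tau N}$ from Lemma~\ref{lem:NonSharpLB} produces a conditional probability of order $N^{-\epsilon}$, the reduction error from the first step being negligible in comparison.

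The main conceptual point is the cone-point-to-pivotal-edge translation used to invoke Lemma~\ref{lem:BK_like_inequality}: one must verify that the rigid local geometry at a cone-point $u$ produces the full event $\{A_{e_R}(v_L, u),\, A_{e_R}(u+\eone, v_R),\, \omega_{e_R}=1,\, e_R \in \Piv(v_L \leftrightarrow v_R)\}$. This is a short combinatorial check but it is what converts the global statement ``$u$ lies on the open cluster'' into a multiplicative bound that decouples the left and right halves, from which point everything reduces to combining previously established estimates.
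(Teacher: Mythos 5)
Your proposal is correct and follows essentially the same route as the paper's own proof: reduce via~\eqref{eq:HaudorffDist_Cluster_to_CPts} to the presence of a cone-point in $\tilde\Delta$, translate a cone-point at $u$ into the pivotality of the horizontal edges at $u$, invoke Lemma~\ref{lem:BK_like_inequality} to decouple the left and right halves, and then close with Lemmas~\ref{lem:HalfSpace_Connexions_UB} and~\ref{lem:NonSharpLB} and a union bound over $\tilde\Delta$. Your spelled-out verification that a cone-point forces $e_L,e_R$ to be open and pivotal is precisely the parenthetical observation the paper relies on, and your summation over $\tilde\Delta$ reproduces the paper's $C N^{3\epsilon} N^{-4\epsilon}$ bound.
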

\begin{proof}
	By~\eqref{eq:HaudorffDist_Cluster_to_CPts}, we can suppose that \(\HausdorffDist(C_{v_L,v_R}, \CPts(C_{v_L,v_R}))\leq (\log N)^2\). Under this event, \(\{C_{v_L,v_R}\cap \Delta\neq \varnothing \}\) implies \(\{\CPts(C_{v_L,v_R}) \cap \tilde{\Delta}\neq \varnothing\}\) (for \(N\) large enough). By a union bound, the probability of the latter is bounded from above by
	\begin{align}
	\label{eq:AboveDelta}
		\FKlaw_{\Lambda_+} \bigl( \CPts(C_{v_L,v_R}) \cap \tilde{\Delta}\neq \varnothing &\bgiven v_L\leftrightarrow v_R \bigr)\leq\\
		\nonumber
        &\leq
        \sum_{u\in\tilde{\Delta}} \frac{\FKlaw_{\Lambda_+} \bigl( u\in \CPts(C_{v_L,v_R}),\, v_L\leftrightarrow v_R \bigr)}{\FKlaw_{\Lambda_+} \bigl(v_L\leftrightarrow v_R \bigr)}\\
        \nonumber
		&\leq
        C e^{2\tau N}{N}^{3/2} \sum_{u\in\tilde{\Delta}} e^{\beta} \FKlaw_{\Lambda_+} \bigl( v_L\leftrightarrow u \bigr) \FKlaw_{\Lambda_+} \bigl( u\leftrightarrow v_R \bigr)\\
        \nonumber
		&\leq
        C {N}^{3/2} \sum_{k=N^{8\epsilon}}^{2N-N^{8\epsilon}} \sum_{l=0}^{2N^{\epsilon}} (1+l)^{2} k^{-3/2} (2N-k)^{-3/2}\\
        \nonumber
		&\leq
        C N^{3\epsilon} \sum_{k=N^{8\epsilon}}^{N/2} k^{-3/2}
        \leq
        C N^{3\epsilon} N^{- 4\epsilon}
        \xrightarrow{N\to\infty} 0,
	\end{align}
	where the first line follows from a union bound, the second one from~\eqref{eq:BK_like_inequality} {(since by construction if
		\( u\in \CPts(C_{v_L,v_R}) \),  then the bonds  $\langle u -\sfe_1  , u\rangle $ and  $\langle u , u+\sfe_1\rangle $ are  pivotal for $\lbr v_L \leftrightarrow v_R\rbr$) and Lemma~\ref{lem:NonSharpLB}},  and the third one from Lemma~\ref{lem:HalfSpace_Connexions_UB}. By convention the constant  \(C\) is updated at each line.
\end{proof}

\section{Proof of Theorems~\ref{thm:Main}, \ref{thm:Main_Weak_couplings} and~\ref{thm:Main_Line}}
\label{sec:Effective_HS_RW}

We focus on the proof of Theorem~\ref{thm:Main}. The necessary adaptations needed to prove the other two theorems are sketched in Section~\ref{sec:TheOtherTwoThms}.

Throughout this Section we fix $\epsilon \in (0, 1/16)$, which is used to
define the rectangle  $\Delta$ in \eqref{eq:Delta} and, subsequently, shows
up in the statement of the entropic repulsion Lemma~\ref{lem:EntropicRepulsion}. To facilitate notation we set $\delta = 8\epsilon \in (0, 1/2)$. 

\subsection{Reduction to infinite volume quantities}

Consider the irreducible decomposition~\eqref{eq:IRD}. In view of Corollary~\ref{cor:IRD}, we may restrict attention to clusters $C_{v_L , v_R}$ which contain cone-points in any vertical slab of width $(\log N)^2$.
In the sequel, we shall use $\calS_{a,b}$ for the vertical slab through the vertices $(a, 0)$ and $(b ,0)$.  

Let $\sfu_L$ be the left-most cone-point of $C_{v_L , v_R}$ in $\calS_{-N+2N^{\delta}, -N + 2N^\delta + (\log N )^2}$. Similarly, let $\sfu_R$ be the right-most cone-point of $C_{v_L , v_R}$ in $\calS_{N-2N^{\delta} - (\log N )^2 , N - 2N^\delta } $. 
We record $\sfu_L$ and $\sfu_R$ in their coordinate representation as 
\begin{equation}\label{eq:vert} 
	\sfu_L  = (j_L , u_L)\quad \text{ and }\quad \sfu_R = (j_R , u_R) .
\end{equation}
By construction, since $\sfu_L \in v_L +\fcone$ and $\sfu_R \in v_R +\bcone$, the vertical coordinates of $u_L$ and $u_R$ (see~\eqref{eq:vert}) satisfy 
\begin{equation}\label{eq:Heights} 
	u_L , u_R \leq \sqrt{2}\, \bigl( 2N^{\delta} + (\log N)^2 \bigr) .
\end{equation}
Gluing together all the irreducible pieces on the left of $\sfu_L$ and on the right of $\sfu_R$, we may modify~\eqref{eq:IRD} as follows: 
\begin{equation}\label{eq:IRD-points} 
	C_{v_L , v_R} = \eta_L \circ \eta_1\circ  \dots \circ \eta_k \circ \eta_R  = \eta_L\circ \ueta\circ \eta_R, 
\end{equation}
where $\eta_L = C_{v_L,v_R}\cap (\sfu_L + \bcone) \in \SetRootMarkBackCont$, $\eta_R = C_{v_L,v_R}\cap (\sfu_R + \fcone) \in 	\SetRootMarkForwCont$ and 
\begin{equation}\label{eq:eta-conc} 
\ueta\defby 
\eta_1\circ \dots \circ\eta_k = \gamma_{\ell+1}\circ \dots \circ\gamma_{\ell+k} = (\sfu_L + \fcone) \cap C_{v_L,v_R} \cap (\sfu_R +\bcone)  
\end{equation}
is the portion $\gamma_{\ell+1}\circ \dots \circ\gamma_{\ell+k}$ of the concatenation of all	$\SetRootDiaCont^{\textnormal{irr}}$-irreducible pieces located between $\sfu_L$ and $\sfu_R$ 
in the decomposition~\eqref{eq:IRD}. In~\eqref{eq:eta-conc}, we set  $\eta_j = \gamma_{\ell+j}$ for all $j = 1, \dots , k$.  

\begin{figure}[h]
	\label{fig:uLuR}
	\centering
	\includegraphics[scale=0.8]{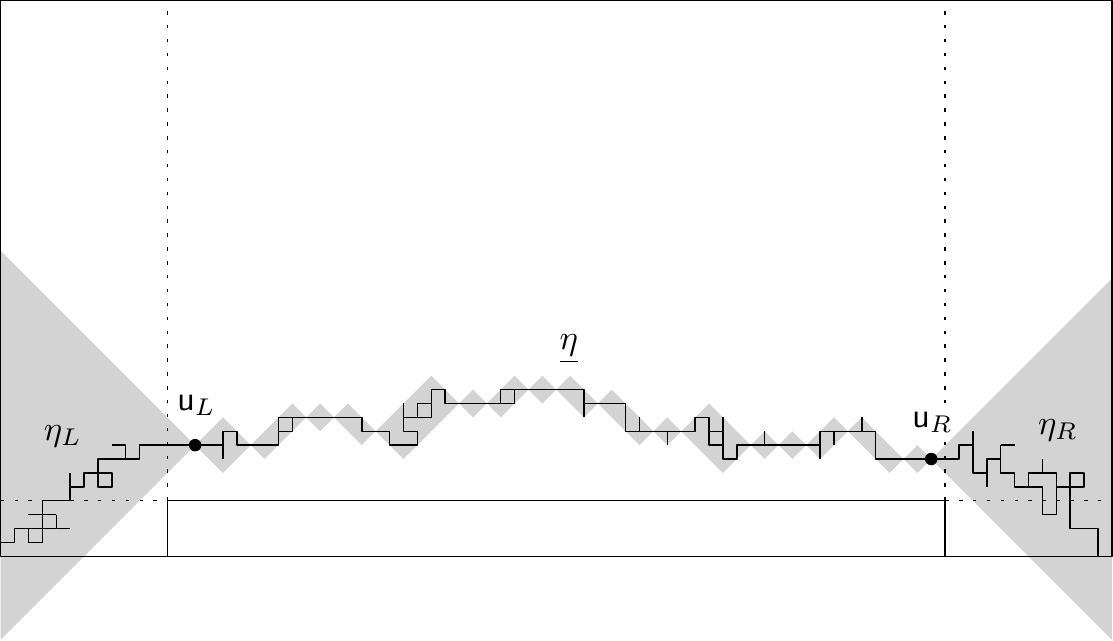}
	\caption{Decomposition of the cluster \(C_{v_L , v_R}\) as a concatenation \(\eta_L\circ \ueta\circ \eta_R\).}
	\label{fig:uLuR}
\end{figure}

By Lemma~\ref{lem:EntropicRepulsion}, we may restrict attention to the case when 
\begin{equation}\label{eq:NoInt} 
(\sfu_L + \ueta) \cap \Delta = \emptyset .
\end{equation}
In light of the above discussion, and with~\eqref{eq:IRD-points} and~\eqref{eq:eta-conc} in mind, 
it is natural do define the following set $\frT_N$: 
\begin{definition} 
\label{def:TN} 
We define \(\frT_N\) as the set of triples $(\eta_L , \eta_R , \ueta)$ (see Figure~\ref{fig:uLuR})
and the corresponding vertices (recall the definition of displacement in~\eqref{eq:displacement})
\[ 
\sfu_L \defby v_L + X(\eta_L),\quad \sfu_R \defby v_R - X(\eta_R) ,  
\] 
in their coordinate representation~\eqref{eq:vert}, such that 
\be{eq:etaH} 
v_L +\eta_L\circ \ueta\circ \eta_R \subset \bbH_+ \quad 
\text{and, furthermore,  \eqref{eq:NoInt} holds} .
\ee
Moreover, 
\begin{equation} 
\label{eq:TN} 
j_L \in [-N, -N +2N^{\delta} + (\log N )^2 ]\quad \text{ and }\quad j_R\in 
[N- 2N^\delta - (\log N )^2 , N] , 
\end{equation}
and $v_L +\eta_L$ and $\sfu_R +\eta_R$ do not have cone-points in the interior of the vertical slabs 
$\calS_{-N +2N^\delta, j_L}$ and $\calS_{j_R, N- 2N^\delta}$.
In addition, $\max_i \operatorname{diam} (\eta_i) \leq (\log N)^2$ and~\eqref{eq:Heights} holds.
\end{definition}
\begin{lemma} 
	\label{lem:C-structure} 
	There exist $c, C \in (0,\infty)$ such that, for all $N$ sufficiently large,
	\begin{equation}\label{eq:C-structure} 
		\FKlaw_{\Lambda_+} (v_L\leftrightarrow v_R) \bigl( 1 - CN^{-c \log N}\bigr)
		\leq 
		\sum_{(\eta_L, \eta_R, \ueta) \in \frT_N} \FKlaw_{\Lambda_+} (\eta_L\circ\eta_1\circ\dots\circ\eta_k\circ\eta_R) .
	\end{equation} 
\end{lemma}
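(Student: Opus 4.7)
The plan is to show that, up to a small-probability event, the open cluster $C_{v_L, v_R}$ under $\FKlaw_{\Lambda_+}(\,\cdot \given v_L \leftrightarrow v_R)$ uniquely determines a triple $(\eta_L, \eta_R, \ueta) \in \frT_N$, and then to recover the right-hand side of~\eqref{eq:C-structure} by summing $\FKlaw_{\Lambda_+}$-probabilities of the disjoint cluster-specification events over such triples. The whole proof is a careful union bound that organizes the three ingredients already at our disposal: the irreducible decomposition (Corollary~\ref{cor:IRD}(1)), the diameter bound on the pieces (Corollary~\ref{cor:IRD}(2)), and the entropic repulsion estimate (Lemma~\ref{lem:EntropicRepulsion}).

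I would first apply Corollary~\ref{cor:IRD}(1)--(2) to restrict, up to an event of conditional probability at most $e^{-cN} + CN^{-c\log N}$, to clusters admitting an irreducible decomposition $\gamma_L\circ\gamma_1\circ\cdots\circ\gamma_K\circ\gamma_R$ with $K \geq \rho N$ pieces, each of Euclidean diameter at most $(\log N)^2$. Since each irreducible piece has displacement bounded by its diameter, consecutive cone-points of $C_{v_L, v_R}$ lie within horizontal distance $(\log N)^2$; as $N^\delta \gg (\log N)^2$ for $N$ large, each of the slabs $\calS_{-N+2N^\delta,\,-N+2N^\delta+(\log N)^2}$ and $\calS_{N-2N^\delta-(\log N)^2,\,N-2N^\delta}$ contains at least one cone-point. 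I set $\sfu_L = (j_L, u_L)$ and $\sfu_R = (j_R, u_R)$ to be the leftmost and rightmost such cone-points and regroup the irreducible pieces to the left of $\sfu_L$ into $\eta_L$, those to the right of $\sfu_R$ into $\eta_R$, and the remaining middle ones into $\ueta = \eta_1\circ\cdots\circ\eta_k$, as in~\eqref{eq:IRD-points}.

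Next I would verify each defining property of $\frT_N$. The range constraints~\eqref{eq:TN} hold by the slab choice; the height bound~\eqref{eq:Heights} follows from $\sfu_L - v_L \in \fcone$, giving $u_L \leq j_L + N \leq 2N^\delta + (\log N)^2$, and symmetrically for $u_R$; the per-piece diameter bound is inherited from Corollary~\ref{cor:IRD}(2); and $v_L + \eta_L\circ\ueta\circ\eta_R \subset \bbH_+$ is automatic since $C_{v_L, v_R} \subset \Lambda_+$. The no-interior-cone-point condition on $\eta_L$ (and symmetrically on $\eta_R$) relies on a small combinatorial observation: any cone-point $v$ of the subgraph $v_L + \eta_L$ is also a cone-point of the full cluster $C_{v_L, v_R}$, because $\sfu_L \in v + \fcone$ and stability of $\fcone$ under addition yield $C_{v_L, v_R} \setminus (v_L + \eta_L) \subset \sfu_L + \fcone \subset v + \fcone$; combined with the leftmost choice of $\sfu_L$ in its slab, this rules out cone-points of $v_L + \eta_L$ in the interior of $\calS_{-N + 2N^\delta, j_L}$. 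Finally, the entropic repulsion condition~\eqref{eq:NoInt} follows from Lemma~\ref{lem:EntropicRepulsion}, since $\sfu_L + \ueta \subset C_{v_L, v_R}$ and the event $\{C_{v_L, v_R}\cap\Delta\neq\emptyset\}$ has conditional probability at most $CN^{-\epsilon}$.

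Uniqueness of the decomposition on the good event — cone-points of $C_{v_L, v_R}$ are intrinsic to the cluster, hence so are $\sfu_L$, $\sfu_R$ and the resulting triple — makes the events $\{C_{v_L, v_R} = v_L + \eta_L\circ\eta_1\circ\cdots\circ\eta_k\circ\eta_R\}$ indexed by $\frT_N$ pairwise disjoint, so summing their $\FKlaw_{\Lambda_+}$-probabilities and applying a union bound to the complementary bad events yields~\eqref{eq:C-structure}, the final error being governed by the slowest of the rates $e^{-cN}$, $N^{-c\log N}$ and $N^{-\epsilon}$. The main conceptual point requiring attention is the verification that the geometric conditions on $\eta_L, \eta_R$ — especially the no-interior-cone-point requirement — are automatic consequences of the extremal construction of $\sfu_L, \sfu_R$ rather than demanding an independent probabilistic estimate; beyond this, the argument is essentially a careful bookkeeping of already-established bounds.
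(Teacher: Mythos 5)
Your proposal reconstructs the proof along exactly the same lines as the paper's (implicit) argument: restrict to the good event where the irreducible decomposition of Corollary~\ref{cor:IRD} exists with all pieces of diameter at most $(\log N)^2$, define $\sfu_L,\sfu_R$ as the extremal cone-points in the two slabs, regroup pieces into $(\eta_L,\ueta,\eta_R)$, impose~\eqref{eq:NoInt} via the entropic repulsion of Lemma~\ref{lem:EntropicRepulsion}, and then use that the map $C_{v_L,v_R}\mapsto(\eta_L,\ueta,\eta_R)$ is well-defined and injective on the good event so that the corresponding cluster events are pairwise disjoint. The verification that the geometric side-conditions of Definition~\ref{def:TN} — the range constraints~\eqref{eq:TN}, the height bound~\eqref{eq:Heights}, the per-piece diameter bound, inclusion in $\bbH_+$, and the no-interior-cone-point condition — all hold automatically on this good event is correct. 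In particular, the combinatorial step for the no-interior-cone-point condition (any cone-point of $v_L+\eta_L$ is a cone-point of $C_{v_L,v_R}$ because $C_{v_L,v_R}\setminus(v_L+\eta_L)\subset\sfu_L+\fcone\subset v+\fcone$ and $\fcone+\fcone\subset\fcone$, then use the leftmost choice of $\sfu_L$) is exactly the right argument.

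The one point you should have flagged explicitly, rather than leaving as an observation, is the mismatch between the error rate your argument delivers and the rate stated in Lemma~\ref{lem:C-structure}. You correctly identify the slowest of the three bad-event rates as $N^{-\epsilon}$ from Lemma~\ref{lem:EntropicRepulsion}, and since $\frT_N$ builds in the constraint~\eqref{eq:NoInt}, this $N^{-\epsilon}$ loss is unavoidable and strictly dominates $N^{-c\log N}$ (indeed $N^{-\epsilon} = e^{-\epsilon\log N} \gg e^{-c(\log N)^2} = N^{-c\log N}$). The rate $N^{-c\log N}$ in the lemma statement therefore cannot be obtained by this argument and appears to be a typographical slip — the achievable and intended error is $O(N^{-\epsilon})$, which is all that is needed downstream in Proposition~\ref{prop:PhiPsi} and the invariance principle. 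Apart from this cosmetic inconsistency with the stated rate, your proof is a faithful and complete rendering of what the paper does.
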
 
Now (see Section~3 in~\cite{Campanino+Ioffe+Velenik-2008}), the events in the right-hand side of~\eqref{eq:C-structure} can be represented as
\begin{equation}\label{eq:Events} 
\{\eta_L \circ \eta_1\circ  \dots \circ \eta_k \circ \eta_R\} = \{v_L + \eta_L\} \cap \{\sfu_L +\ueta\} \cap \{\sfu_R +\eta_R\}.
\end{equation} 
Thus, 
\begin{equation}\label{eq:RationEvents} 
\FKlaw_{\Lambda_+}(\eta_L \circ \eta_1\circ \dots \circ \eta_k \circ \eta_R) = \FKlaw_{\Lambda_+} (\sfu_L +\ueta \given v_L + \eta_L ; \sfu_R +\eta_R) \, \FKlaw_{\Lambda_+}(v_L + \eta_L ; \sfu_R +\eta_R) .
\end{equation}
In view of the sharpness of phase transition proved in~\cite{Duminil-Copin+Manolescu-2016}, the analysis of~\cite[Section~3]{Campanino+Ioffe+Velenik-2008} applies all the way up to the critical temperature. Consequently, by~(3.14) of the latter paper and the restriction~\eqref{eq:NoInt}, there exists $c \in (0, \infty)$ such that 
\begin{equation}\label{eq:VCondPhiBound}  
	\exp{- {\rm e}^{- c N^{\epsilon}}}
	\leq 
	\frac{\FKlaw_{\Lambda_+}(\sfu_L +\ueta \given v_L + \eta_L ; \sfu_R +\eta_R)}{\FKlaw(\sfu_L +\ueta \given v_L + \eta_L ; \sfu_R +\eta_R)}
	\leq
	\exp{ {\rm e}^{-c N^{\epsilon}}}
\end{equation}
for all $N$ sufficiently large, uniformly in $(\eta_L, \eta_R, \ueta) \in \frT_N$. 

Let us define the following regularized measure on $\frT_N$ or, equivalently, on the set of clusters
$C_{v_L, v_R} = \eta_R\circ\ueta\circ\eta_R$ with $(\eta_L, \eta_R, \ueta) \in \frT_N$:
\begin{equation}\label{eq:regPhi} 
	\FKlaw_{\Lambda_+}^{\mathsf{reg}} (\eta_L\circ\ueta\circ \eta_R)
	= 
	\frac{1}{Z_N}\FKlaw (\sfu_L +\ueta \given v_L + \eta_L ; \sfu_R +\eta_R) \, \FKlaw_{\Lambda_+} (v_L + \eta_L ; \sfu_R +\eta_R) ,
\end{equation}
where $Z_N = Z_N(\beta, \epsilon)$ is a normalizing constant.
We have proven
\begin{proposition}
	\label{prop:PhiPsi}
	There exists a coupling $\Psi_N$ between $\FKlaw_{\Lambda_+} (\, \cdot \given v_L\longleftrightarrow v_R)$ 
	(viewed as a probability distribution on the set of clusters $C_{v_L, v_R}$) and the probability distribution $\FKlaw_{\Lambda_+}^{\mathsf{reg}}$ on $\frT_N$ such that, for all $N$ sufficiently large,
	\begin{equation}\label{eq:PhiPsi} 
	\Psi_N\lb C_{v_L , v_R} \neq \eta_L\circ\ueta\circ\eta_R\rb 
	\leq 2C N^{-c\log N} .
	\end{equation}
\end{proposition}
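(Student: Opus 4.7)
The plan is to obtain $\Psi_N$ as the maximal coupling of the two probability measures involved, thereby reducing the estimate on $\Psi_N(C_{v_L,v_R}\neq \eta_L\circ\ueta\circ\eta_R)$ to a total-variation bound. Both measures can be viewed as probability distributions on the set of clusters joining $v_L$ to $v_R$: the conditional random-cluster measure $\FKlaw_{\Lambda_+}(\,\cdot\given v_L\leftrightarrow v_R)$ is naturally defined there, while $\FKlaw_{\Lambda_+}^{\mathsf{reg}}$ is supported on clusters of the form $\eta_L\circ\ueta\circ\eta_R$ with $(\eta_L,\eta_R,\ueta)\in\frT_N$ and is extended by zero elsewhere. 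The two ingredients I would rely on are Lemma~\ref{lem:C-structure}, which controls the mass defect of $\FKlaw_{\Lambda_+}(\,\cdot\given v_L\leftrightarrow v_R)$ outside $\frT_N$, and the ratio estimate~\eqref{eq:VCondPhiBound}, which controls the discrepancy between half-space and full-space conditional probabilities of the middle piece.

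First I would write the density ratio on $\frT_N$ by combining~\eqref{eq:RationEvents} with~\eqref{eq:regPhi}:
\begin{equation*}
\frac{\FKlaw_{\Lambda_+}(\eta_L\circ\ueta\circ\eta_R\given v_L\leftrightarrow v_R)}{\FKlaw_{\Lambda_+}^{\mathsf{reg}}(\eta_L\circ\ueta\circ\eta_R)}
= \frac{Z_N}{\FKlaw_{\Lambda_+}(v_L\leftrightarrow v_R)}\cdot \frac{\FKlaw_{\Lambda_+}(\sfu_L+\ueta\given v_L+\eta_L;\sfu_R+\eta_R)}{\FKlaw(\sfu_L+\ueta\given v_L+\eta_L;\sfu_R+\eta_R)}.
\end{equation*}
By~\eqref{eq:VCondPhiBound}, the second (triple-dependent) factor lies uniformly in $\bigl[\exp(-e^{-cN^\epsilon}),\exp(e^{-cN^\epsilon})\bigr]$ for all $(\eta_L,\eta_R,\ueta)\in\frT_N$. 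Next I would pin down the triple-independent prefactor $Z_N/\FKlaw_{\Lambda_+}(v_L\leftrightarrow v_R)$ by summing the identity over $\frT_N$: since $\FKlaw_{\Lambda_+}^{\mathsf{reg}}(\frT_N)=1$ while Lemma~\ref{lem:C-structure} yields $1-CN^{-c\log N}\leq \FKlaw_{\Lambda_+}(\frT_N\given v_L\leftrightarrow v_R)\leq 1$, I obtain
\[
\frac{Z_N}{\FKlaw_{\Lambda_+}(v_L\leftrightarrow v_R)} \in \bigl[(1-CN^{-c\log N})\exp(-e^{-cN^\epsilon}),\;\exp(e^{-cN^\epsilon})\bigr].
\]
Since $e^{-cN^\epsilon}=o(N^{-c'\log N})$ for every $c'>0$, the full density ratio is therefore $1+O(N^{-c\log N})$ uniformly on $\frT_N$.

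Finally, this uniform pointwise control of densities on $\frT_N$, combined with the mass defect of at most $CN^{-c\log N}$ carried by $\frT_N^c$ under the conditional measure, yields the total-variation bound $\TotVarDist\bigl(\FKlaw_{\Lambda_+}(\,\cdot\given v_L\leftrightarrow v_R),\FKlaw_{\Lambda_+}^{\mathsf{reg}}\bigr)\leq 2CN^{-c\log N}$; the existence of $\Psi_N$ satisfying~\eqref{eq:PhiPsi} then follows from the standard maximal coupling lemma. The only real difficulty is the simultaneous bookkeeping of three small error sources---the missing mass on $\frT_N^c$, the regularization error from replacing $\FKlaw_{\Lambda_+}$ with $\FKlaw$ in the middle conditional probability, and the resulting normalization discrepancy---and the verification that all three are absorbed by the dominant $N^{-c\log N}$ term coming from Lemma~\ref{lem:C-structure}.
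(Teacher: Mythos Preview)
Your proposal is correct and follows essentially the same route as the paper: the paper's ``proof'' consists precisely of the discussion preceding the proposition (Lemma~\ref{lem:C-structure}, the decomposition~\eqref{eq:RationEvents}, the ratio bound~\eqref{eq:VCondPhiBound}, and the definition~\eqref{eq:regPhi}), after which the authors simply write ``We have proven''. You have made explicit the total-variation/maximal-coupling step that the paper leaves to the reader, and your bookkeeping of the three error sources is accurate.
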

From now on, we work only with the regularized measure $\FKlaw_{\Lambda_+}^{\mathsf{reg}}$. 

\subsection{Construction of the effective random walk} 
\label{sub:ERW}

Recall from~\eqref{eq:Delta} the definition of the rectangles $\Delta = \Delta(N,\epsilon)$.  
Let us, first of all, define a modified set of triples $\frT^*_N = (\lambda_L, \ulambda, \lambda_R)$
such that $\lambda_L\in\frB_L, \lambda_R\in\frB_R$ and, in addition, 
\begin{gather*} 
	\ulambda = \lambda_1\circ\dots\circ\lambda_M \text{ is a concatenation of $\lambda_i\in \frA$} \\
\intertext{and}
	v_L + \lambda_L\circ\ulambda\circ\lambda_R\subset \bbH_+\ 
	\textrm{and}\ \lb v_L + \lambda_L\circ\ulambda\circ\lambda_R\rb \cap\Delta = \emptyset.   
\end{gather*}
Note that irreducibility of the $\lambda_i$-s is not required here, since randomly glueing irreducible pieces together is necessary to recover independence in~\eqref{eq:ERW-steps}~\cite{IOSV,Ott+Velenik-2018}.
\smallskip

For $(\lambda_L, \ulambda, \lambda_R)\in \frT^*_N$, set 
\begin{equation}\label{uv-star}
	\sfu^*_L \defby (j^*_L, u^*_L) = v_L + X(\lambda_L), \ 
	\sfu^*_R \defby (j^*_R, u_R^*) = v_R - X(\lambda_R) = \sfu^*_L + X(\ulambda) .
\end{equation}
 Given two  probability measures \(\rho_{L,+},\rho_{R,+}\) on \(\SetRootMarkBackCont\) and \(\SetRootMarkForwCont\), respectively, and a probability measure $\sfp$ on $\frA$, one can construct the induced probability distribution $\sfP_+^*$ on $\frT_N^*$:
\begin{equation}\label{eq:Plus} 
	\sfP_+^* (\lambda_L\circ\ulambda\circ\lambda_R) =
	\frac{1}{Z_N^*}	\rho_L(\lambda_L)\, \rho_R(\lambda_R) \prod_{i=1}^M \sfp(\lambda_i) .
 \end{equation}
The product term on the right-hand side of the last expression is interpreted as an effective random walk with \iid steps distributed according to
\begin{equation}\label{eq:ERW-steps} 
	\sfP (X = \sfx) = \sum_{\lambda \in \frA} \sfp(\lambda) \IF{X(\lambda) = \sfx} .
\end{equation}
As in the case of Theorem~\ref{thm:OZ_main}, the following statement may be imported from~\cite{Ott+Velenik-2018} and from entropic repulsion estimates for random walks.
\begin{theorem}	
	\label{thm:OZ_main-Psi} 
	Let \({\mathbf p}\) be the (infinite-volume) probability measure on \(\SetRootDiaCont\) as it appears in Theorem~\ref{thm:OZ_main}. 
	 There exist \(C\geq 0,c>0\) such that, for any $N$ large enough, one can construct two  probability  measures \(\rho_{L,+}\) and \(\rho_{R,+}\) on \(\SetRootMarkBackCont\) and \(\SetRootMarkForwCont\), respectively, such that
	 \begin{equation}
	 \label{eq:BoundSteps}
	 	\max \bigl\{
	 	\rho_{L,+}\bigl(\theta(\lambda_L)\notin	[2N^\delta, 2N^\delta +\ell]\bigr), 
	 	\rho_{R,+}\bigl(\theta(\lambda_R)\notin	[2N^\delta, 2N^\delta +\ell]\bigr)
	 	\bigr\}
	 	\leq Ce^{-c\ell}.
	 \end{equation} 
	 Furthermore, there exists a coupling $\Psi^*_N$ between $\sfP_+^*$ and $\FKlaw_{\Lambda_+}^{\mathsf{reg}}$ such that 
	\begin{equation}\label{eq:Psi-star} 
		\Psi_N^*(\eta_L\circ\ueta\circ\eta_R \neq \lambda_R\circ\ulambda \circ\lambda_R) \leq C N^{-c\log N} .
	\end{equation}
\end{theorem}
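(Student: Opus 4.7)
The plan is to match $\FKlaw_{\Lambda_+}^{\mathsf{reg}}$ against $\sfP_+^*$ factor by factor, exploiting the decomposition~\eqref{eq:regPhi}. The middle factor $\FKlaw(\sfu_L +\ueta \given v_L + \eta_L ; \sfu_R +\eta_R)$ is already pinned down by Theorem~\ref{thm:OZ_main} and the ratio bound~\eqref{eq:VCondPhiBound}, so the task reduces to constructing $\rho_{L,+}$ and $\rho_{R,+}$ in such a way that the boundary factor $\FKlaw_{\Lambda_+}(v_L + \eta_L;\, \sfu_R + \eta_R)$ approximately factorizes as $\rho_{L,+}(\eta_L)\, \rho_{R,+}(\eta_R)$. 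The overall $N^{-c\log N}$ error rate has already been conceded in Proposition~\ref{prop:PhiPsi}, so what needs to be produced is a uniform $(1+o(1))$ Radon--Nikodym estimate over $\frT_N$; the coupling $\Psi_N^*$ will then follow from the standard maximal coupling argument.

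I would define $\rho_{L,+}$ (and $\rho_{R,+}$ symmetrically) intrinsically via an exploration. Starting at $v_L$, walk along the irreducible decomposition~\eqref{eq:IRD} under $\FKlaw_{\Lambda_+}(\, \cdot \given v_L\leftrightarrow v_R)$ and collect irreducible pieces $\gamma_1, \gamma_2, \dots$ until the first coordinate of $\fend$ of the cumulated piece first exceeds $-N + 2N^\delta$; denote the resulting concatenation $\lambda_L$ and let $\rho_{L,+}$ be the induced law. On the good event of Corollary~\ref{cor:IRD} this procedure is well defined with $\theta(\lambda_L) \in [2N^\delta, 2N^\delta + (\log N)^2]$. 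The exponential tail~\eqref{eq:BoundSteps} then arises because, once the threshold has been crossed, the overshoot $\theta(\lambda_L) - 2N^\delta$ is dominated by the displacement of a single additional irreducible piece, whose law inherits the exponential tail~\eqref{eq:OZ_expDec_steps}; the conditioning on remaining in $\bbH_+$ is absorbed as an $O(1)$ prefactor thanks to the non-trivial lower bound of Lemma~\ref{lem:NonSharpLB}.

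The decoupling of the two ears is the main obstacle. Since both $v_L + \eta_L$ and $\sfu_R + \eta_R$ touch the wall, a direct translation-invariant mixing argument is unavailable. Lemma~\ref{lem:EntropicRepulsion} compensates for this by guaranteeing, with $\FKlaw_{\Lambda_+}(\,\cdot\given v_L\leftrightarrow v_R)$-probability $1 - O(N^{-\epsilon})$, a horizontal corridor of vertical extent $N^\epsilon$ above the wall separating the two ears. Across this corridor the subcritical exponential decay of connectivities together with finite energy and FKG yield
\begin{equation*}
	\FKlaw_{\Lambda_+}(v_L + \eta_L;\, \sfu_R + \eta_R) = \rho_{L,+}(\eta_L)\, \rho_{R,+}(\eta_R)\, \bigl( 1 + O(e^{-c N^\delta}) \bigr)
\end{equation*}
uniformly on $\frT_N$. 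Combining this with~\eqref{eq:VCondPhiBound} and matching the normalizations $Z_N$ and $Z_N^*$, the Radon--Nikodym derivative $\mathrm{d}\FKlaw_{\Lambda_+}^{\mathsf{reg}}/\mathrm{d}\sfP_+^*$ lies within $1 \pm C N^{-c\log N}$ uniformly on $\frT_N$, so that $\TotVarDist(\FKlaw_{\Lambda_+}^{\mathsf{reg}}, \sfP_+^*) \leq C N^{-c\log N}$ and~\eqref{eq:Psi-star} follows via the maximal coupling.
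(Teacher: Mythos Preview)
The paper does not supply a proof of this statement; it is declared an import from~\cite{Ott+Velenik-2018} together with entropic repulsion estimates for the effective random walk. So there is no line-by-line argument to compare against, but your sketch omits the central step and misidentifies where the work lies.

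The genuine gap is your opening move. You assert that the middle factor $\FKlaw(\sfu_L + \ueta \mid v_L + \eta_L;\, \sfu_R + \eta_R)$ is ``already pinned down'' by Theorem~\ref{thm:OZ_main} and~\eqref{eq:VCondPhiBound}. It is not. Theorem~\ref{thm:OZ_main} is an approximation at the level of expectations of bounded observables of the full cluster $C_{0,x}$; it does not say that the conditional law of the chain of irreducible pieces, given the ears, is the product $\prod_i\sfp(\cdot)$. Indeed the irreducible pieces under $\FKlaw(\,\cdot \mid 0\leftrightarrow x)$ are \emph{not} independent. The paper signals this immediately below~\eqref{eq:Plus}: ``irreducibility of the $\lambda_i$-s is not required here, since randomly glueing irreducible pieces together is necessary to recover independence''. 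Passing from $\FKlaw_{\Lambda_+}^{\mathsf{reg}}$ on $\frT_N$ (where $\ueta = \eta_1\circ\dots\circ\eta_k$ with each $\eta_i\in\frA^{\textnormal{irr}}$) to $\sfP_+^*$ on $\frT_N^*$ (where $\ulambda = \lambda_1\circ\dots\circ\lambda_M$ with $\lambda_i\in\frA$ sampled \iid from $\sfp$) therefore requires the renewal construction of~\cite[Section~4]{Ott+Velenik-2018}, in which irreducible pieces are randomly merged via auxiliary Bernoulli marks so as to manufacture independence. This is the crux of the theorem; your sketch treats it as given and then concentrates on the boundary pieces, which are comparatively routine.

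On that secondary point: the ear decoupling does not need Lemma~\ref{lem:EntropicRepulsion} or any vertical corridor. The events $\{v_L + \eta_L\}$ and $\{\sfu_R + \eta_R\}$ appearing in $\FKlaw_{\Lambda_+}(v_L + \eta_L;\, \sfu_R + \eta_R)$ are supported in cones of diameter $O(N^\delta)$ sitting at horizontal separation of order $N$; exponential mixing of the subcritical measure $\FKlaw_{\Lambda_+}$ already yields
\[
\FKlaw_{\Lambda_+}(v_L + \eta_L;\, \sfu_R + \eta_R) = \FKlaw_{\Lambda_+}(v_L + \eta_L)\,\FKlaw_{\Lambda_+}(\sfu_R + \eta_R)\,\bigl(1+O(e^{-cN})\bigr),
\]
with no reference to the cluster avoiding $\Delta$. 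The role of entropic repulsion in the intended argument is at the level of the \emph{effective random walk} --- controlling the conditioning built into $Z_N^*$ that the trajectory stay in $\bbH_+$ and avoid $\Delta$ --- not at the level of the FK configuration.
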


\subsection{Surface tension, geometry of Wulff shape and diffusivity constant  of the effective random walk}
\label{sub:chi}

We follow the conventions for notation introduced in Subsection~\ref{sub:st-Wsh}. 
It will be convenient to write down explicit relations between the diffusivity constant of the effective random walk with \iid steps 
$X = (\theta, \zeta) \in \fcone$, the surface tension $\tau_{\beta^*}$ of the underlying Potts model and the curvature $\chi$ at $(\tau,0)\in \partial\mathbf{K}_{\beta^*}$, the boundary of the corresponding Wulff shape ${\mathbf K}_{\beta^*}$.

We know that $X$ has exponential moments in a neighborhood of the origin. Define 
\[ 
	\bfG(r,h) \defby \sfE\bigl( {\rm e}^{-r\theta + h\zeta}\bigr) .
\] 
Then (see, e.g., Theorem~3.2 in~\cite{Ioffe-2015}), the local parametrization of the boundary $\partial\bfK_\beta$ in a small neighborhood of $(\tau_\beta,0)$ can be recorded as follows:
\begin{equation}\label{eq:Part-Kb}
	(\tau_\beta-r,h) \in \partial \bfK_{\beta^*} \iff \bfG(r,h) = 1. 
\end{equation}
In view of lattice symmetries, a second-order expansion immediately yields the following formula for the curvature $\chi$: 
\begin{equation}\label{eq:chib-form} 
	\chi = \frac{\operatorname{\mathsf{Var}}({\zeta})}{\sfE(\theta)},  
\end{equation}
which coincides with the expression~\eqref{eq:chib-form-rw} for the diffusivity constant of the effective random walk. 

\subsection{Proof of Theorem~\ref{thm:Main}} 
\label{sub:ProofMain} 

In view of Proposition~\ref{prop:PhiPsi} and Theorem~\ref{thm:OZ_main-Psi}, it suffices to prove the invariance principle for the rescaling~\eqref{eq:GammaScaled} of the cluster $\Gamma = \lambda_L\circ\ulambda\circ\lambda_R$ under $\sfP_+^*$. Following~\eqref{eq:recaled-n}, let us define
\begin{equation}\label{eq:en-new}
	\hat{\fre}_N = \hat{\fre}_N(\lambda_L\circ\ulambda\circ\lambda_R) \defby
	\frI_N \Bigl( v_L, \sfu^*_L, \sfu^*_L + X(\lambda_1), \dots, \sfu^*_L + \sum_{i=1}^M X(\lambda_i) = \sfu^*_R, v_R \Bigr) .
\end{equation}
By Proposition~\ref{prop:PhiPsi} and Theorem~\ref{thm:OZ_main-Psi}, we may restrict attention to the case when the rescaled upper and lower envelopes $\Gamma^\pm$ defined in~\eqref{eq:Gam-pm} are close to $\hat{\fre}_N$ in the Hausdorff distance ${\rm d}_{\rm H}$ on $\bbR^2$, 
\begin{equation}\label{eq:HD}
	{\rm d}_{\text{H}}(\Gamma^\pm, \hat{\fre}_N) \leq \frac{(\log N)^2}{\sqrt{N}}, 
\end{equation}
which already implies~\eqref{eq:Gam-pm}. 
Therefore, it is enough to prove an invariance principle for $\hat{\fre}_N$ under $\sfP^*_+$. This, however, readily follows from Theorem~\ref{thm:conv} applied to the rescaling of middle pieces $\ulambda$ and our choice of $\delta = 8\epsilon <1/2$, which ensures that the rescaled boundary pieces $\lambda_L$ and $\lambda_R$ do not play a role. 
 
\subsection{Proofs of Theorems~\ref{thm:Main_Weak_couplings} and~\ref{thm:Main_Line}}
\label{sec:TheOtherTwoThms}
Theorem~\ref{thm:Main_Weak_couplings} is proved by the same argument as Theorem~\ref{thm:Main} (remember Remark~\ref{rem:LineVsHalfSpace}).
Theorem~\ref{thm:Main_Line} needs mostly the following adaptation: Lemma~\ref{lem:wall_to_energy_cost} will give a penalty whenever a cone-point is created on \(\mathcal{L}\) and not on the whole lower space. The same strategy used in the proof then shows that the cluster avoids the symmetrized version of \(\Delta(N,\epsilon)\) (see~\eqref{eq:Delta} in Section~\ref{sec:Entropic_repulsion}) with probability tending to one as \(N\to\infty\). Conditioning on the half-space containing the maximum of \(\Gamma^+\), one can then carry on the rest of the analysis and obtain Theorem~\ref{thm:Main_Line}.
 
\section{Fluctuation theory of the effective random walk}
\label{sec:Fluct}

\subsection{Effective random walk}
\label{sub:erw}

Theorem~\ref{thm:OZ_main}  and, subsequently, Theorem~\ref{thm:OZ_main-Psi} set up the stage for considering effective random walks \(\walk\) with $\bbN \times \bbZ$-valued \iid\ steps $X_1, X_2, \dots$, whose coordinates
will be denoted  as $X = (\theta , \zeta)$, and  which have the following set of properties: 
\begin{enumerate} 
	\item They have exponential tails: There exists $\alpha >0$ 
	such that $\bbE({\rm e}^{\alpha (\theta  +\abs{\zeta})}) <\infty$.  
	\item The conditional distribution ${\mathbf P}(\cdot \given \theta)$ of $\zeta$ is ${\mathbf P}$-a.s. symmetric, in particular  $\theta$ and $\zeta$ are uncorrelated. 
	\end{enumerate}
By Theorem~\ref{thm:OZ_main-Psi}, the displacements (recall \eqref{eq:displacement}) along 
diamond-confined clusters $\gamma \in \frA$ under $\sfp$, that is, 
\begin{equation}\label{eq:Setp-p} 
{\mathbf P}(\displace_i=x) = \sfp \lb \gamma\in \frA~:~\displace(\gamma) = x\rb  , 
\end{equation} 
satisfy the above assumptions.  

Define the diffusivity constant (compare with~\eqref{eq:chib-form})
\begin{equation}\label{eq:chib-form-rw} 
\chi = \frac{ \mathbf{Var} ({\zeta })}{\bfE ( \theta )}. 
\end{equation}
For $\sfu = (k, u)$, we use ${\mathbf P}_\sfu$
for the  random walk $\walk$ which starts at $\sfu$; $\walk_0 = \sfu$. Under ${\mathbf P}_\sfu$, the 
position $\walk_i$ of the walk after $i$ steps is given by 
\begin{equation}\label{eq:RW-steps} 
\walk_i = \sfu + \sum_{\ell =1}^i X_\ell \defby \sfu + \lb \sfT_i , \sfZ_i\rb, \ 
\text{where $\sfT_i = \sum_1^i\theta_\ell$ and $\sfZ_i = \sum_1^i \zeta_\ell$}.
\end{equation}
Given a subset  $A \subseteq   \bbN\times \bbZ$, or more generally $A\subset \bbR^2$,  define the hitting times 
\[ 
H_A = \inf\lbr i~:~ \walk_i \in A\rbr\ \text{and write $H_v = 
	H_{\lbr v\rbr}$ for vertices.} 
\]
Furthermore, given a subset $A\subseteq   \bbN\times \bbZ$ and a stopping time $H$ write 
\[ 
\calL_A ( H ) = \# \lbr i\leq H ~:~ \walk_i\in A\rbr .
\]
for the local time of $\walk$ at $A$ during the time interval $[0, H]$.

\subsection{Uniform repulsion estimates} 
\label{sub:Rep-ub}
We start with some general considerations and notation: 
 Let $U_n$ be a zero mean one-dimensional  random walk with \iid increments $\xi_k$. A function $h$ is called harmonic for $U_n$ killed at leaving the positive half-line if it solves the equation
\[
h(x)=\mathbf{E}[h(x+\xi_1);x+\xi_1>0],\quad x\ge0.
\]
According to Doney \cite{Doney98}, every positive solution to this equation is a multiple of the renewal function based on ascending ladder heights.
If one assumes that the increments $\xi_k$ have finite variance then ladder heights have finite expectations. Therefore, by the standard renewal theorem,
the corresponding renewal function is asymptotically linear. As a result,
\[
h(x)\sim Cx\quad\text{as }x\to\infty.
\]
In what follows, we will choose harmonic functions for which the latter relation holds with $C=1$. For this choice of the constant one has the representation
\[
h(x)=x-\mathbf{E}_x[U_\tau],
\]
where, with a slight abuse of notation we used $\mathbf{E}_x$ for the expectation with respect to the one-dimensional random walk $U_n$, which starts at $x \in \bbZ$, and where 
\[
\tau \defby \inf\{n\ge1: U_n\le0\}.
\]
Furthermore, $\mathbf{E}_x[U_\tau]$ converges, as $x\to\infty$, to a constant.

Let us go back 
to our $\bbN \times \bbZ$-valued effective random walks \(\walk\) as described in  Subsection~\ref{sub:erw}. 
Set  $\bbH_-$ to be  the lower half-plane, 
	\begin{equation}\label{eq:Hminus} 
	\bbH_- = \lbr x = (x_1, x_2 )~:~ x_2 < 0\rbr .
	\end{equation}	
First of all, the following asymptotic formula holds: 
\begin{theorem} 
	\label{thm:tpf}
	There exists a constant $C \in \bbR_+$ such that, as $n\to\infty$, 
\begin{equation}
\label{eq:tpf}
\mathbf{P}_{(0 , u)}\left( H_{(n , v)} <  
H_{\mathbb H_-} <\infty \rb 
\sim C\frac{h^+(u)h^-(v)}{n^{3/2}}
\end{equation}
uniformly  in $u,v \in ( 0, \delta_n\sqrt{n})\cap \bbN$, where
$\delta_n\to0$ arbitrarily slowly, and $h^\pm$ are positive harmonic functions
for random walks $\pm\sfZ_n$ killed when leaving the positive half-line.

Furthermore, there exists a constant $C$ such that 
\begin{equation}
\label{eq:tpf-a}
\mathbf{P}_{(0 , u)}\left( H_{(n , v)} <  
H_{\mathbb H_-} <\infty \rb 
\sim C\frac{h^+(u)ve^{-v^2/2n\bf{Var}(\zeta)}}{n^{3/2}}
\end{equation}
uniformly  in $u\in ( 0, \delta_n\sqrt{n})\cap \bbN$
and $v \in (\delta_n\sqrt{n},\sqrt{n})\cap \bbN$.

Finally, if $\delta_n\to0$ sufficiently slowly, then there exists a positive bounded function $\psi$ such that
\begin{equation}
\label{eq:tpf-b}
\mathbf{P}_{(0 , u)}\left( H_{(n , v)} <  
H_{\mathbb H_-} <\infty \rb 
\sim \frac{\psi(u/\sqrt{n},v/\sqrt{n})}{n^{1/2}}
\end{equation}
uniformly  in $u,v \in (\delta_n\sqrt{n},\sqrt{n})\cap \bbN$.
\end{theorem}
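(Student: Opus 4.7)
The plan is to exploit that $\theta_j \geq 1$ to write the probability in question as a disjoint sum over the number of steps $i$,
\[
\mathbf{P}_{(0,u)}\bigl(H_{(n,v)} < H_{\bbH_-}\bigr) = \sum_{i\geq 1} q_i(u,v;n), \qquad
q_i(u,v;n) \defby \mathbf{P}_{(0,u)}\bigl(\sfT_i = n,\, \sfZ_i = v,\, \sfZ_j \geq 0\ \forall\, j\leq i\bigr),
\]
and to concentrate the sum on $i$ in a window of width $K\sqrt n$ around $m \defby n/\mathbf{E}(\theta)$ using the exponential moment of $\theta$. For each such $i$ I would factor
\[
q_i(u,v;n) \sim \mathbf{P}(\sfT_i = n)\cdot \mathbf{P}\bigl(\sfZ_i = v,\, \sfZ_j\geq 0\ \forall\, j\leq i\bgiven \sfZ_0 = u\bigr),
\]
a factorization which is asymptotically correct because of assumption~(2): $\mathbf{E}(\zeta\given\theta) = 0$ implies that $\sfT$ and $\sfZ$ are uncorrelated, and a standard bivariate local CLT for walks with exponential tails then decouples the $(\sfT, \sfZ)$-density into a product of diagonal Gaussians.

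The first factor is the renewal local CLT,
\[
\mathbf{P}(\sfT_i = n) \sim \tfrac{1}{\sqrt{2\pi i\, \mathbf{Var}(\theta)}}\exp\Bigl(-\tfrac{(n - i\mathbf{E}\theta)^2}{2i\,\mathbf{Var}(\theta)}\Bigr).
\]
The second factor is handled via the Denisov--Wachtel/Caravenna--Chaumont theory for one-dimensional random walks killed at leaving $(0,\infty)$: splitting $[0,i]$ into a short initial piece $[0,k]$, a diffusive bulk, and a short terminal piece $[i-k,i]$ with $k\asymp (\log n)^2$, Doney's harmonic functions $h^\pm$ arise naturally as the limits of the entrance/exit laws, and one obtains $\mathbf{P}(\sfZ_i = v,\,\sfZ_j\geq 0\ \forall j\leq i\given \sfZ_0 = u) \sim C\,h^+(u)h^-(v)/i^{3/2}$ when $u, v = \smo{\sqrt i}$. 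Summing the product of the two factors over $i$ in the window converts $\sum_i \mathbf{P}(\sfT_i = n)$ into the renewal density $1/\mathbf{E}(\theta)$, while the $i^{-3/2}$ factor is frozen at $i \approx m$; this yields \eqref{eq:tpf}. In the intermediate regime of \eqref{eq:tpf-a} the endpoint $v$ is of order $\sqrt n$, so Denisov--Wachtel's estimate becomes $C\,h^+(u)\,v\,e^{-v^2/(2i\,\mathbf{Var}(\zeta))}/i^{3/2}$ (consistently with $h^-(x)\sim x$ at infinity, the $h^-(v)$ is replaced by $v$ and the Gaussian factor from the meander density survives), producing \eqref{eq:tpf-a} after the same summation. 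The macroscopic regime \eqref{eq:tpf-b} does not call for Doney's machinery: the whole walk sits on the diffusive scale $\sqrt n$, and the joint local CLT combined with the scaling limit of the walk conditioned to stay positive identify $\psi(\hat u,\hat v)$ as a fixed rescaling of the Brownian bridge-in-half-space transition density at time $1/\mathbf{E}(\theta)$ from $\hat u$ to $\hat v$.

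The main obstacle is obtaining these asymptotics \emph{uniformly} in $(u,v)$ across the three overlapping regimes. The Denisov--Wachtel-type local limit theorems for conditioned walks are uniform only after delicate control of the overshoot distribution at the barrier and of the coupling of the conditioned walk with its Brownian meander scaling limit, both of which rest on the exponential tail assumption. Matching \eqref{eq:tpf} to \eqref{eq:tpf-a} at the scale $v\asymp \delta_n\sqrt n$ relies on the normalization $h^\pm(x)/x\to 1$ fixed by Doney; matching \eqref{eq:tpf-a} to \eqref{eq:tpf-b} requires showing that $\psi(\hat u,\hat v)\sim c\,\hat u\,\hat v\,e^{-\hat v^2\mathbf{E}(\theta)/(2\mathbf{Var}(\zeta))}$ as $\hat u\to 0$. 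Both matchings are standard consequences of the Brownian scaling limit but have to be written out carefully, and constitute the bulk of the technical work.
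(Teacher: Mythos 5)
Your decomposition over the number of steps and localization of that sum near $n/\mathbf{E}\theta$ via exponential tails is the same as the paper's first reduction. The paper also relies on Denisov--Wachtel \cite{DW15} local limit theorems in the central window, so the ingredients you name are the right ones. However, two points separate your sketch from a proof.

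First, the factorization
$q_i(u,v;n) \sim \mathbf{P}(\sfT_i=n)\cdot\mathbf{P}_u(\sfZ_i=v,\,\sfZ_j\ge 0\ \forall j\le i)$
is \emph{not} how the paper proceeds, and the justification you offer (that a bivariate endpoint LCLT ``decouples the density into diagonal Gaussians'') does not establish it: the constraint $\sfZ_j\ge 0$ is a \emph{path} event, and $\theta$ and $\zeta$ are only uncorrelated, not independent, so conditioning on $\{\sfT_i=n\}$ genuinely changes the law of the $\sfZ$-path. The paper instead treats $\walk=(\sfT,\sfZ)$ as a two-dimensional walk confined to the upper half-plane and applies the DW15 machinery \emph{jointly}, making the (correct and crucial) observation that the relevant harmonic function for the half-plane depends only on the second coordinate and hence reduces to $h^\pm$. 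To localize $k$ to the $A\sqrt n$ window, the paper does not factorize; it tilts the joint increment law by $e^{h\theta}$, pays attention to the small $O(h^2)$ drift that this tilting induces on $\zeta$ through the definition of $\walk^0$ and $u^0,v^0$, and controls the tails via the known bound $\mathbf{P}^{(h)}_{(0,z)}(H^0_{\bbH_-}>k)\le c(z+1)/\sqrt{k}$ combined with \cite[Lemma 28]{DW15}. Your sketch omits this drift correction, which is exactly where the ``uncorrelated but not independent'' subtlety bites.

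Second, you correctly flag the uniformity in $u,v\le\delta_n\sqrt n$ as ``the main obstacle'' and ``the bulk of the technical work,'' but you do not supply the argument. That argument \emph{is} the content of the proof: the paper reduces the matter to showing
$\mathbf{E}_{(0,u)}[Z_{\nu_k};\,H_{\bbH_-}>\nu_k,\,\nu_k\le k^{1-\varepsilon}]=h^+(u)(1+\smo{1})$
uniformly in $u\le\delta_n\sqrt n$ (Equation~\eqref{eq:lemma21}), and proves it via optional stopping of the martingale $Z_n$ at $\nu_k\wedge H_{\bbH_-}$, together with Cauchy--Schwarz and a tail bound from \cite[Lemma 14]{DW15} to control the error term $\mathbf{E}[Z_{\nu_k};\nu_k>k^{1-\varepsilon}]$. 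Without an analogue of this step, neither your factorization approach nor the paper's direct approach yields the stated uniform asymptotics; deferring it as ``standard but has to be written out carefully'' leaves a gap at precisely the point where the theorem is new.

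Finally, a small remark on the matching claims at the end of your sketch: the matching of \eqref{eq:tpf} to \eqref{eq:tpf-a} does indeed use $h^\pm(x)\sim x$, but the paper obtains \eqref{eq:tpf-a} and \eqref{eq:tpf-b} by noting that ``the derivations ... are very similar and even simpler''; your proposal to deduce them by asymptotic matching would require an a priori proof that the three asymptotics are mutually consistent across overlap regions, which is a harder statement than proving each regime directly.
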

Note that, since the $\sfZ$-component of  $\walk$ is symmetric, 
$\mathbf{P}_{(0 , u)} (H_{\mathbb H_-} <\infty )=1$ for any $u\in \bbN$, and hence 
the events $\{ H_{\mathbb H_-} <\infty\}$ in~\eqref{eq:tpf}--\eqref{eq:tpf-b} are redundant. 
The statement of Lemma~\ref{lem:HalfSpace_Connexions_UB} relies  on the following fact, which is an analog 
of~\eqref{eq:tpf} for soft-core potentials: 
\begin{theorem} 
	\label{thm:eb-walk}  
	For any $\delta < 1$, there exists a constant $C$ such that 
	\begin{equation}\label{eq:eb-walk} 
	{\mathbf E}_{(0,u)}\bigl( \IF{H_{(n,v)} <\infty}\delta^{\calL_{\bbH_-}(H_{(n , v)})}\bigr) \leq \frac{ Cu v }{n^{3/2}} , 
	\end{equation}
	uniformly in $n$ and in $u, v \in (0, \sqrt{n})\cap \bbN$. 
\end{theorem}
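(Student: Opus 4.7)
The plan is to extend the hard-wall estimate of Theorem~\ref{thm:tpf} to the soft-core setting by decomposing the trajectory of $\walk$ into \emph{aerial} arcs (those with $\walk_i \in \bbH_+$) and \emph{subterranean} arcs (those with $\walk_i \in \bbH_-$), controlling the former by Theorem~\ref{thm:tpf} and the latter by the multiplicative penalty $\delta^{s}$ per arc of horizontal length $s$. I introduce the standard excursion stopping times $\sigma_k = \inf\{i > \tau_{k-1} : \walk_i \in \bbH_-\}$ and $\tau_k = \inf\{i > \sigma_k : \walk_i \in \bbH_+\}$, with $\tau_0 = 0$. On $\{H_{(n,v)} < \infty\}$, the trajectory decomposes as an alternating concatenation of $m+1$ aerial arcs and $m$ subterranean arcs, for some $m = m(\omega) \geq 0$. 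By strong Markov applied at the $\sigma_k, \tau_k$, the left-hand side of~\eqref{eq:eb-walk} splits as a sum over $m$, over the aerial lengths $\ell_0, \dots, \ell_m$, and over the heights $(w_j, w_j', a_k, b_k)$ at the interfaces, with $w_0 = u$ and $w_m' = v$.

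Two ingredient estimates are then needed. The \emph{aerial bound} is given directly by Theorem~\ref{thm:tpf}:
\[
\mathbf{P}_{(0,w)}\bigl(H_{(\ell, w')} < H_{\bbH_-}\bigr) \leq \frac{C(1+w)(1+w')}{\ell^{3/2}},\quad w, w' \leq \sqrt{\ell}.
\]
The \emph{subterranean bound} I would derive from the exponential tails of $\zeta$ and the Sparre-Andersen/Spitzer fluctuation identity, the latter ensuring that a symmetric one-dimensional random-walk excursion below $0$ of length $s$ has probability of order $s^{-3/2}$. Since $\sum_{s \geq 1} \delta^s s^{-3/2} < \infty$ for every $\delta < 1$, this yields that the subterranean transition from entry depth $-a$ to exit height $b$, summed against $\delta^s$ over lengths $s$, is bounded by $C_\delta\, e^{-c(a+b)}$ uniformly in $a, b \geq 0$.

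Combining these via strong Markov yields, for each fixed $m\ge 0$, a contribution bounded by
\[
\sum_{\ell_0+\dots+\ell_m \leq n}\ \sum_{w_j, w_j', a_k, b_k}\ \prod_{j=0}^m \frac{C(1+w_j)(1+w_j')}{\ell_j^{3/2}}\, \prod_{k=1}^m C_\delta\, e^{-c(a_k+b_k)} \leq (m+1)\, A(\delta)^m\, \frac{C'uv}{n^{3/2}}.
\]
The $n^{-3/2}$ factor arises because the $(m+1)$-fold convolution of $\ell^{-3/2}$ at $n$ has order $(m+1)/n^{3/2}$, the tail $\ell^{-3/2}$ being the heavy tail of the one-sided $\tfrac12$-stable law and hence preserved (up to constants) under convolution; the height sums at each interface converge because $(1+w)\,e^{-cw}$ is summable. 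Summing over $m \geq 0$ then gives the claimed bound $Cuv/n^{3/2}$, provided $A(\delta)<1$.

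The main obstacle is establishing precisely this geometric summability. For $\delta$ close to $1$, the per-excursion constant $A(\delta)$ may not be strictly less than $1$ on its own. The cleanest resolution is to aggregate each subterranean arc with the following aerial arc into a single \emph{enhanced} transition, so that the $\delta^s$ weight of the subterranean piece is combined with the $\ell^{-3/2}$ of the aerial one to produce a kernel whose operator norm on the space of functions of the height $w$, weighted by $1+w$, is strictly below $1$ (it is at worst $\delta C_*$ for a fixed $C_*$, times a further factor that can be made small by absorbing part of the $\delta^s$ weight). An alternative and more robust route would be to set up a direct induction on the left-hand side of~\eqref{eq:eb-walk}: conditioning on the first subterranean excursion and closing the recursion via Theorem~\ref{thm:tpf} applied to the aerial prefix, one reduces the proof to a single scalar fixed-point inequality that can be solved by iteration, but the excursion-based approach above parallels the proof of Theorem~\ref{thm:tpf} more directly.
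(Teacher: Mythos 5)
You correctly identify the two essential ingredients (Theorem~\ref{thm:tpf} for the above-wall parts, exponential tails plus a $1/2$-stable excursion bound for the below-wall parts), but your decomposition — into alternating aerial and subterranean arcs, with a geometric series over the number $m$ of excursions — is genuinely different from the paper's, and it has an unresolved gap that the paper's route cleanly avoids.

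The paper does not decompose by excursion arcs at all. It decomposes by the \emph{global minimal height} $-k$ of the trajectory, i.e.\ by the value $\sfZ_*(H_{(n,v)})$, and further by the first horizontal position $m_*$ at which that minimum is attained. The central technical claim is~\eqref{eq:k-c-bound}: starting from the minimal point $(m,-k)$, the $\delta$-weighted expectation of the remaining probability of reaching $(n,v)$ while never going below $-k$ is bounded by $C\,{\rm e}^{-c\sqrt{k}}$ times the unweighted probability ${\mathbf P}_{(m,-k)}(H_{(n,v)} < H_{\bbH^k_-})$. This is proved by a straightforward large-deviation estimate on $\calL_{\bbH_-}(k)$ (the walk starting at depth $k$ is typically still below $0$ for most of the first $k$ steps) combined with~\eqref{eq:tpf} applied to the shifted wall $\bbH^k_-$ to control both sides. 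Then~\eqref{eq:pf-k-bound-leq} (telescoping in $k$, again via~\eqref{eq:tpf}) gives a bound $C(u+k)(v+k)/n^{3/2}$ on the probability of achieving minimum exactly $-k$, and the final sum $\sum_k (u+k)(v+k)\,{\rm e}^{-c\sqrt{k}}/n^{3/2}$ converges to $Cuv/n^{3/2}$. There is no infinite product over excursions and hence no summability threshold to worry about.

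The gap in your proposal is precisely where you flag it: you need the per-excursion constant $A(\delta)$ to satisfy $A(\delta)<1$, and your argument does not give this. Worse, there is a structural reason to be suspicious. Your $C_\delta$ comes from $\sum_{s\ge 1}\delta^s s^{-3/2}$, which is \emph{bounded} as $\delta\to 1$ (it converges to $\zeta(3/2)$), so $A(\delta)\to A(1)$ with $A(1)$ a fixed positive constant that there is no reason to expect is $\le 1$. At the same time the theorem genuinely fails at $\delta=1$: for $u=v=1$ the left-hand side is ${\mathbf P}_{(0,1)}(H_{(n,1)}<\infty)\asymp 1/n$, which is much larger than $C/n^{3/2}$, so the constant in~\eqref{eq:eb-walk} \emph{must} blow up as $\delta\uparrow 1$. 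Any correct argument along your lines must therefore exploit $\delta<1$ in a way that your per-excursion accounting currently does not, since your $A(\delta)$ does not even diverge as $\delta\uparrow1$. The two fixes you sketch (aggregating a subterranean arc with the following aerial arc; a fixed-point/induction argument) are plausible directions, but neither is carried out, and the first one in particular needs a concrete operator-norm estimate that is not obviously available. Until one of them is made precise, the geometric series $\sum_m (m+1)A(\delta)^m$ may simply diverge. The paper's minimal-depth decomposition avoids this difficulty entirely: there is a single sum over $k\ge0$ whose convergence follows from the stretched-exponential factor ${\rm e}^{-c\sqrt{k}}$.

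A smaller point: you should also handle the case of degenerate arcs ($\ell_j=0$, when the walk steps below $0$ immediately from a boundary height), where your $\ell_j^{-3/2}$ bound is not available and Theorem~\ref{thm:tpf} does not apply; this is routine but needs to be said.
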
  

\subsection{Proof of Lemma~\ref{lem:HalfSpace_Connexions_UB}}
\label{sub:le-proof}

First, use~\eqref{eq:wall_to_energy_cost} and the fact that edges which are incident to cone-points are necessarily pivotal, to obtain
\begin{equation}
\label{eq:UB_eq1}
\FKlaw_{\Lambda_+}(\sfu\leftrightarrow \sfv) \leq \FKlaw\Bigl(\IF{\sfu\leftrightarrow \sfv} (1-\epsilon)^{|\CPts(C_{\sfu,\sfv})\cap \Lambda_-|} \Bigr).
\end{equation}
We proceed by deriving an upper bound on the right-hand side of~\eqref{eq:UB_eq1}, as a direct consequence of Theorem~\ref{thm:OZ_main} and of the random-walk estimate~\eqref{eq:eb-walk} of Theorem~\ref{thm:eb-walk}.
Let us denote $\sfu = (k , u)$ and $\sfv = (k +m ,v)$ with $0\leq  u, v\leq \sqrt{m}$.
Then, $\sfv\in \sfu + \fcone_{\delta}$ for all $m$ large and Theorem~\ref{thm:OZ_main} indeed applies, including the exponential bounds~\eqref{eq:OZ_expDec_steps}.
In particular, as far as the derivation of~\eqref{eq:HalfSpace_Connexions_UB} is concerned, we may restrict attention to boundary pieces $\gamma_L , \gamma_R$ satisfying $\| X (\gamma_L)\| , \| X (\gamma_L)\| \leq \lb\log m\rb^2$. Similarly, we may restrict attention to the case when the cluster $C_{\sfu , \sfv}$ does not go below $-N$.  

Let $\sfS$ be the random walk with  step distribution \({\mathbf p}\) defined in Theorem~\ref{thm:OZ_main}.  Due to the discussion in the preceding paragraph, we need to derive an upper bound on the restricted sum which can be recorded in the language employed in Subsection~\ref{sub:Rep-ub} as 
\begin{equation}\label{eq:RestrSum} 
\sum_{\|\sfx \| , \|\sfy \|\leq (\log m )^2} 
\rho_L\lb  X = \sfx\rb\rho_R ( X = \sfy )
{\mathbf E}_{\sfu+\sfx }\bigl( \IF{H_{\sfv - \sfy } <\infty}\delta^{\calL_{\bbH_-}(H_{\sfv - \sfy })}\bigr) .
\end{equation} 
Set $\sfw = \sfu + \sfx = (j, w)$ and $\sfz = \sfv -\sfy = ( j+n , z)$.
By construction, $n\in [m - (\log m)^2 , m]$. Applying~\eqref{eq:OZ_expDec_steps} and Theorem~\ref{thm:eb-walk} (with a straightforward adjustment to treat the cases of $w, z \leq 0$ ), we recover the right-hand side of~\eqref{eq:HalfSpace_Connexions_UB}.\qed

\subsection{Proof of Lemma~\ref{lem:NonSharpLB}}
\label{sub:NonSharpLB-prf}

We only sketch the proof, as it is a straightforward adaptation of the arguments in~\cite[Section~2.5]{Ott+Velenik-2018(2)}.

Using Lemma~\ref{lem:LB_HalfSpaceToFullSpace} and the (full-space) Ornstein--Zernike asymptotics of~\cite{Campanino+Ioffe+Velenik-2008}, we obtain
\begin{align*}
\label{eq:half-full}
\FKlaw_{\Lambda_+}( v_L \leftrightarrow v_R )
&\geq
\FKlaw\bigl( C_{v_L} \subset \Lambda_+,\, v_L \leftrightarrow v_R \bigr) \\
&=
\FKlaw\bigl( C_{v_L} \subset \Lambda_+ \given v_L \leftrightarrow v_R \bigr) \FKlaw\bigl( v_L \leftrightarrow v_R \bigr) \\
&=
\frac{C}{\sqrt{N}} e^{-2\tau N} \FKlaw\bigl( C_{v_L} \subset \Lambda_+ \given v_L \leftrightarrow v_R \bigr).
\end{align*}
We bound the probability in the right-hand side by restricting to a particular class of paths. Namely, those that connect \(v_L\) to the vertex \(\sfa=(-N+T,N+T)\) by a path going first vertically to \((-N,T)\) and then horizontally to \(\sfa\), and connect \(v_R\) to \(\sfb=(N-T,N+T)\) in a symmetric way (here \(T\) is a fixed large positive number). Arguing as in~\cite[Lemma~2.6]{Ott+Velenik-2018(2)}, we then deduce that
\begin{multline*}
\FKlaw\bigl( C_{v_L} \subset \Lambda_+ \given v_L \leftrightarrow v_R \bigr)\\
\geq
C\,
{\mathbf P}_\sfa (H_{\bbH_-} > H_{\sfb} \given \infty > H_{\sfb})\,
{\mathbf P}_\sfa (\sfS_k > \theta_k\;\forall k\leq H_{\sfb} \given \infty > H_{\bbH_-} > H_{\sfb}).
\end{multline*}
The first probability in the right-hand side can be bounded below by \(C/N\) using~\eqref{eq:tpf} and the local CLT. The reason for the presence of the second probability is that a sufficient condition for the cluster not to visit \(\bbH_-\) is that the diamonds associated to the effective random walk do not intersect \(\bbH_-\). This probability can be shown to be bounded below by a positive constant using the same argument as in~\cite[Lemma~2.7]{Ott+Velenik-2018(2)}.

\subsection{Invariance principle} 
\label{eq:Jscaling}

Recall~\eqref{eq:chib-form-rw}.
Consider the conditional distribution of the excursion $\walk [0, H_{(n , v)}]$ under 
\begin{equation}\label{eq:Cond-P} 
\mathbf{P}_{(0 , u)} (\, \cdot \given H_{(n , v)} <  H_{\bbH_-} ).
\end{equation}
Fix $\epsilon >0$ small. 
In view of Lemma~\ref{lem:EntropicRepulsion}, we need to derive an invariance principle	for Brownian excursion, as $n\to\infty$, \emph{uniformly} in $u,v\in (n^\epsilon, n^{5\epsilon})$.
Namely, let us use $\bfQ^n_{u ,v}$ for the law of the diffusively rescaled linear interpolation $\fre_n$ of the random-walk trajectory $\walk [0, H_{(n, v)}]$; 
\begin{equation}\label{eq:frs} 
	\fre_n = \frI_n \bigl( \walk [0, H_{(n , v)}] \bigr) , 
\end{equation}  
where, given a subset $\{ (t_1,z_1), (t_2,z_2), \dots, (t_k,z_k)\}$ with $t_1 < t_2 < \dots < t_k $, $\frI_n$ is the linear interpolation through the vertices of the rescaled set
\begin{equation}\label{eq:recaled-n} 
\Bigl( \frac1n t_1 , \frac{1}{\sqrt{\chi n}} z_1 \Bigr) , 
\Bigl( \frac1n t_2 , \frac{1}{\sqrt{\chi n}} z_2 \Bigr) , \dots , 
\Bigl( \frac1n t_k , \frac{1}{\sqrt{\chi n}} z_k \Bigr) .
\end{equation}
\begin{theorem} 
\label{thm:conv}
	Let $\bfQ^\infty$ be the law of the positive normalized Brownian excursion $\fre$ on the unit interval $[0,1]$.
	Let $\delta_n\to 0$ arbitrarily slowly as $n\to\infty$ and let $u,v \in ( 0, \delta_n\sqrt{n})\cap \bbN$.  
	Then, the limit as $n\to\infty$ of the family of distributions $\{\bfQ^n_{u,v}\}_{u,v\in (0,\delta_n\sqrt{n})\cap \bbN}$ is equal to $\bfQ^\infty$. More precisely,
	\begin{enumerate} 
		\item The family $\{\bfQ^n_{u,v}\}_{u,v\in (0,\delta_n\sqrt{n})\cap \bbN}$ is tight. 
		\item For any  $k$, any $0 < t_1 < t_2 < \dots < t_k < 1$ and any fixed bounded continuous function $F$ on $\bbR_+^k$, 
		\begin{equation}\label{eq:unif-lim}  
			\lim_{n\to\infty} 
			\bfQ^n_{u_n,v_n} \bigl( F(\fre_n(t_1), \dots , \fre_n(t_k)) \bigr) = 
			\bfQ^\infty \bigl( F(\fre(t_1), \dots , \fre(t_k))\bigr) , 
		\end{equation}
		uniformly in the collections of sequences $\bigl\{ u_n, v_n\in (0, \delta_n\sqrt{n}) \cap \bbN \bigr\}$.
	\end{enumerate}
\end{theorem}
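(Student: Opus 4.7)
The plan is to prove $\bfQ^n_{u_n,v_n} \Rightarrow \bfQ^\infty$ on $C([0,1])$ via the standard combination of (i) tightness of the family $\{\bfQ^n_{u,v}\}$, uniform in $u_n, v_n \in (0,\delta_n\sqrt{n})\cap \bbN$, together with (ii) convergence of the finite-dimensional distributions to those of the normalized Brownian excursion. Both parts are driven by the sharp asymptotic estimates~\eqref{eq:tpf}--\eqref{eq:tpf-b} for the transition kernel of $\walk$ killed on $\bbH_-$, which I treat as the main technical input.

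\textbf{Finite-dimensional convergence.} For fixed $0 < t_1 < \cdots < t_k < 1$, I would express the joint law of $(\fre_n(t_1),\ldots,\fre_n(t_k))$ under $\bfQ^n_{u_n,v_n}$ by applying the strong Markov property at the hitting times of the vertical lines $\{x_1 = \lfloor nt_j\rfloor\}$. Up to the usual lattice-to-continuum corrections, this gives the ratio
\[
\bfQ^n_{u_n,v_n}\bigl(\fre_n(t_j)\approx z_j,\, j=1,\dots,k\bigr) = \frac{\prod_{j=0}^{k}\mathbf{P}_{(\lfloor nt_j\rfloor,\lfloor z_j\sqrt{\chi n}\rfloor)}\bigl(H_{(\lfloor nt_{j+1}\rfloor,\lfloor z_{j+1}\sqrt{\chi n}\rfloor)} < H_{\bbH_-}\bigr)}{\mathbf{P}_{(0,u_n)}(H_{(n,v_n)} < H_{\bbH_-})},
\]
with the boundary conventions $(t_0,z_0) = (0, u_n/\sqrt{\chi n})$ and $(t_{k+1},z_{k+1}) = (1, v_n/\sqrt{\chi n})$. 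I would then feed each factor through Theorem~\ref{thm:tpf}: the middle factors (both endpoints at height $\asymp \sqrt{n}$) are governed by~\eqref{eq:tpf-b}, the two outer factors by~\eqref{eq:tpf-a}, and the denominator by~\eqref{eq:tpf}. The harmonic-function prefactors $h^+(u_n)$ and $h^-(v_n)$ cancel cleanly between numerator and denominator, and the remaining Gaussian kernels combine into precisely the finite-dimensional density of the normalized Brownian excursion at $(t_1,\ldots,t_k)$, the diffusivity $\chi$ being absorbed by the spatial rescaling in~\eqref{eq:recaled-n} as required by~\eqref{eq:chib-form-rw}. Uniformity in $u_n,v_n$ is immediate from this cancellation.

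\textbf{Tightness.} I would establish a Kolmogorov-type moment bound of the form
\[
\bfQ^n_{u_n,v_n}\bigl(|\fre_n(t)-\fre_n(s)| \geq \lambda\bigr) \leq \frac{C(t-s)^{1+\eta}}{\lambda^{2+\eta}},
\]
for some $\eta>0$, uniform in $u_n,v_n \in (0,\delta_n\sqrt{n})\cap\bbN$ and $0 \leq s \leq t \leq 1$. Via the $k=2$ case of the Markov decomposition above, this reduces to summing the product of three transition-kernel asymptotics from Theorem~\ref{thm:tpf} over all intermediate heights $z \in \bbN$; the Gaussian decay in~\eqref{eq:tpf-a}--\eqref{eq:tpf-b} controls the resulting double sum after the $h^\pm$ factors cancel against the denominator. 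Combined with the endpoint pinning $|\fre_n(0)|, |\fre_n(1)| = o(1)$, which follows from $u_n,v_n = o(\sqrt{n})$, this yields tightness on $C([0,1])$.

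\textbf{Main obstacle.} The principal difficulty lies inside Theorem~\ref{thm:tpf}: one needs sharp asymptotics for $\mathbf{P}_{(0,u)}(H_{(n,v)} < H_{\bbH_-})$ uniformly across three regimes of $(u,v)$, relying on Doney-type harmonic-function representations and a careful local CLT for the walk killed on $\bbH_-$. The quantitative step on which the present theorem hinges is the identification of the function $\psi$ in~\eqref{eq:tpf-b} with the one-dimensional Brownian killed transition density $p^+_1(\,\cdot,\,\cdot)$, rescaled by $\chi$; once this is done, the match between the discrete and continuous finite-dimensional densities is a direct computation, and the uniformity in the entrance and exit heights $u_n,v_n$ is automatic from cancellation of the $h^\pm$ prefactors.
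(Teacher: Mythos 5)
Your proposal takes a genuinely different route from the paper's. The paper does \emph{not} go through finite-dimensional marginals together with a separate tightness argument. Instead, it decomposes according to the random number of steps $k$ at which the walk reaches $(n,v)$, uses the moderate-deviation bounds~\eqref{eq:moderate1}--\eqref{eq:moderate2} from the proof of Theorem~\ref{thm:tpf} to localize $k$ to a window of width $A\sqrt{n}$ around $n/\mathbf{E}\theta$, and then, for each such $k$, directly imports the Duraj--Wachtel invariance principle (Theorem 6 of \cite{DW}) for a $k$-step walk conditioned to stay in the half-plane and to end at a prescribed point. The remaining work in the paper is to verify the harmonic-function convergence~\eqref{eq:lemma21} uniformly in $u\le\delta_n\sqrt n$, and to convert the step-index time parametrization into the horizontal-coordinate parametrization of~\eqref{eq:recaled-n} via the random change of time $h_k$. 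Tightness is never proved separately: once one has convergence in the Skorokhod $J_1$-topology and a continuous limit, convergence in the uniform topology follows from Skorokhod~\cite{Skor56}.

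Your approach --- FDD convergence via a spatial Markov decomposition across the vertical lines $\{x_1=\lfloor nt_j\rfloor\}$, fed by the three regimes of Theorem~\ref{thm:tpf}, plus a Kolmogorov-type moment bound for tightness --- is a legitimate alternative, and the observation that the $h^\pm$ prefactors cancel between numerator and denominator to give the uniformity in $u_n,v_n$ is correct and is the right mechanism. However, a few gaps beyond the one you flag deserve to be named concretely. First, the product formula you write down tacitly assumes the walk hits the specific lattice points $(\lfloor nt_j\rfloor,\lfloor z_j\sqrt{\chi n}\rfloor)$; since the horizontal increments $\theta$ are $\geq 1$ but can exceed $1$, the random walk generically overshoots the lines $x_1=\lfloor nt_j\rfloor$, and $\fre_n(t_j)$ is an \emph{interpolated} value, not the walk's value at a hitting time. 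A correct decomposition must go through the stopping times $\sigma_j=\max\{i:\sfT_i\le \lfloor nt_j\rfloor\}$ and control the $O(1)$ overshoots; this is doable but is not ``the usual lattice-to-continuum correction'' --- it requires writing out the exit decomposition across the slab. Second, Theorem~\ref{thm:tpf} as stated only gives the existence of a bounded function $\psi$ in~\eqref{eq:tpf-b}; identifying $\psi$ with the killed heat kernel (which your FDD computation needs) is not contained in the paper and would require an additional local CLT argument which the paper's own route does not need. Third, the Kolmogorov moment bound you propose for tightness is plausible but would itself require local-limit-theorem control of the kind your FDD step already demands. In short: your strategy is viable and arguably more self-contained, at the cost of three non-trivial technical steps that the paper avoids by conditioning on $k$ and leveraging \cite{DW} wholesale.
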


\subsection{Proofs}
\label{sub:proofs}

\begin{proof}[Proof of Theorem~\ref{thm:tpf}]
	First, by the total probability formula,
	\begin{align*}
	\mathbf{P}_{(0 , u)}\left(H_{\mathbb H_-} > H_{(n , v)} \in (0, \infty)\right)
	&=\sum_{k=1}^\infty\mathbf{P}_{(0 , u)}\left(H_{\mathbb H_-} > H_{(n , v)}=k\right)\\
	&=\sum_{k=1}^\infty\mathbf{P}_{(0 , u)}\left(\walk_k=(n,v), H_{\mathbb H_-}>k\right).
	\end{align*}
	Fix some $\varepsilon>0$. Since $\theta$ has finite exponential moments, the exponential Chebyshev inequality implies that
	\[
	\sum_{k<(1/\mathbf{E}\theta-\varepsilon) n}\mathbf{P}_{(0 , u)}\left(\walk_k=(n,v), H_{\mathbb H_-}>k\right)
	\le \sum_{k<(1/\mathbf{E}\theta-\varepsilon) n}\mathbf{P}\Bigl(\sum_{\ell=1}^k\theta_\ell\ge n\Bigr)=\sfO(e^{-c\varepsilon n}).
	\]
	Furthermore, by the same argument for lower tails, we have
	\[
	\sum_{k>(1/\mathbf{E}\theta+\varepsilon) n}\mathbf{P}_{(0 , u)}\left(\walk_k=(n,v), H_{\mathbb H_-}>k\right)
	\le \sum_{k>(1/\mathbf{E}\theta+\varepsilon) n}\mathbf{P}\Bigl(\sum_{\ell=1}^k\theta_\ell\le n\Bigr)=\sfO(e^{-c\varepsilon n}).
	\]
	
	Fix also a large constant $A$. Our next purpose is to estimate the probability $\mathbf{P}_{(0 , u)}\left(\walk_k=(n,v), H_{\mathbb H_-}>k\right)$ for $k\in[(1/\mathbf{E}\theta-\varepsilon) n,n/\mathbf{E}\theta-A\sqrt{n}]$. The main idea is to perform an exponential change of measure:
	\[
	\mathbf{P}^{(h)}(\theta=j,\zeta=x)=\frac{e^{hj}}{\mathbf{E}e^{h\theta}}\mathbf{P}(\theta=j,\zeta=x).
	\]
	Then, clearly,
	\[
	\mathbf{P}_{(0 , u)}\left(\walk_k=(n,v), H_{\mathbb H_-}>k\right)=e^{-h n}\left(\mathbf{E}e^{h\theta}\right)^k
	\mathbf{P}^{(h)}_{(0 , u)}\left(\walk_k=(n,v), H_{\mathbb H_-}>k\right).
	\]
	For all $h$ small enough, we have
	\[
	\mathbf{E}e^{h\theta}\le e^{h\mathbf{E}\theta+h^2\mathbf{Var}(\theta)}.
	\]
	Then, choosing 
	\[
	h_{k,n}=\frac{n-k\mathbf{E}\theta}{2k\mathbf{Var}(\theta)},
	\]
	we arrive at the upper bound
	\begin{equation}
	\label{eq:exp-change}
	\mathbf{P}_{(0 , u)}\left(\walk_k=(n,v), H_{\mathbb H_-}>k\right)\le \exp\left\{-\frac{(n-k\mathbf{E}\theta)^2}{4k\mathbf{Var}(\theta)}\right\}
	\mathbf{P}^{(h_{k,n})}_{(0 , u)}\left(\walk_k=(n,v), H_{\mathbb H_-}>k\right).
	\end{equation}
	
	Define
	\[
	\walk_j^0\defby\walk_j-j(0,\mathbf{E}^{(h_{k,n})}\zeta).
	\]
	Then
	\begin{align*}
	&\Bigl\{\walk_k=(n,v), H_{\mathbb H_-}>k\Bigr\}\\
	&=\Bigl\{u+\sum_{\ell=1}^j\zeta_\ell>0\text{ for all } j\le k, (0,u)+\walk_k=(n,v)\Bigr\}\\
	&=\Bigl\{u+\sum_{\ell=1}^j\zeta_\ell^0>-j\mathbf{E}^{(h_{k,n})}\zeta \text{ for all } j\le k, (0,u)+\walk_k^0=(n,v-n\mathbf{E}^{(h_{k,n})}\zeta)\Bigr\}\\
	&\subseteq\Bigl\{u^0+\sum_{\ell=1}^j\zeta_\ell^0>0 \text{ for all } j\le k, (0,u^0)+\walk_k^0=(n,v^0)\Bigr\},
	\end{align*}
	where
	\[
	u^0\defby u+n\big|\mathbf{E}^{(h_{k,n})}\zeta\big|\text{ and }
	v^0\defby v+n\big|\mathbf{E}^{(h_{k,n})}\zeta\big|-n\mathbf{E}^{(h_{k,n})}\zeta.
	\]
	In other words,
	\[
	\mathbf{P}^{(h_{k,n})}_{(0 , u)}\left(\walk_k=(n,v), H_{\mathbb H_-}>k\right)\le \mathbf{P}^{(h_{k,n})}_{(0 , u^0)}\left(\walk_k^0=(n,{v^0}), H^0_{\mathbb H_-}>k\right), 
	\]
	where $H^0_{\mathbb H_-}$ is the first hitting time of 
		$\mathbb H_-$ by the modified random walk $\walk^0 = 
		\lb \sfT^0 , \sfZ^0\rb$. 
	Since 
	$H^0_{\mathbb H_-}$ is an exit time for a one-dimensional random walk 
	$\bfZ^0$ 
	with zero mean and finite variance, one has the bound (see~\cite[Lemma 2.1]{AGKV})
	\[
	\mathbf{P}^{(h_{k,n})}_{(0 , z)}\left(H^0_{\mathbb H_-}>k\right)\le c_1\frac{z+1}{\sqrt{k}}
	\]
	uniformly in all $z>0$. 
	
	Using this bound in the proof of~\cite[Lemma 28]{DW15}, one gets easily the bound
	\[
	\mathbf{P}^{(h_{k,n})}_{(0 , u^0)}\left(\walk_k^0=(n,{v^0}), H^0_{\mathbb H_-}>k\right)
	\le c_2\frac{(u^0+1)(v^0+1)}{k^2}
	\]
	uniformly in all positive $u^0,v^0$.
	
	Recall that $\theta$ and $\zeta$ are uncorrelated. 
	Then, by the Taylor formula,
	\[
	\mathbf{E}^{(h)}\zeta
	=\frac{\mathbf{E}(\zeta e^{h\theta})}{\mathbf{E}e^{h\theta}}
	=\frac{h^2}{2}\mathbf{E}(\theta^2\zeta)+\sfo(h^2),\quad h\to0. 
	\]
	Therefore, for small \(h_{k,n}\),
	\[
	\abs{\mathbf{E}^{(h_{k,n})}\zeta} \le a h^2_{k,n}
	\quad\text{with}\  a=\mathbf{E}|\theta^2\zeta|.
	\]
	As a result, we have
	\[
	\mathbf{P}^{(h_{k,n})}_{(0 , u)}\left(\walk_k=(n,v), H_{\mathbb H_-}>k\right)\le c_3\frac{(u+nh^2_{k,n})(v+nh^2_{k,n})}{k^2}.
	\]
	Combining this bound with \eqref{eq:exp-change}, summing over $k$ and using the fact that the functions $h^\pm$ are asymptotically linear, we obtain
	\begin{equation}
	\label{eq:moderate1}
	\sum_{k\in[(1/\mathbf{E}\theta-\varepsilon)n,n/\mathbf{E}\theta-A\sqrt{n}]}\mathbf{P}_{(0 , u)}\left(\walk_k=(n,v), H_{\mathbb H_-}>k\right)\le \frac{f_1(A)h^+(u)h^-(v)}{n^{3/2}},
	\end{equation}
	where $f_1(A)\to0$ as $A\to\infty $. 
	This estimate is uniform in $u, v \in (0, \sqrt{n})\cap \bbN$.,
	
	The same argument gives, 
	 also
	 uniformly in $u, v \in (0, \sqrt{n})\cap \bbN$, 
	\begin{equation}
	\label{eq:moderate2}
	\sum_{k\in[n/\mathbf{E}\theta+A\sqrt{n},(1/\mathbf{E}\theta+\varepsilon)n]}\mathbf{P}_{(0 , u)}\left(\walk_k=(n,v), H_{\mathbb H_-}>k\right)\le \frac{f_2(A)h^+(u)h^-(v)}{n^{3/2}},
	\end{equation}
	where $f_2(A)\to0$ as $A\to\infty $.
	
	For $k\in[n/\mathbf{E}\theta-A\sqrt{n},n/\mathbf{E}\theta+A\sqrt{n}]$
	one can repeat the proof of the local limit theorems from~\cite{DW15}.
	Compared to that paper, we have a rather particular case: a two-dimensional random walk confined to the upper half-plane. But we want to get a result which is valid not only for bounded start- and endpoints. Since we have a walk in the upper half-plane, the corresponding harmonic function depends on the second coordinate only and is equal to the harmonic function of the walk $Z_n$ killed at leaving $(0,\infty)$. So, we only have to show that the convergence in \cite[Lemma~21]{DW15} holds for all starting points $(0,u)$  with $u\le\delta_n\sqrt{n}$. More precisely, we need to prove that
	\begin{equation}
	\label{eq:lemma21}
	\mathbf{E}_{(0,u)}[Z_{\nu_k}; H_{\mathbb H_-}>\nu_k,\nu_k\le k^{1-\varepsilon}]=h^+(u)(1+\sfo(1)) 
	\end{equation}
	uniformly in $u\leq \delta_n\sqrt{n}$ and $k\in[n/\mathbf{E}\theta-A\sqrt{n},n/\mathbf{E}\theta+A\sqrt{n}]$.
	Above, $\nu_k$ is the first hitting time of the positive half-space $(k^{1/2-\epsilon}, 0)+\bbH_+$.
	The relation~\eqref{eq:lemma21} leads to the fact that all the arguments in~\cite[Sections~4 and~5]{DW15} hold uniformly in $u\in(0,\delta_n\sqrt{n})$.  
	Then, repeating the proof in~\cite[Theorem 6]{DW15}, we obtain
	\[
	\mathbf{P}_{(0 , u)}\left(\walk_k=(n,v), H_{\mathbb H_-}>k\right)
	\sim c_4\frac{h^+(u)h^-(v)}{k^2}\exp\left\{-\frac{(n-k\mathbf{E}\theta)^2}{2k\mathbf{Var}(\theta)}\right\},
	\]
	uniformly in
	$u, v \in (0, \delta_n\sqrt{n})\cap \bbN$. 
	Summing over $k$, we get
	\begin{equation}\label{eq:k-prop}
	\sum_{k\in[n/\mathbf{E}\theta-A\sqrt{n},n/\mathbf{E}\theta+A\sqrt{n}]}\mathbf{P}_{(0 , u)}\left(\walk_k=(n,v), H_{\mathbb H_-}>k\right)
	\sim(C_5-f_3(A))\frac{h^+(u)h^-(v)}{n^{3/2}},
	\end{equation}
	where $f_3(A)\to0$ as $A\to\infty $.
	
	Combining all the estimates above, we finally deduce the asymptotic relation~\eqref{eq:tpf}. Thus, it remains to prove~\eqref{eq:lemma21}. Here one can use again the fact that we are dealing with a one-dimensional random walk. Since $Z_n$ is a martingale, we use the optional stopping theorem to obtain
	\[
	u=\mathbf{E}_{(0,u)}Z_{\nu_k\wedge H_{\mathbb H_-}}
	=\mathbf{E}_{(0,u)}[Z_{\nu_k};\nu_k< H_{\mathbb H_-}]
	+\mathbf{E}_{(0,u)}[Z_{H_{\mathbb H_-}};\nu_k\ge H_{\mathbb H_-}].
	\]
	Consequently,
	\begin{multline}
	\label{eq:lemma21.1}
	\mathbf{E}_{(0,u)}[Z_{\nu_k};\nu_k< H_{\mathbb H_-},\nu_k\le k^{1-\epsilon}]\\
	= u-\mathbf{E}_{(0,u)}[Z_{H_{\mathbb H_-}};\nu_k\ge H_{\mathbb H_-}]-\mathbf{E}_{(0,u)}[Z_{\nu_k};\nu_k< H_{\mathbb H_-},\nu_k> k^{1-\epsilon}].
	\end{multline}
	Recalling that $\mathbf{E}_{(0,u)}H_{\mathbb H_-}$ is bounded and that $h^+(u)\sim u$ as $u\to\infty$, one gets easily
	\begin{equation}
	\label{eq:lemma21.2}
	u-\mathbf{E}_{(0,u)}[Z_{H_{\mathbb H_-}};\nu_k\ge H_{\mathbb H_-}]=h^+(u)(1+\sfo(1))
	\end{equation}
	uniformly in $u$. 
	Furthermore, by the Cauchy--Schwarz inequality,
	\[
	\mathbf{E}_{(0,u)}[Z_{\nu_k};\nu_k< H_{\mathbb H_-},\nu_k> k^{1-\epsilon}]
	\le \mathbf{E}^{1/2}_{(0,u)}[Z^2_{\nu_k};\nu_k< H_{\mathbb H_-}]
	\mathbf{P}^{1/2}_{(0,u)}(\nu_k> k^{1-\epsilon}, \nu_k< H_{\mathbb H_-}). 
	\]
	Obviously,
	$Z^2_{\nu_k}=(Z_{\nu_k-1}+\zeta_{\nu_k})^2\le 2 k^{1-2\varepsilon}+2\zeta_{\nu_k}^2$ on the event 
	$\{\nu_k< H_{\mathbb H_-}\}$. Thus, using the total probability formula, we get
	\[
	\mathbf{E}_{(0,u)}[Z^2_{\nu_k};\nu_k< H_{\mathbb H_-}]
	\le 2(k^{1-2\epsilon}+\mathbf{E}\zeta)
	\mathbf{E}_{(0,u)}[\nu_k\wedge H_{\mathbb H_-}]
	\le C k^{2-4\epsilon}.
	\]
	In the last step, we have used the bound $\mathbf{E}_{(0,u)}[\nu_k\wedge H_{\mathbb H_-}]\le C k^{1-2\epsilon}$, which follows from the normal approximation. By~\cite[Lemma~14]{DW15},
	\begin{align*}
	\mathbf{P}_{(0,u)}(\nu_k> k^{1-\epsilon}, \nu_k< H_{\mathbb H_-})
	&\le \mathbf{P}_{(0,u)}(\nu_k> k^{1-\epsilon}, H_{\mathbb H_-}> k^{1-\epsilon})\le e^{-Ck^\epsilon}.
	\end{align*}
	As a result,
	\begin{equation}
	\label{eq:lemma21.3}
	\mathbf{E}_{(0,u)}[Z_{\nu_k};\nu_k< H_{\mathbb H_-},\nu_k> k^{1-\epsilon}]=\sfO(e^{-Ck^\epsilon})
	\end{equation}
    for some $C>0$. Combining~\eqref{eq:lemma21.1}--\eqref{eq:lemma21.3}, we obtain~\eqref{eq:lemma21}. 
	
	The derivations of~\eqref{eq:tpf-a} and~\eqref{eq:tpf-b} are very similar and even simpler and are thus omitted.	
	\end{proof}
		
	\begin{proof}[Proof of Theorem~\ref{thm:eb-walk}]
	Let us introduce some provisional notation: 

	\smallskip 
	\noindent 
	\textsf{Hitting times.}  
	$\bbH^k_- \defby 
	(0, -k )+ \bbH_- = \lbr x = (x_1 , x_2)~:~ x_2 <- k\rbr$ for the negative half-planes passing through the shifted points $(0 , -k )$. 
	
	\smallskip 
	\noindent 		
	\textsf{Minimal heights.} 
	Given an in general random time $H\in \bbN$, let 
	$\sfZ_* ( H)\defby  \min_{\ell  = 0, \dots , H} \sfZ_\ell$ be the minimal value of the vertical coordinate $\sfZ$ of the random-walk trajectory $\sfS [0,H]$ on the time interval $[0, H]$. Furthermore, 
	let $m_* (H ) = \min\setof{m}{ (m,\sfZ_*(H)) \in \sfS[0,H]}$ be the horizontal projection of the leftmost vertex of $\sfS [0, H]$, at which the minimal height  $\sfZ_* ( H)$ was attained. 
		
	Evidently, 
	\begin{multline} 
	\label{eq:eb-walk-sum} 
	{\mathbf E}_{(0,u)}\bigl( \IF{H_{(n,v)} <\infty} \delta^{\calL_{\bbH_-}( H_{(n , v)})}\bigr)
	\leq\\
	{\mathbf P}_{(0,u)} (H_{(n , v)} < H_{\bbH_-})
	 + 
	\sum_{k=0}^{\infty} {\mathbf E}_{(0,u)}\bigl( \IF{H_{(n,v)}<\infty} 
    \IF{Z^* (H_{(n,v)}) =-k} \delta^{\calL_{\bbH_-} (H_{(n , v)})}\bigr).
	\end{multline}
	The first term on the right-hand side above is controlled by Theorem~\ref{thm:tpf}. 
	In view of the exponential tails, we may fix $\epsilon >0$ small and  restrict attention to 
	such terms in the above sum, which satisfy $k\leq n^{1/2+ \epsilon}$. 
	
	Now, 
	\begin{align}
	\label{eq:E-splitn} 
	{\mathbf E}_{(0,u)}\bigl( \IF{H_{(n,v)}<\infty}& 
		\IF{Z^* (H_{(n,v)}) =-k} \delta^{\calL_{\bbH_-}(H_{(n , v)})}\bigr) \\
		\nonumber
		&=
	{\mathbf E}_{(0,u)}\bigl( \IF{m_*\in [0,n/2 ]} \IF{H_{(n,v)}<\infty} 
		\IF{Z^* (H_{(n,v)}) =-k} \delta^{\calL_{\bbH_-} (H_{(n , v)})}\bigr)\\
		\nonumber 
		&
           + 
    {\mathbf E}_{(0,u)}\bigl(\IF{m_*\in [n/2 ,n ]} \IF{H_{(n,v)}<\infty} 
		\IF{Z^* (H_{(n,v)}) =-k} \delta^{\calL_{\bbH_-} (H_{(n , v)})}\bigr).
	\end{align}
	We shall consider only the first term on the right-hand side above, the second one is completely similar. 
	Let us decompose  with respect to the possible values of $m_*$ 
	\begin{multline*}
	{\mathbf E}_{(0,u)}\bigl(\IF{m_*\in [0,n/2 ]} \IF{H_{(n,v)}<\infty} 
		\IF{Z^* (H_{(n,v)}) =-k} \delta^{\calL_{\bbH_-}(H_{(n , v)})}\bigr)  \\
		= \sum_{m=1}^{\lfloor n/2\rfloor}
	{\mathbf E}_{(0,u)}\bigl(\IF{m_* = m} \IF{H_{(n,v)}<\infty} 
		\IF{Z^* (H_{(n,v)}) =-k} \delta^{\calL_{\bbH_-} (H_{(n , v)})}\bigr) .
	\end{multline*} 
	We shall rely on several crude upper bounds. The first one is  
	\begin{align} 
	\label{eq:crude-bound1} 
	&{\mathbf E}_{(0,u)}\bigl(\IF{m_* = m} \IF{H_{(n,v)}<\infty} 
		\IF{Z^* (H_{(n,v)}) =-k} \delta^{\calL_{\bbH_-} (H_{(n , v)})}\bigr)  
		\nonumber\\
	&\quad\leq 
		{\mathbf E}
		_{(0,u)}
		\Bigl(  
		\IF{H_{(m , -k) } = H_{\bbH^{k-1}_-}} 
		{\mathbf E}_{(m, -k)}
		\bigl( 
		\IF{H_{(n,v)} <H_{\bbH^k_-}}  \delta^{\calL_{\bbH_-} (H_{(n , v)})}
		\bigr) 
		\Bigr)
		\nonumber\\
	&\quad\leq 
		{ {\rm e}^{-cn  \wedge \frac{n^2}{k}}} + 
		{\mathbf E}
		_{(0,u)}
		\Bigl(
		\IF{H_{(m , -k) } = H_{\bbH^{k-1}_-}} 
		{\mathbf E}_{(m, -k)}
		\bigl(
		\delta^{\calL_{\bbH_-}(k)} 
		{\mathbf P}_{{\mathsf S}_k } 
		(H_{(n,v)}< H_{\bbH^k_-})
		\bigr) 
		\Bigr)  .
	\end{align}
	For $k\leq n^{1/2+ \epsilon}$, the first summand in~\eqref{eq:crude-bound1} above is negligible. 
	We claim that there exist $c, C \in (0, \infty )$ such that \footnote{The stretched $\sqrt{k}$ rate of decay is used only for minimizing the discussion needed for ruling out $k > \sqrt{n}$. For the rest of $k$-s, the usual exponential bounds with decay rate proportional to $k$ hold.} 
	\begin{equation}\label{eq:k-c-bound} 
	{\mathbf E}_{(m, -k)}
		\bigl(
		\delta^{\calL_{\bbH_-}(k)} 
		{\mathbf P}_{{\mathsf S}_k } 
		(H_{(n,v)}< H_{\bbH^k_-})  
		\bigr)
		\leq C {\rm e}^{-c\sqrt{k} } 
		{\mathbf P}_{(m, -k)} (H_{(n,v)}< H_{\bbH^k_-}), 
	\end{equation} 
	uniformly in $n\in \bbN$ sufficiently large and, then, in $v\leq \sqrt{n}$, $m\in [0, n/2]$ and (for $\epsilon >0$ being fixed appropriately  small) $k \in [0, n^{1/2 +\epsilon} ]$. 
	We shall relegate the justification of~\eqref{eq:k-c-bound} to the end of the proof. At this stage, note that~\eqref{eq:k-c-bound} (and its analogue for the second term on the right-hand side of~\eqref{eq:E-splitn}) would imply that 
	\begin{multline} 
	\label{eq:m-k-bound} 
	{\mathbf E}_{(0,u)}\bigl(
	\IF{H_{(n,v)}<\infty} 
		\IF{Z^* (H_{(n,v)}) =-k} \delta^{\calL_{\bbH_-} (H_{(n , v)})}\bigr)    \\
		\leq C {\rm e}^{-c\sqrt{k} } 
	{\mathbf P}_{(0,u)}\bigl( 
	H_{(n,v)}<\infty  ; 
	Z^* (H_{(n,v)}) =-k  
		\bigr).
	\end{multline}
	It follows that, as far as the sum in~\eqref{eq:eb-walk-sum} is concerned, we may further restrict attention to $k\leq \frac{1}{c}(\log n )^3$. In the latter case, however, Theorem~\ref{thm:tpf} applies and 
	\begin{equation}\label{eq:pf-k-bound} 
	{\mathbf P}_{(0,u)} (H_{(n,v)} < H_{\bbH^{k}_-})
	\sim  C\frac{(u+k )(v+k )}{n^{3/2}} .
	\end{equation}
	Consequently, 
	\begin{multline} 
	\label{eq:pf-k-bound-leq} 
	{\mathbf P}_{(0,u)}
	\bigl( 
	H_{(n,v)}<\infty ; 
	\sfZ_* (H_{(n,v)}) =-k 
	\bigr)\\
	= {\mathbf P}_{(0,u)} (H_{(n,v)} < H_{\bbH^{k+1}_-})
	-
	{\mathbf P}_{(0,u)}	(H_{(n,v)} < H_{\bbH^{k}_-})\\
	\leq C\frac{(u+k+1 )(v+k+1 )}{n^{3/2}} .
	\end{multline}	
	Substituting~\eqref{eq:m-k-bound} and~\eqref{eq:pf-k-bound-leq} into~\eqref{eq:eb-walk-sum}
	 yields: There exist $c , C \in (0,\infty)$, such that 
	\begin{equation}\label{eq:P-k-crude-fin}
	{\mathbf E}_{(0,u)}
	\bigl(
	\IF{H_{(n,v)}<\infty} 
	\IF{Z^* (H_{(n,v)}) =-k} 
	\delta^{\calL_{\bbH_-}(\sfH_{(n , v)})}
	\bigr) 
	\leq  
	\sum_{k=0}^\infty C\frac{(u+k )(v+k ) {\rm e}^{-c\sqrt{k}}}{n^{3/2}} , 
	\end{equation}
	and we are home.  
	\smallskip 
	
	\noindent 
	\textit{Proof of~\eqref{eq:k-c-bound}}. First of all, in view of Theorem~\ref{thm:tpf}, the right-hand side of~\eqref{eq:k-c-bound} satisfies 
	\begin{equation}\label{eq:RHS} 
	{\mathbf P}_{(m, -k)} (H_{(n,v)}< H_{\bbH^k_-}) \geq 
			C\frac{(v+\min\lbr k , \sqrt{n}\rbr )}{n^{3/2}} , 
	\end{equation} 
	uniformly in $m$ and $k$ in question. Consider now the left-hand side of~\eqref{eq:k-c-bound}.
	Since $k\leq n^{1/2 +\epsilon}$ and $\epsilon$ is small, we  may rely on moderate deviation estimates and 
	restrict attention to $\abs{S_k  - (m, -k )} = \babs{\sum_1^k \zeta_i}\leq \sqrt{n}$.
	In the latter case Theorem~\ref{thm:tpf} applies, and the following upper bound holds: There exists $C^*<\infty$, such that 
	\begin{equation}\label{eq:k-terms} 
		{\mathbf E}_{(m, -k)}
			\bigl(  
			\delta^{\calL_{\bbH_-}(k)} 
			{\mathbf P}_{{\mathsf S}_k} ({H_{(n,v)}< H_{\bbH^k_-}})
			\bigr)  
			\leq 
			C^* {\mathbf E}_{(m, -k)}
			\Bigl(  
			\delta^{\calL_{\bbH_-}(k)} 
            \frac{ v + \babs{\sum_1^k \zeta_i}}{n^{3/2}}\Bigr)  .
	\end{equation}
	It remains to notice that, by the usual large deviation upper bounds under Cramér's condition, there exists $c^*>0$ such that 
	\begin{equation}\label{eq:c-star-bound} 
	{\mathbf E}_{(m, -k)}
			\Bigl(
			\delta^{\calL_{\bbH_-}(k)} 
            \bigl( v + \babs{\sum_1^k \zeta_i}\bigr)\Bigr) 
            \leq C^* {\rm e}^{-c^* k}  (v + k ) , 
	\end{equation} 
	uniformly in $k, v\in \bbZ_+$. Together with~\eqref{eq:RHS}, this implies~\eqref{eq:k-c-bound}.
\end{proof}

\begin{proof}[Proof of Theorem~\ref{thm:conv}]
	The above changes in the arguments from~\cite{DW15} allow one to repeat the proof of~\cite[Theorem 6]{DW}, which gives the convergence of a properly centered and rescaled walk $\sf S_n$ towards the two-dimensional Brownian bridge conditioned to stay in the upper half-plane. This convergence is uniform in the range of $u,v$ as formulated  in Theorem~\ref{thm:conv}. In particular, we have convergence of each coordinate of the two-dimensional walk $\sf S_n$. More precisely, again uniformly in $u,v \in (0,\delta_n\sqrt{n})\cap \bbN$ and, also for each $A$ fixed, uniformly in the number of steps  $k\in[n/\mathbf{E}\theta-A\sqrt{n},n/\mathbf{E}\theta+A\sqrt{n}]\cap \bbN$ which shows up in the principal sum~\eqref{eq:k-prop}, 
	\begin{equation}
	\label{eq:inv1}
		\mathbf{P}_{(0,u)}\bigl(\max_{j\le k}|\sfT_j-j\mathbf{E}\theta|>\delta k\bigm|\walk_k=(n,v), H_{\mathbb H_-}>k\bigr)\to 0,
	\end{equation}
	and, for any $\ell\in \bbN$, any $0 < t_1 < t_2 <\dots < t_\ell <1$, 
	any fixed bounded continuous function $F$  on $\bbR_+^\ell$, 
	\begin{equation}
	\label{eq:inv2}
	\mathbf{E}_{(0,u)}\left[F\lb \frz_{k} (t_1 ) , \dots , \frz_k (t_\ell )\rb \big|\walk_k=(n,v), H_{\mathbb H_-}>k\right]\to \mathbf{Q}^\infty \left[F\lb \frz_{k} (t_1 ) , \dots , \frz_k (t_\ell )\rb\right] ,
	\end{equation}
	where $\frz_{k}$ is the linear interpolation with nodes 
	\[
	\Bigl( \frac1k,\frac{{\sf Z}_1}{\sqrt{k\mathbf{Var}(\theta))}}\Bigr), 
	\Bigl( \frac2k,\frac{{\sf Z}_2}{\sqrt{k\mathbf{Var}(\theta))}}\Bigr) , 
	\dots, 
	\Bigl( \frac{k-1}{k},\frac{{\sf Z}_{k-1}}{\sqrt{k\mathbf{Var}(\theta))}}\Bigr) , 
	\Bigl( 1, \frac{v}{\sqrt{k\mathbf{Var}(\theta))}}\Bigr) .
	\]
	Thus, in view of~\eqref{eq:moderate1} and~\eqref{eq:moderate2}, it remains to bound the difference between this interpolation and the interpolation in~\eqref{eq:recaled-n} for $k$ such that $\abs{n-k\mathbf{E}\theta}\le A\sqrt{n}$. To this end, we notice that the random change of time $h_k$, defined as the linear interpolation of $(\ell/k,T_\ell/n)$, transforms~\eqref{eq:recaled-n} into $\frz_{k}$. Combining this observation with~\eqref{eq:inv1} and~\eqref{eq:inv2}, we obtain the convergence of~\eqref{eq:recaled-n} in the Skorokhod $J_1$-topology. Since the limiting process --- Brownian excursion --- has continuous paths, one has also  the convergence in the uniform topology. This follows from Theorem~2.6.2 in Skorokhod's classical paper~\cite{Skor56}.
\end{proof}

\section*{Acknowledgments}

The research of D.~Ioffe was partially supported by 
Israeli Science Foundation grant 765/18,
S.~Ott was supported by the Swiss NSF through an early Postdoc.Mobility Grant and
Y.~Velenik acknowledges support of the Swiss NSF through the NCCR
SwissMAP.

\appendix

\section{A Monotone Coupling}
\label{app:coupling}

For \(\Delta\subset E_{\Ztwo}\) finite, denote \(\FKlaw_{a,\Delta}\equiv\FKlaw_{a,\Delta}^0\) the random-cluster measure in \(\Delta\) with free (\(0\)) boundary condition and weights \(e^{\beta}-1\) on edges with both endpoints having nonnegative second coordinate and weight \(a\) on the others. In particular, \(\FKlaw_{0,\Lambda}\) is the random-cluster measure on the half-box \(\Lambda_+\) with free boundary condition and weights \(e^{\beta}-1\).

In this section, we construct a monotone coupling of \(\FKlaw_{b,\Delta}\) and \(\FKlaw_{a,\Delta}\) for \(b>a\). The construction follows closely the one used in the proof of~\cite[Theorem 3.47]{Grimmett-2006}. We fix \(\Delta\) and let \(\Delta^+ = \Delta\cap (\R\times\R_{\geq 0}) \) and \(\Delta^- = \Delta\cap (\R\times\R_{< 0})\); both are seen as the graphs induced by their set of edges, where edges are identified with the corresponding \emph{open} line segments. For a finite set of edges \(E\), denote by \(\ouvert_{-}(E)\) the number of edges in \(E\) with at least one endpoint having negative second coordinate.

Let \(e_1,\dots,e_{\abs{E_{\Delta}}}\) be an enumeration of the edges of \(\Delta\) and set \(E_i=\{e_1,\dots,e_i\}\). Let \((U_i)_{i=1}^{\abs{E_{\Delta}}}\) be an \iid family of uniform random variables on \([0,1]\). From a realization \(u=(u_i)_{i}\) of \(U=(U_i)_i\), we construct two configurations \(\omega=\omega(u)\) and \(\eta=\eta(u)\)
with joint distribution  \(\coupling\) as follows:

\smallskip
\begin{algorithm}[H]
	\DontPrintSemicolon
	\label{alg:Sampling}
	Set \(i=1\) \;
	\While{\(i\leq |E_{\Delta}|\)}{
		Set \(\omega_{e_i} =\IF{u_i < \FKlaw_{b}( X_{e_i}=1\given X_{E_{i-1}} = \omega_{E_{i-1}} )} \)\;
		Set \(\eta_{e_i} =\IF{u_i< \FKlaw_{a}( X_{e_i}=1 \given X_{E_{i-1}} = \eta_{E_{i-1}} )} \)\;
		Update \(i=i+1\)\;
	}
	\caption{Constructing \(\omega,\eta\).}
\end{algorithm}

\smallskip
Monotonicity of random-cluster measures in their parameters and boundary condition ensures that \(\omega\geq \eta\). Direct computation shows that \(\omega(U)\sim \FKlaw_{b}\) and \(\eta(U)\sim \FKlaw_{a}\).
\begin{claim}
	For any \(e_M\in\Delta^{-}\),
	\begin{equation*}
	\coupling(\omega_{e_M}=1,\, \eta_{e_M}=0 \given U_1=u_1,\dots,U_{M-1}=u_{M-1} ) \geq \frac{b-a}{(b+q)(b+1)}
	\end{equation*}
    uniformly over \(u_1,\dots,u_{M-1}\).
\end{claim}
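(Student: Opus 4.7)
The plan is to unpack the event $\{\omega_{e_M}=1,\,\eta_{e_M}=0\}$ through the algorithm into a condition on $U_M$, and then bound the resulting length from below by reducing, in two monotonicity steps, to a single-edge comparison handled by the standard finite-energy bounds of the random-cluster model.

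First, I note that conditionally on $U_1=u_1,\dots,U_{M-1}=u_{M-1}$ the configurations $\xi\defby\omega_{E_{M-1}}$ and $\zeta\defby\eta_{E_{M-1}}$ are determined and satisfy $\zeta\le\xi$. The algorithm sets $\omega_{e_M}=1$ iff $U_M<p_b\defby\FKlaw_b(X_{e_M}=1\mid X_{E_{M-1}}=\xi)$, and $\eta_{e_M}=0$ iff $U_M\ge\tilde p_a\defby\FKlaw_a(X_{e_M}=1\mid X_{E_{M-1}}=\zeta)$. Monotonicity of the random-cluster measure in parameters and boundary conditions gives $\tilde p_a\le p_b$, so the conditional probability in the claim equals $p_b-\tilde p_a$.

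The next step is two successive monotonicity reductions. I will apply FKG to the measure $\FKlaw_a(\,\cdot\mid X_{E_{M-1}}=\cdot)$ and the increasing event $\{X_{e_M}=1\}$: since $\zeta\le\xi$ we get $\tilde p_a\le p_a\defby\FKlaw_a(X_{e_M}=1\mid X_{E_{M-1}}=\xi)$, reducing the task to lower bounding $p_b-p_a$. I will then introduce an intermediate measure $\FKlaw_*$ that coincides with $\FKlaw_b$ except that the weight at $e_M$ itself is $a$ rather than $b$. Monotonicity of the random-cluster measure in the edge weights (with $q\ge1$) gives $\FKlaw_*(X_{e_M}=1\mid\xi)\ge p_a$, so it suffices to lower bound $\FKlaw_b(X_{e_M}=1\mid\xi)-\FKlaw_*(X_{e_M}=1\mid\xi)$, which is a comparison of two measures that differ only in the weight at the single edge $e_M$.

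For this last, single-edge comparison, writing $f(w)\defby\FKlaw^{(w)}(X_{e_M}=1\mid\xi)$ where $\FKlaw^{(w)}$ has weight $w$ at $e_M$ and weight $b$ on the remaining $\Delta^-$ edges, one easily factors the partition function according to the value of $\omega_{e_M}$ to obtain $f(w)=wA/(wA+B)$ with constants $A,B\ge0$ independent of $w$. A direct algebraic manipulation gives the clean identity
\[
f(b)-f(a)=\frac{(b-a)\,f(a)\,(1-f(b))}{a}.
\]
The usual finite-energy bounds $f(a)\ge a/(a+q)$ and $1-f(b)\ge 1/(b+1)$, which survive the conditioning on $\xi$ by the tower property, then deliver $p_b-p_a\ge(b-a)/[(a+q)(b+1)]\ge(b-a)/[(b+q)(b+1)]$, as required. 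The only point requiring care is bookkeeping the directions of the two monotonicity inputs, since enlarging the conditioning enlarges $\tilde p_a$ while enlarging the weights on the other $\Delta^-$ edges enlarges $\FKlaw_*(X_{e_M}=1\mid\xi)$; both push against the desired lower bound and are therefore exactly what we need.
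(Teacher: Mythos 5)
Your argument is correct, and it reaches the same bound as the paper, but by a genuinely different mechanism once the common first reduction is complete.

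The shared part: both proofs observe that the conditional probability equals $p_b - \tilde p_a$, and both then replace $\tilde p_a$ by $p_a = \FKlaw_a(X_{e_M}=1\mid\xi)$ using monotonicity of the random-cluster measure in the conditioning (since $\zeta\le\xi$). From there the approaches diverge. The paper writes $p_b - p_a = \int_a^b \frac{\dd}{\dd s}\FKlaw_s(X_{e_M}=1\mid\xi)\,\dd s$, identifies the derivative with a covariance $\frac{1}{s}\Cov_s^*(|\ouvert_-(X)|,X_{e_M})$, splits off $\Var_s^*(X_{e_M})=\FKlaw_s^*(X_{e_M}=1)\FKlaw_s^*(X_{e_M}=0)$, and uses FKG to discard the remaining nonnegative cross-covariance. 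You instead insert an intermediate measure $\FKlaw_*$ that raises only the other $\Delta^-$ weights to $b$, invoke weight-monotonicity to replace $p_a$ by $f(a)=\FKlaw_*(X_{e_M}=1\mid\xi)$, and exploit the rational-function form $f(w)=wA/(wA+B)$ of a single-edge marginal. Your route trades the Russo-type derivative computation and the covariance decomposition for one extra monotone comparison plus elementary algebra; it is arguably more self-contained, since it avoids differentiating the measure, and it even yields the marginally sharper intermediate constant $(a+q)(b+1)$. The paper's route is closer in spirit to differential inequalities that recur in the random-cluster literature.

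One small wrinkle worth fixing before splicing your argument in: the claim is applied in the paper with $a=0$, and then your identity $f(b)-f(a)=\frac{(b-a)\,f(a)\,(1-f(b))}{a}$ is a $0/0$ expression. The singularity is removable (continuity in $a$, or taking a limit), but it is cleaner to present the symmetric form
\[
f(b)-f(a)=\frac{(b-a)\,AB}{(bA+B)(aA+B)}=(b-a)\,\frac{f(b)}{b}\,\bigl(1-f(a)\bigr),
\]
and then bound $f(b)/b\ge 1/(b+q)$ and $1-f(a)\ge 1/(a+1)\ge 1/(b+1)$ by finite energy. This yields exactly the same final constant with no division by $a$.
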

\begin{proof}
	First, notice that (denoting \(\omega_{E_{M-1}}(u_1,\dots,u_{M-1})\) the configuration \(\omega\) restricted to \(E_{M-1}\) and similarly for \(\eta\))
	\begin{align*}
	\coupling(\omega_{e_M}=1,\, \eta_{e_M}=0& \given U_1=u_1,\dots,U_{M-1}=u_{M-1} )\\
	&=
	\FKlaw_{b}( X_{e_M}=1 \given X_{E_{i-1}} = \omega_{E_{i-1}} ) - \FKlaw_{a}( X_{e_M}=1 \given X_{E_{i-1}} = \eta_{E_{i-1}} )\\
	&\geq
	\FKlaw_{b}( X_{e_M}=1 \given X_{E_{i-1}} = \omega_{E_{i-1}} ) - \FKlaw_{a}( X_{e_M}=1 \given X_{E_{i-1}} = \omega_{E_{i-1}} )\\
	&=\int_{a}^{b} \frac{\dd}{\dd s} \FKlaw_{s}( X_{e_M}=1 \given X_{E_{i-1}} = \omega_{E_{i-1}} )\, \dd s.
	\end{align*}
	The claim will thus follow once we establish that \(\frac{\dd}{\dd s} \FKlaw_{s}( X_{e_M}=1\given X_{E_{i-1}} = \omega_{E_{i-1}} )\geq (b+q)^{-1}(b+1)^{-1}\) for any \(s\leq b\). Write \(\FKlaw_{s}^{*}(\cdot) = \FKlaw_{s}( \cdot \given X_{E_{i-1}} = \omega_{E_{i-1}} ) \); this is a random-cluster measure on \( E_{\Lambda}\setminus E_{M-1}\). Let \(X\sim \FKlaw_{s}^{*}\). Then,
	\begin{align*}
	\frac{\dd}{\dd s} \FKlaw_{s}^{*}( X_{e_M}=1)
    &=
    \frac{1}{s} \Cov_s^{*} \bigl( \abs{\ouvert_{-}(X)},\, X_{e_M} \bigr)\\
	&=
    \frac{1}{s} \Cov_s^{*} \bigl( \abs{\ouvert_{-}(X)} - X_{e_M},\, X_{e_M} \bigr) +
    \frac{1}{s} \FKlaw_{s}^{*}(X_{e_M}=1) \FKlaw_{s}^{*}(X_{e_M}=0)\\
	&\geq
    \frac{1}{s} \frac{s}{s+q} \frac{1}{s+1}
    \geq
    \frac{1}{(b+q)(b+1)},
	\end{align*}
	since \(\abs{\ouvert_{-}(X)} - X_{e_M}\) is a nondecreasing function and is thus positively correlated with \(X_{e_M}\) (the remainder follows from finite energy).
\end{proof}

As \(\coupling(\omega_{e_M}=1 \given U_1=u_1,\dots,U_{M-1}=u_{M-1} )\leq \frac{b}{1+b}\) (by finite energy),
 one has 
\[
\coupling(\eta_{e_M}=0 \given \omega_{e_M}=1,\, U_1=u_1,\dots,U_{M-1}=u_{M-1} ) \geq \frac{b-a}{(b+q)(b+1)} \frac{1+b}{b} = \frac{b-a}{(b+q)b} .
\]

Write \(\epsilon=\epsilon(a,b) = \frac{b-a}{(b+q)b}\). This implies that, for any configuration \(\psi\) and any set \(A\subset E_{\Delta^-}\) with \(\psi_e=1\) for all \(e\in A\),
\begin{equation}
\label{eq:strictMonotCoupling}
\coupling( \omega = \psi,\, \eta_e=1 \forall e\in A) \leq (1-\epsilon)^{|A|}\, \FKlaw_{b}(\psi).
\end{equation}
Indeed, writing \(D_i=\{\eta_{e_i}=1\}\) if \(e_i\in A\) and \(D_i=\{\eta_{e_i}\in\{0,1\}\}\) otherwise and setting \(D_{E_i}=\bigcap_{j\leq i} D_j\), we get
\begin{align*}
\frac{\coupling( \omega = \psi,\, \eta_e=1 \forall e\in A)}{\coupling(\omega = \psi)} &\leq \prod_{i=1}^{\abs{E_{\Lambda}}}\frac{\coupling( \omega_{e_i} = \psi_{e_i},\, D_i \given \omega_{E_{i-1}} =\psi_{E_{i-1}},\, D_{E_{i-1}})}{\coupling(\omega_{e_i} = \psi_{e_i} \given \omega_{E_{i-1}} =\psi_{E_{i-1}})}\\
&\leq
\prod_{i:\, e_i\in A} \coupling( \eta_{e_i}=1 \given \omega_{e_i} = 1,\, \omega_{E_{i-1}}=\psi_{E_{i-1}},\, D_{E_{i-1}})\\
&\leq
(1-\epsilon)^{|A|}.
\end{align*}

\bibliographystyle{plain}
\bibliography{PottsInterfaceWithWallFull}

\end{document}